\numberwithin{equation}{section} 
\DeclareMathOperator{\sheafhom}{\mathscr{H}\text{\kern -3pt {\calligra\large om}}\,}
\newcommand{\Bunpar}{\Bun_{2,D}}
\newcommand{\Bunpard}[1]{\Bun_{2,D}^{#1}}
\renewcommand{\Bunrel}[1]{\Bunpar^{\rel, #1}}
\renewcommand{\Bunextd}[1]{\Bunpar^{\mathord{+},#1}} 
\renewcommand{\cohpar}{\Coh_{0,D}^1}
\renewcommand{\barcohpar}{\overline{\Coh}_{0,D}^{1}}
\renewcommand{\heckelocalr}[1]{\heckeglobal_{#1}^{(0,1)}}
\renewcommand{\heckelocall}[1]{\heckeglobal_{#1}^{(1,0)}}
\newcommand{\heckelocalz}[1]{\heckeglobal_{#1}^{0}} 
\newcommand{\Extcusp}{\underline{\Ext}} 
\newcommand{\Picpar}{\Pic_D}
\newcommand{\pEtilde}{\tilde{\cE}^\bullet} 
\newcommand{\cEtilde}{\tilde{\cE}} 
\newcommand{\pEhat}{\hat{\cE}^\bullet} 
\newcommand{\cEhat}{\hat{\cE}} 
\newcommand{\tildeLaumon}{\tilde \cL_E}
\newcommand{\closurepEtilde}{\overline{ \{\pEtilde\}}}
\author{Niels uit de Bos}
\title[An explicit geometric Langlands correspondence]{An explicit geometric Langlands correspondence
for the projective line minus four points}
\begin{document}

\maketitle

\begin{abstract}
  This article deals with
  the tamely ramified geometric Langlands correspondence
  for $\GL_2$ on $\bP_{\bF_q}^1$, where $q$ is a prime power,
  with tame ramification at four distinct points $D = \{\infty, 0,1, t\} \subset \bP^1(\bF_q)$.
  We describe in an explicit way
  (1) the action of the Hecke operators
  on a basis of the cusp forms,
  which consists of $q$ elements
  (\cref{thm:formula-hecke-operators});
  and
  (2) the correspondence
  that assigns to a pure irreducible rank 2 local system $E$ on $\bP^1 \setminus D$
  with unipotent monodromy
  its Hecke eigensheaf $\Aut_E$
  on the moduli space $\Bunpar$ of rank 2 parabolic vector bundles
  (\cref{thm:intro-eigensheaf}).
  We define a canonical embedding $\bP^1 \setminus D \incl \Bunpard 1$
  and show with a new proof
  that $\Aut_E|_{\Bunpard 1}$ is the intermediate extension of $E$.
\end{abstract}


\section{Introduction}
This article proves the main results
from the author's thesis about the tamely ramified geometric Langlands correspondence
for $\GL_2$ on $\bP_{\bF_q}^1$, where $q$ is a prime power,
with tame ramification at four distinct points $D = \{\infty, 0,1, t\} \subset \bP^1(\bF_q)$.
We describe in a completely explicit way
(1) the action of the Hecke operators
on a basis of the cusp forms,
which consists of $q$ elements
(\cref{thm:formula-hecke-operators});
and
(2) the correspondence
that assigns to a pure irreducible rank 2 local system $E$ on $\bP^1 \setminus D$
with unipotent monodromy
its Hecke eigensheaf
(\cref{thm:intro-eigensheaf}).
Roughly speaking,
the correspondence says 
that the local system is its own Hecke eigensheaf.

The calculation of the matrix coefficients for the Hecke operators
was the original motivation for the work in this thesis.
Kontsevich
\cite[Section 0.1]{Kontsevich2009}
provides a formula,
but this formula lacks a proof or explanation
and it is not entirely clear what the terms mean.
Moreover,
Mellit, Golyshev and van Straten
noticed that the published formulas of Kontsevich contain misprints, but they were able
to able to guess a correction term that made
the Hecke operators commute.  They
used this for their computer computations of Hecke eigensheaves.
Lastly, the formula also exhibits interesting symmetries that warrant an explanation.
For example,
the formula is symmetric in
the support of the cusp form,
which is a set of rank 2 vector bundles on $\bP^1$ with a parabolic structure at $D$,
and the locus of the Hecke operator,
which is a point in $\bP^1$
---
two seemingly unrelated objects.
The original aim was to prove and provide the correct formulas
and to explain this symmetry.
This led to the following theorem on the action of the Hecke operators.
Here $\{F_z\}_{z \in \bF_q}$ and $\{F_z^0\}_{z \in \bF_q}$ are specific bases of the cusp forms in degree 1 and 0, respectively,
defined in \cref{defn:cusp-forms-fz}.
For $z \in D$,
we denote by $M_z \colon \bP^1 \isom \bP^1$
the unique M\"obius transformation that preserves $D$
and sends $\infty$ to $z$.

\begin{thm}[Theorem \ref{thm:formula-hecke-operators}]
  Let $z \in \bF_q$ and let $x \in \bP^1$.
  Let $\pT \in \cohpar$ be a parabolic torsion sheaf supported at $x$
  with automorphism group $\Gm$ 
  (automatic if $x \not \in D$)
  and let $\heckelocal x$ be the Hecke operator with respect to $\pT$.
  First suppose $x \neq \infty$.
  Then
  \begin{equation*}
    \heckelocal x F_z^0 = \sum_{y \in \bF_q} \alpha_{z,y}^x F_y
  \end{equation*}
  where
  for all $y \in \bF_q \setminus \{x\}$,
  \begin{equation*}
    \begin{split}
      \alpha_{z,y}^x &=
      \# \left\{
        r \in \bF_q^* :
        z = \frac{(yr - x)((y - 1)(y - t)r - (x - 1)(x - t))}{- (x - y)^2 r}
      \right\} 
      \\
      &\phantom{=}-
      \begin{cases}
        0 & \text{if $x \in D$ and $y \in D$} \\
        1 & \text{if $x \in D$ or $y \in D$, but not both} \\
        2 & \text{otherwise}
      \end{cases}
      \\
      &\phantom{=}-
      \begin{cases}
        q & \text{if $z \in D$ and $y = M_z(x)$} \\
        0 & \text{otherwise}
      \end{cases}
    \end{split}
  \end{equation*}
  and
  \begin{equation*}
    \alpha_{z,x}^x
    = \# \left\{
      r \in \bF_q :
      z = -(yr - 1)((y - 1)(y - t)r - (2y - (1+ t)))
    \right\}
    - q + 1.
  \end{equation*}
  If $x = \infty$,
  then the same holds with
  \begin{equation*}
    \alpha_{z,y}^\infty
    =
    \begin{cases}
      -1 & \text{if $z = y$} \\
      0 & \text{otherwise}
    \end{cases}.
  \end{equation*}
\end{thm}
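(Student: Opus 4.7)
The plan is to unwind the geometric definition of $\heckelocal x$ as a pull-push through the parabolic Hecke correspondence and reduce the computation of the matrix coefficient $\alpha_{z,y}^x$ to an explicit point count of modifications of parabolic bundles on $\bP^1$.

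First I would recall from \cref{defn:cusp-forms-fz} the explicit parabolic bundles in the support of $F_z^0$ and $F_y$. By the defining correspondence of the Hecke operator, $\alpha_{z,y}^x$ equals, up to inverse orders of automorphism groups, the number of isomorphism classes of parabolic modifications $\cE^\bullet \hookrightarrow \cE'^\bullet$ at $x$ of cotype $\pT$ with $\cE^\bullet$ in the support of $F_z^0$ and $\cE'^\bullet$ in the support of $F_y$. The strategy is thus to fix $\cE'^\bullet$ representing $F_y$, parameterize its modifications at $x$, and determine for each resulting $\cE^\bullet$ whether it lies in the $F_z^0$-class.

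Since every rank $2$ vector bundle on $\bP^1$ splits as a sum of line bundles, a modification at $x$ of cotype $\Gm$ amounts to choosing a line in the fiber at $x$, i.e.\ a single parameter $r \in \bP^1$. In explicit coordinates compatible with the parabolic structure at $D$, the requirement that the modified bundle land in the support of $F_z^0$ should cut out exactly the rational equation in $r$ appearing in the theorem. The cross-ratio-like structure of this equation, and in particular the symmetry between $x$ and $y$ highlighted in the introduction, should reflect the action of the M\"obius group preserving $D$ on the Hecke correspondence; the diagonal case $y = x$ is handled by a separate local analysis that yields the polynomial formula for $\alpha_{z,x}^x$.

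The final step is to produce the correction terms. The $-q$ subtraction when $z \in D$ and $y = M_z(x)$ should arise from an $\bA^1$-family of apparent modifications that in fact factor through a degenerate parabolic bundle not contained in the support of $F_z^0$; the fact that this locus is parameterized by the M\"obius transformation $M_z$ is natural from the parabolic structure at $D$. The discrete $\{0,1,2\}$ subtraction and the simplified formula at $x = \infty$ reflect how the enumeration changes when one or both of $x, y$ lies in $D$, where the parabolic flag interacts with the modification. I expect the main obstacle to be the careful identification of which ``phantom'' roots of the equation in $r$ must be removed and which automorphism-group factors apply in each stratum, which will require a Zariski-local analysis of $\Bunpar$ near the degenerate loci together with a consistent treatment of stack automorphisms.
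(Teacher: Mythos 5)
Your overall strategy is the one the paper uses: evaluate $\heckelocal x F_z^0$ at the basis point $T_y^{(1:1)}\pEtilde$ representing $F_y$, enumerate the length~1 lower modifications of that bundle at $x$ with cokernel $\pT$ as a $\bP^1$ of flags $(r:1)$, and recognize the main term as the number of solutions of the explicit rational equation $\pi_0(T_x^{(r:1)}T_y^{(1:1)}\pEtilde)=z$ coming from the degree~2 map computed in \cref{thm:summary-statement-of-modifications-normal-points} and \cref{addendum:formula-to-theorem}. Up to that point the proposal matches the paper.

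The gap is in your account of the correction terms, and it is not cosmetic: your proposed mechanisms cannot produce the numbers $-q$ and $-\{0,1,2\}$. You suggest the $-q$ arises from discarding ``an $\bA^1$-family of apparent modifications that factor through a degenerate parabolic bundle not contained in the support of $F_z^0$''; but modifications landing outside the support contribute $0$ to the sum, so discarding them subtracts nothing. The actual source is that $F_z$ is \emph{not} the indicator function of $\pi_1^{-1}(z)$: the cusp conditions (\cref{thm:cusp-condition-fq-pts-necessary-sufficient}, conditions (2.1)--(2.3); see \cref{rmk:properties-of-basis-cusp-form}) force $F_z(T_\infty^{(1:1)}\pEtilde)=-1$ on the fiber over $\infty$, and, when $z\in D$, force $F_z(T_z\pEtilde)=-(q-1)$ at the $\Gm\times\Gm$-point $\alpha(k_z^0)$. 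The $-q$ term is the discrepancy $-(q-1)-1$ at the ramification point of the degree~2 map (which hits the $\Gm\times\Gm$-point exactly when $y=M_z(x)$, by part (3) of \cref{thm:summary-statement-of-modifications-normal-points}), and the $-\{0,1,2\}$ term records the flags $(1:0)$ and $(0:1)$, which land in $\pi_0^{-1}(\infty)$ where $F_z^0$ takes the value $-1$ (their number depending on whether $x$ or $y$ lies in $D$). Without first establishing the full list of values of $F_z$ on $\pi_1^{-1}(\infty)$ and on the $\Gm\times\Gm$-points, the bookkeeping you describe terminates after the first term and the formula cannot be completed.
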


In the course of proving this theorem,
we reached a better understanding of this symmetry,
which we will explain shortly.
This in turn led to a new way to prove
that the Hecke eigensheaf associated to $E$
is the intermediate extension of $E$,
which can canonically be considered as a local system on an open substack $\Bunrel 1$
of the moduli space $\Bunpar$ of rank 2 vector bundles with parabolic structure at $D$.
(\cref{thm:intro-eigensheaf}).
(A proof of the Langlands correspondence for rank 2 local systems with unipotent monodromy appears in
\cite{Drinfeld1987}.)
Here $\Bunpard 1 \subset \Bunpar$ is the substack of parabolic bundles
with underlying bundle of degree 1
and $j \colon \bP^1 \setminus D \incl \bP^1$ denotes the inclusion.

\begin{thm}
  \label{thm:intro-eigensheaf}
  There exists a canonical open embedding
  \begin{equation*}
    j^\rel \colon \cohpar \incl \Bunpar^1
  \end{equation*}
  such that for any pure irreducible rank 2 local system $E$ on $\bP^1 \setminus D$ with unipotent monodromy,
  the Hecke eigensheaf $\Aut_E$ on $\Bunpar$ associated to $E$ satisfies
  \begin{equation*}
    \Aut_E|_{\Bunpar^1} = j^\rel_{!} j_{!*} E.
  \end{equation*}
\end{thm}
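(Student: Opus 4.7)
I would prove the theorem in three steps: (i) construct the open embedding $j^\rel$; (ii) identify $\Aut_E$ restricted to the image $j^\rel(\cohpar)$ with the intermediate extension $j_{!*}E$; and (iii) show $\Aut_E$ vanishes on the complement $\Bunpard{1} \setminus j^\rel(\cohpar)$. For (i), the plan is to send a parabolic torsion sheaf $\pT \in \cohpar$ supported at a point $x \in \bP^1$ to the parabolic rank 2 bundle obtained as a canonical non-split extension of the degree-$1$ parabolic line bundle attached to $\pT$ by the trivial line bundle; more naturally in families, $j^\rel(\pT)$ is the Hecke modification of the trivial parabolic bundle $\mathcal{O}_{\bP^1}^{\oplus 2}$ along $\pT$ lying in the stable locus of $\Bunpard{1}$. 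I would verify that the image is the open substack of parabolic bundles whose underlying bundle has Harder--Narasimhan type $\mathcal{O}(1) \oplus \mathcal{O}$ with suitable genericity of the parabolic structure at $D$, and that $j^\rel$ is an immersion via rigidity of the extension together with a comparison of automorphism groups.

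For (ii), the Hecke eigensheaf property of $\Aut_E$ provides, for each $x \in \bP^1 \setminus D$, a canonical isomorphism
\[
  \mathrm{Hecke}_x(\Aut_E) \cong E_x \otimes \Aut_E,
\]
where $\mathrm{Hecke}_x$ uses a length-$1$ torsion sheaf at $x$. Applied to the trivial parabolic bundle in $\Bunpard{0}$ and letting $x$ vary, this identifies the pullback $(j^\rel)^* \Aut_E$ over the open locus $\bP^1 \setminus D \subset \cohpar$ with $E$ (up to an overall shift/twist). Since $\Aut_E$ is pure, irreducible, and perverse, while $\cohpar$ is smooth with $\bP^1 \setminus D$ open and dense inside it, any perverse extension of a simple local system from the open to $\cohpar$ inside $\Aut_E$ is forced to be the intermediate extension $j_{!*}E$.

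For (iii), I would invoke cuspidality of $\Aut_E$, which follows from the irreducibility of $E$: the geometric constant term along the Borel subgroup vanishes. This forces $\Aut_E$ to vanish on the Harder--Narasimhan strata of $\Bunpard{1}$ parameterizing bundles of type $\mathcal{O}(n+1) \oplus \mathcal{O}(-n)$ with $n \geq 1$, and on their parabolic refinements. A stratified induction shows that these strata exhaust the complement of $j^\rel(\cohpar)$ in $\Bunpard{1}$, yielding $\Aut_E|_{\Bunpard{1}} = j^\rel_! j_{!*}E$.

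The main obstacle is the extension across the boundary $D \subset \cohpar$ in step (ii): for $\pT$ supported at a point of $D$, both its parabolic structure and the tame monodromy of $E$ enter the Hecke eigensheaf equation nontrivially, and one must explicitly verify that the restriction of $\Aut_E$ to these boundary strata matches $j_{!*}E$ rather than some other perverse extension. This is precisely where Theorem \ref{thm:formula-hecke-operators} is essential: the symmetry it displays between the Hecke locus $x$ and the support $y$ of the cusp form $F_y$ encodes geometrically an exchange of the two legs of the Hecke correspondence over $D$, providing the combinatorial identity needed to match the boundary stratification of $\cohpar$ with the Hecke correspondence over $D$ and to promote the generic identification to one on the whole of $\cohpar$.
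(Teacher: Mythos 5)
Your overall strategy runs in the opposite logical direction from the paper's. You take the eigensheaf $\Aut_E$ as pre-existing (implicitly from Drinfeld), and use cuspidality plus the eigen-property to pin down what it must be; the paper instead \emph{defines} $\Aut_E$ as $j^\rel_!\,\alpha_*\, j_{!*}E[1]$ on the relevant locus (Definition \ref{defn:aut-e}) and then spends Sections \ref{sec:definition-and-perversity-of-hecke-eigensheaf}--\ref{sec:proof-of-hecke-property} proving that this complex is perverse, irreducible, and satisfies the Hecke property -- so existence is part of the output, not an input. Your step (ii) is essentially the sheaf-level upgrade of the paper's own Section \ref{sec:argument-on-function-level}, which the paper presents only as a function-level heuristic. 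If you are willing to import existence, perversity, irreducibility and sheaf-level cuspidality of $\Aut_E$ from elsewhere, your direction is legitimate and shorter, but it proves a weaker statement than the paper does, and each imported property needs a precise citation (in particular, cuspidality of the sheaf, not just of its trace function, and the eigen-property in the strong form involving $j_{!*}E$ on $\barcohpar$ rather than merely over $\bP^1\setminus D$).

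Two concrete gaps. First, your construction of $j^\rel$ via modifications of ``the trivial parabolic bundle $\cO^{\oplus 2}$'' does not work unless the base point is chosen to be a canonically split bundle such as $\pEtilde = (\cO(2),\emptyset)\oplus(\cO,D)$ (up to elementary Hecke shifts). The isomorphism $\alpha\colon\cohpar\isom\Bunrel 1$ rests on $\Aut(\pEtilde)\cong\Gm\times\Gm$ acting on the modifications with a single open orbit for each $\pT$ (Lemmas \ref{lem:chcerel-to-bunrel1-isom} and \ref{lem:prel-is-isomorphism}); for a bundle in the relevant locus with only $\Gm$ automorphisms, the map from length-one modifications to the coarse space of $\Bunrel{}$ is \emph{degree two} (Theorem \ref{thm:summary-statement-of-modifications-normal-points}), so a generic choice of flags on $\cO^{\oplus 2}$ gives a $2\!:\!1$ cover rather than an open embedding. (There is also a degree bookkeeping slip: a lower modification of a degree-$0$ bundle lands in degree $-1$, not $1$.) Second, your proposed resolution of the boundary problem over $D$ via Theorem \ref{thm:formula-hecke-operators} cannot work as stated: that theorem is an identity of functions on $\bF_q$-points and does not by itself distinguish perverse extensions across $D$ with the same trace functions. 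The correct resolution in your framework is simply irreducibility: an irreducible perverse sheaf on $\Bunpar^1$ that vanishes outside the open substack $\Bunrel 1$ and restricts to $\alpha_*E$ (up to shift) on the dense open $\alpha(\Supp^{-1}(\bP^1\setminus D))$ is forced to be $j^\rel_!\,\alpha_*\,j_{!*}E[1]$, with no further boundary analysis needed -- but then the burden shifts entirely to justifying irreducibility and the sheaf-level vanishing of step (iii), for which your HN-stratification sketch would need to be carried out at the level of the geometric cusp condition \eqref{eq-defn:cusp-condition-on-sheaves} rather than equation \eqref{eq:recall-cups-condition-fq-pts}.
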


This article is based on the author's thesis
\cite{mythesis},
where additional details may be found.

\subsection{Overview of the contents}

\cref{sec:recollections-on-gometric-objects}
provides some recollections on the geometric objects that are central in this article:
parabolic coherent sheaves
(\cref{sec:parabolic-coherent-sheaves}),
their moduli spaces
(\cref{sec:moduli-spaces-of-parabolic-sheaves})
and modifications
(\cref{sec:modifications-of-parabolic-sheaves});
and the 
the Hecke stack
(\cref{sec:geometric-hecke-ops}),
which is defined in terms of parabolic coherent sheaves
and is used to define (also in \cref{sec:geometric-hecke-ops})
the geometric Hecke operators and eigensheaves.

In \cref{sec:cusp-forms-and-relevant-locus},
we define the geometric cusp conditions
and the \emph{relevant locus}
(\cref{def:relevant-locus}),
which is the open substack
on which the cusp forms are supported.
The next section,
\cref{sec:identifying-cohpar-with-bunrel},
shows
that the connected component of
this relevant locus in degree 1,
and therefore in every odd degree,
is canonically isomorphic to $\cohpar$,
the moduli stack of degree 1 torsion sheaves on $\bP^1$
with parabolic structure at $D$.
\cref{sec:cusp-forms-complete-characterisation}
then concludes our analysis of the cusp conditions
by providing a complete characterisation
of the cusp forms.

\Crefrange{sec:calculation-of-all-length-1-lower-modifications}{sec:determining-hecke-operators}
are the core computational sections of this article.
Working on $\bF_q$-points,
we first determine all length 1 lower modifications 
of parabolic bundles in the relevant locus
(\cref{sec:calculation-of-all-length-1-lower-modifications})
and then use this to provide a formula for the action of the Hecke operators
on the cusp forms
(\cref{sec:determining-hecke-operators}).
In \cref{sec:argument-on-function-level},
we show on the level of $\bF_q$-points that the local system is the Hecke eigensheaf:
we prove that the trace-of-Frobenius function of the local system
is in fact the trace-of-Frobenius function of the Hecke eigensheaf
associated to that local system
with a very short and simple calculation.

In the last sections,
\crefrange{sec:cohomological-properties-of-e}{sec:proof-of-hecke-property},
we construct for every pure irreducible rank 2 local system $E$ on $\bP^1 \setminus D$
with unipotent monodromy
the associated Hecke eigensheaf $\Aut_E$.
\Cref{sec:definition-and-perversity-of-hecke-eigensheaf}
gives the construction and proves that it is perverse.
The remaining sections prove that this is indeed the Hecke eigensheaf associated to $E$:
\cref{sec:decomposition-of-hecke-transform-and-compactification}
shows that the Hecke transform of $\Aut_E$ decomposes as a direct sum of shifted perverse sheaves;
in \cref{sec:hecke-transform-is-intermediate-extension},
we prove that this Hecke transform is the intermediate extension of its restriction
to the open substack that lies over $\bP^1 \setminus D$;
and finally, in \cref{sec:proof-of-hecke-property},
we complete the proof by showing that the Hecke transform is symmetric.

\section{Recollections on the geometric objects}
\label{sec:recollections-on-gometric-objects}

In this section,
we introduce the geometric objects
that play a central role in this ramified geometric Langlands correspondence.

\subsection{Parabolic coherent sheaves}
\label{sec:parabolic-coherent-sheaves}
The tame ramification at $D$ is reflected in the additional data of a \defn{parabolic structure}
on the vector bundles in the moduli space on the automorphic side of the correspondence.
By a \defn{parabolic coherent sheaf} (sometimes simply: \defn{parabolic sheaf}), we will in this article
mean the datum
\begin{equation*}
  (\cF^{(i,x)}, \phi_{(i,x)})_{i \in \bZ, x \in D}
\end{equation*}
of coherent sheaves $\cF^{(i,x)}$ on $\bP^1$
and maps $\phi_{(i,x)} \colon \cF^{(i,x)} \to \cF^{(i + 1, x)}$
such that
\begin{enumerate}
\item for all $x, y \in D$, $\cF^{(0,x)} = \cF^{(0,y)}$;
\item
  for all $i \in \bZ$ and all $x \in D$,
  $\cF^{(i + 2, x)} = \cF^{(i,x)}(x)$;
\item
 the composition
 \begin{equation*}
   \cF^{(i, x)} \xrightarrow{\phi_{(i,x)}}
   \cF^{(i + 1, x)} \xrightarrow{\phi_{(i + 1,x)}}
   \cF^{(i + 2, x)}
 \end{equation*}
 is the map induced by the inclusion $\cO \incl \cO(x)$;
 and lastly
\item
  for all $i \in \bZ$ and $x \in D$,
  $\phi_{(i + 2,x)} = \phi_{(i,x)} \otimes \id_{\cO(x)}$.
\end{enumerate}
We refer to $\cF := \cF^{(0,x)}$ for any $x \in D$ as the underlying coherent sheaf.
Each $(i,x) \in \bZ \times D$ is referred to as a \defn{parabolic degree}.
The degree of a parabolic bundle is defined to be the degree of the underlying sheaf.
By a \defn{parabolic vector bundle} (sometimes: \defn{parabolic bundle}),
we mean a parabolic coherent sheaf $\pF$ as above,
such that $\cF^{(i,x)}$ is a vector bundle for all parabolic degrees $(i,x)$.

A map of parabolic sheaves $\pF \to \pG$
is defined as a collection of chain maps
$f_{(\bullet, x)} \colon \cF^{(\bullet, x)} \to \cG^{(\bullet, x)}$ for every $x \in D$,
such that the map $f_{(i + 2, x)}$ is identified with the map $f_{(i, x)} \otimes \id_{\cO(x)}$
for all parabolic degrees $(i,x)$.

It is straightforward to generalize these definitions,
for example to sheaves on other curves with different divisors;
see for example
\cite[section 2.2]{heinloth2004}.
We will only use parabolic sheaves of the type just described,
and families of such sheaves.

A sequence of parabolic sheaves is defined to be \defn{exact}
if it is an exact sequences of sheaves in every parabolic degree.
Many
constructions from homological algebra can be carried over from sheaves
to parabolic sheaves by doing them in every parabolic degree.

As an example of parabolic sheaves,
consider \defn{parabolic line bundles},
which we define as parabolic sheaves $\pL$
such that $\cL^{(i,x)}$ is a line bundle for every parabolic degree $(i,x)$.
It follows directly from the definitions that for every $(i,x)$,
$\cL^{(i,x)}$ is either $\cL( \floor{\frac i 2} x)$ or $\cL( \ceil{\frac i 2} x)$,
and that $\phi_{(i,x)}$ is either the identity or the natural map induced by
$\cO \incl \cO(x)$.
We introduce the following notation for parabolic line bundles:
for $I \subset D$ a subset and $\cL$ a line bundle on $\bP^1$,
we denote by \index{$(L, I)$@$(\cL, I)$}
\begin{equation*}
  (\cL, I)
\end{equation*}
the parabolic line bundle $\pL$
that satisfies $\cL^{(1,x)} = \cL(x)$ for $x \in I$
and $\cL^{(1,x)} = \cL$ for $x \in D \setminus I$.
Every parabolic line bundle is of this form.

\subsection{Moduli spaces of parabolic sheaves}
\label{sec:moduli-spaces-of-parabolic-sheaves}

The main geometric objects in this article are moduli spaces of parabolic sheaves.
We define the moduli stack
\index{$Bun_2$@$\Bunpar$}
\begin{equation*}
  \Bunpar :=
  \stackbuilder{\pE}
  {\text{$\cE$ has rank 2 and for all $i \in \bZ$, $x \in D$} \\
    \deg \cE^{(i,x)} = \deg \cE + i
    }.
\end{equation*}
For $d \in \bZ$,
we denote by $\Bunpard d \subset \Bunpar$ the connected component
\index{$Bun_2d$@$\Bunpard d$}
classifying rank 2 parabolic vector bundles of degree $d$.

Let $\cE$ be a rank 2 vector bundle on $\bP^1$.
A \defn{flag} of $\cE$ at a point $y \in \bP^1$
is a one-dimensional linear subspace $\ell$
of the fiber $\cE|_y$ of $\cE$ at $y$.
A parabolic structure $\pE$ on $\cE$ that lies in $\Bunpar$,
i.e.,
a parabolic sheaf $\pE$ with underlying sheaf $\cE$
satisfying $\deg \cE^{(i,x)} = \deg \cE + i$ for all $i \in \bZ$, $x \in D$,
is the same as the datum of a flag $\ell_x$ on $\cE$ at $x$
for each $x \in D$.
Indeed,
given a parabolic structure,
we can define $\ell_x$ as the image of $\cE^{(-1,x)}|_x$ in $\cE|_x$.
Conversely,
given a flag $\ell_x \subset \cE|_x$,
we can define $\cE^{(-1,x)}$ as the kernel of the map
$\cE \to \cE|_x/\ell_x$
and $\phi_{(-1,x)}$ as the inclusion,
which completely determines the parabolic structure.

Let $\cE$ be a rank 2 vector bundle on $\bP^1$
and for each $x \in D$, let $\ell_x \subset \cE|_x$ be a flag.
We denote by \index{$(E, lx)$@$(\cE, (\ell_x)_{x \in D})$}
\begin{equation*}
  (\cE, (\ell_x)_{x \in D})
\end{equation*}
the parabolic vector bundle $\pE \in \Bunpar$
with underlying vector bundle $\cE$
and parabolic structure given by the flags $(\ell_x)_{x \in D}$
as explained above.

Another crucial moduli stack is the stack
\index{$Coh$@$\cohpar$}
\begin{equation*}
  \cohpar :=
  \stackbuilder{\pT}
  {
    \text{for all $i \in \bZ$, $x \in D$,} \\
    \text{$\cT^{(i,x)}$ has rank 0 and degree 1}
  }.
\end{equation*}
Note that for $\pT \in \cohpar$,
the torsion sheaves $\cT^{(i,x)}$ in all parabolic degrees
are supported at the same point $p \in \bP^1$.
In fact, if $p := \Supp \cF$ does not lie in $D$,
then $\pF$ is the skyscraper sheaf of length 1 supported at $p$
(the parabolic structure is trivial in this case)
and if $p$ does lie in $D$,
then it is isomorphic to one of of the following parabolic torsion sheaves,
where $k_p$ denotes the skyscraper sheaf of length 1 supported at $p$
and the first $k_p$ to be displayed is in parabolic degree zero:
\index{$k_p^0$}
\index{$k_p^{(1,0)}$}
\index{$k_p^{(0,1)}$}
\begin{align*}
  k_p^0 &:= ( \mathellipsis \xrightarrow{0} k_p \xrightarrow{0} k_p \xrightarrow{0} k_p \xrightarrow{0} \mathellipsis ), \\
  k_p^{(1,0)} &:= ( \mathellipsis \xrightarrow{0} k_p \xrightarrow{1} k_p \xrightarrow{0} k_p \xrightarrow{1} \mathellipsis ), \; \text{or} \\
  k_p^{(0,1)} &:= ( \mathellipsis \xrightarrow{1} k_p \xrightarrow{0} k_p \xrightarrow{1} k_p \xrightarrow{0} \mathellipsis ).
\end{align*}
The sheaves $k_p^0$ with $p \in D$ are the only torsion sheaves $\pT \in \cohpar$ have $\Gm \times \Gm$ as their automorphism group;
the others have $\Gm$ as their automorphism group.
The support map
\index{$Supp$@$\Supp$}
\begin{equation*}
  \Supp \colon \cohpar \to \bP^1
\end{equation*}
that sends a parabolic torsion sheaf to its support,
is the universal map to the coarse moduli space.
The preimage of a point $p \in D$ under the support map
is isomorphic to $\BGm \times [\{(x,y) \in \bA^2 : xy = 0\} / \Gm]$,
where $\Gm$ acts anti-diagonally.
In fact, it is not difficult to see that $\cohpar$ is locally around a point $p \in D$
isomorphic to $\BGm \times [\bA^2/\Gm]$.
(See \cite[lemma 3.6]{heinloth2004} for details.)

We define $\barcohpar$ as the rigidification \index{$Cohparbar$@$\barcohpar$}
(in the sense of
\cite{acv}) 
of $\cohpar$ with respect to the central automorphisms $\Gm$ that scale the sheaf in every parabolic degree
by the same scalar.
This stack has a more concrete description as the stack classifying pairs
$(\pT, s)$ with $\pT \in \cohpar$ and $s \in \rH^0(\bP^1, \cT) \setminus \{0\}$.
The map $\Supp \colon \cohpar \to \bP^1$ factors through
$\Supp \colon \barcohpar \to \bP^1$.
This latter map is an isomorphism over $\bP^1 \setminus D$,
so that we get an inclusion
\begin{equation}
  \label{eq:inclusion-p-setminus-d-into-barcohpar}
  j \colon \bP^1 \setminus D \incl \barcohpar.
\end{equation}

\subsection{Modifications of parabolic sheaves}
\label{sec:modifications-of-parabolic-sheaves}
A \defn{length one lower modification} of a coherent sheaf $\cF$ on $\bP^1$
at a point $x \in \bP^1$
is a subsheaf $\cF' \subset \cF$
such that the cokernel is supported at $x$ and has length 1.
Similarly, a \defn{length one upper modification} of $\cF$
is an inclusion $\cF \subset \cF''$
such that the cokernel satisfies the same properties.

This definition can easily be extended to parabolic sheaves:
a length one lower modification of a parabolic sheaf $\pF$
is a parabolic subsheaf $\pG \subset \pF$
such that in every parabolic degree $(i,x)$,
the inclusion $\cG^{(i,x)} \subset \cF^{(i,x)}$
is a length 1 lower modification of coherent sheaves.
The cokernel $\pT$ of such a modification lies in $\cohpar$.

The modifications $\pE \subset \pF$ of a parabolic bundle $\pF$ at a point $x$ outside of $D$
are classified by the flags in $\cF|_x$:
the flag corresponding to $\pE \subset \pF$
is the image of $\cE|_x$ in $\cF|_x$.
We denote the modification corresponding to $\ell \subset \cE|_x$
by $T_x^\ell \pE$.
Similarly,
modifications $\pE \subset \pF$ of a parabolic bundle $\pF$ at a point $x \in D$
are classified by a pair of flags
$(\ell', \ell) \in \bP^1(\cF^{(-1,x)}|_x) \times \bP^1(\cF|_x)$
such that at least one of the following two conditions hold:
(1) $\ell$ is the image of $\cF^{(-1,x)}|_x$ in $\cF|_x$;
or
(2) $\ell'$ is the image of $\cF^{(-2,x)}|_x$ in $\cF^{(-1,x)}|_x$.
The isomorphism class of the cokernel of the modification is determined by which of these conditions hold;
e.g., the cokernel is isomorphic to $k_x^0$ if and only if both conditions hold.
We denote the modification corresponding to 
$(\ell', \ell)$
by $T_x^\ell \pE$ if only condition (2) holds,\index{$T_x^\ell$}
by $\leftmodif{\ell'}{\pE}$ if only condition (1) holds,\index{$T$@$\leftmodif{\ell'}{}$}
and by $T_x \pE$\index{$T_x$} if both conditions hold.

\subsection{Geometric Hecke operators and Hecke eigensheaves}
\label{sec:geometric-hecke-ops}
The geometric Hecke operators are defined in terms of correspondences
involving the \defn{Hecke stack}\index{$H$@$\cH$|see{Hecke stack}} $\cH$ of length 1,
which is defined as
\begin{equation*}
  \cH :=
  \stackbuilder{0 \to \pF \to \pE \to \pT \to 0 \text{ exact}}
  {\pF, \pE \in \Bunpar, \; \pT \in \cohpar}.
\end{equation*}
The Hecke correspondence is the diagram
\begin{equation}
  \label{eq:hecke-diagram-plain}
  \begin{tikzcd}
   & \cH \ar[ld, "p", swap] \ar[rd, "q"] & \\
    \Bunpar \times \barcohpar && \Bunpar
  \end{tikzcd}
\end{equation}
where the maps $p,q$
are defined by
\begin{equation}
  \label{eq:defn-p-q-from-hecke-stack}
  \begin{split}
    p(0 \to \cE' \to \cE \to \cT \to 0) &= (\cE, [\cT]) \\
    q(0 \to \cE' \to \cE \to \cT \to 0) &= \cE'
  \end{split}.
\end{equation}
The \defn{global Hecke operator}
is the map $\heckeglobal$
on the bounded derived categories of $\ell$-adic sheaves on $\Bunpar$ and $\Bunpar \times \cohpar$
defined as
\begin{equation*}
  \heckeglobal \colon \boundedderivcat(\Bunpar, \bQ_\ell) \to \boundedderivcat(\Bunpar \times \barcohpar, \bQ_\ell),
  \qquad
  F \mapsto p_! q^* F.
\end{equation*}
A \defn{Hecke eigensheaf} for an irreducible local system $E$ on $\bP^1 \setminus D$
is a perverse sheaf $F$ on $\Bunpar$ such that there exists an isomorphism
\begin{equation*}
  \heckeglobal F \cong F \boxtimes j_{!*} E
\end{equation*}
where $j \colon \bP^1 \setminus D \incl \barcohpar$ is the inclusion
defined in \eqref{eq:inclusion-p-setminus-d-into-barcohpar}.

The maps $p$ and $q$ are both smooth.
The map $q$ is proper,
while $p$ is proper only over $\Bunpar \times (\barcohpar \setminus \Supp^{-1}(D))$.
(We will later compactify the map $p$.)
See \cite[remark 6.3]{heinloth2004}
or  \cite[lemma 4.12]{mythesis}
for proofs of these facts.
(Note that these proofs use slightly different definitions of $p$ and $q$,
but duality of vector bundles carries over that definition into the definition given here.)

The \defn{local Hecke operators}
can be defined as restrictions of the global Hecke operators:
for $\pT \in \barcohpar(\bF_q)$,
the local Hecke operator
\begin{equation*}
  \heckelocal{\pT} \colon \boundedderivcat(\Bunpar, \bQ_\ell) \to \boundedderivcat(\Bunpar, \bQ_\ell),
\end{equation*}
is defined as $(\id \times \pT)^* \circ \heckeglobal$.
For $x \in D \subset \bP^1(\bF_q)$,
we write
\begin{equation*}
  \heckelocal x := \heckelocal{k_x^{0}},
  \qquad
  \heckelocalr x := \heckelocal{k_x^{ (0,1)}},
  \quad \text{and} \quad
  \heckelocall x := \heckelocal{k_x^{(1,0)}}
\end{equation*}
where the $k_x^\bullet$ represent the three isomorphism classes of degree 1 parabolic torsion sheaves
supported on $x$ (see \cref{sec:moduli-spaces-of-parabolic-sheaves}).
If $\pT \in \barcohpar(k)$ is supported at $x \in \bP^1(x) \setminus D$,
then we also write $\heckelocal{x} := \heckelocal{\pT}$.

Lastly, we define \defn{elementary Hecke operators}.
For every $x \in D$,
the elementary Hecke operator $T_x$ at $x$\index{$T_x$}
is the isomorphism
\index{$T_x$}
\begin{equation*}
  T_x \colon \Bunpar \isom \Bunpar
\end{equation*}
obtained by shifting the parabolic structure of $\pE \in \Bunpar$ at $x$:
we map
$\pE$ to the parabolic bundle $T_x \pE$ with underlying sheaf $\cE^{(-1,x)}$
and parabolic structure induced by the parabolic structure of $\pE$.
(We obtain chains $(T_x \pE)^{(\bullet, y)}$ for $y \in D \setminus \{x\}$
by noting that the $\phi_{(i,x)}$ in the definition of parabolic coherent sheaves
are isomorphisms outside of $x$,
and then gluing.)
It follows directly from the definitions that for all $x \in D$,
the local Hecke operator
$\heckelocalz x$ is the same as pulling back by $T_x$.
Elementary Hecke operators at different points in $D$ commute
and for a subset $I \subset D$,
we denote by $T_I$ the composition of the elementary Hecke operators $T_x$ with $x \in I$.
We refer to the sheaf  $(T_I^{-i} \pE)^0$ as the underlying sheaf in parabolic degree \index{parabolic degree} $(i, D)$.

Let $x \in D$ and let $\pE$ be a parabolic vector bundle.
Every length one lower modification of $\pE$ with cokernel $k_x^0$
is of the form $T_x \pE \incl \pE$.
The definition of $T_x$ therefore does not conflict with the notation introduced in the previous section.

\section{Cusp forms and the relevant locus}
\label{sec:cusp-forms-and-relevant-locus}

\subsection{Definition of the cusp condition}
\label{sec:definition-of-cusp-condition}
Classically, the cusp condition on an automorphic form is defined in terms of
the vanishing of a certain integral.
The geometric analogue of this condition
is defined in terms of a correspondence
\begin{equation}
  \begin{tikzcd}
    & \Extcusp
    \ar[ld, "s", swap]
    \ar[rd, "t"] & \\
    \Bunpar & &
    \Picpar \times \Picpar
  \end{tikzcd}
\end{equation}
where $\Picpar$ denotes the moduli stack that classifies parabolic line bundles
and $\Extcusp$ denotes the moduli stack that classifies short exact sequences
\begin{equation}
  \label{eq:exension-of-parabolic-line-bundles}
  0 \to \pL \to \pE \to \pM \to 0
\end{equation}
with $\pL, \pM \in \Picpar$ and $\pE \in \Bunpar$.
The map $s$ sends an extension
as in \cref{eq:exension-of-parabolic-line-bundles} to $\pE$,
while $t$ sends it to $(\pL, \pM)$.

Note that $\Pic_D = \sqcup_{I \subset D} \Pic$,
where $\Pic$ denotes the stack of (non-parabolic) line bundles on $\bP^1$,
since every parabolic line bundle is of the form $(\cL, I)$ for some $\cL \in \Pic$ and $I \subset D$.
Note also, that for an extension as in \cref{eq:exension-of-parabolic-line-bundles},
the condition on the degrees of $\pE \in \Bunpar$
(namely,
$\deg \cE^{(i,x)} = \deg \cE + i$)
implies that there exists $I \subset D$
such that $\pL = (\cL, I)$ and $\pM = (\cM, D \setminus I)$.

We say that a perverse sheaf $F$ on $\Bunpar$ satisfies the \defn{cusp condition},
if
\begin{equation}
  \label{eq-defn:cusp-condition-on-sheaves}
  \bR t_! s^* F = 0.
\end{equation}

Let $f \colon \Bunpar(\bF_q) \to \bQ_\ell$ be the trace-of-Frobenius function
(see e.g. \cite[(1.1)]{Laumon1987})
associated to a perverse sheaf $F$ on $\Bunpar$.
If $F$ satisfies the cusp condition,
then for all line bundles $\cL, \cM$ on $\bP_{\bF_q}^1$ and all $I \subset D$,
$f$ satisfies the equality
\begin{equation}
  \label{eq:recall-cups-condition-fq-pts}
  \sum_{\substack{(\pL \incl \pE \surj \pM)\\ \in \Ext^1(\pM, \pL)(\bF_q)}} f(\pE) = 0
\end{equation}
where $\pL = (\cL, I)$ and $\pM = (\cM, D \setminus I)$.

\subsection{The relevant locus}
\label{sec:relevant-locus}
We now determine the subspace of $\Bunpar$
on which the cusp forms are supported,
which we call the relevant locus.

\begin{defin}
  \label{def:relevant-locus}
  We define the \defn{relevant locus} of $\Bunpard 1$
  as the open substack
  \index{$Bunreld$@$\Bunrel{d}$}
  \begin{equation*}
    \Bunrel 1 \subset \Bunpard 1
  \end{equation*}
  that classifies parabolic bundles $\pE \cong (\cO(1) \oplus \cO, (\ell_x)_{x \in D})$
  satisfying
  \begin{enumerate}
  \item  $\ell_x = \cO(1)|_x$ for at most one $x \in D$; and
  \item
    the flags do not come from a global section $\sigma \colon \cO \to \cO(1)$,
    i.e.,
    there is no $\sigma \colon \cO \to \cO(1)$ such that for all $x \in D$, $\ell_x = (\sigma|_x : 1) \in \bP^1(\cO(1)|_x \oplus \cO|_x)$.
  \end{enumerate}
  Note that bundles of the form $(\cO(1) \oplus \cO, (\ell_x)_{x \in D})$ that do not satisfy the second condition
  are isomorphic to $(\cO(1), \emptyset) \oplus (\cO, D)$.

  The relevant locus $\Bunrel d$ of $\Bunpar^d$
  is defined as the image of $\Bunrel 1$ under the isomorphism
  \begin{equation*}
    (T_\infty)^{1 - d} \colon \Bunpar^1 \isom \Bunpar^d
  \end{equation*}
  where $T_\infty$ is the elementary Hecke operator at $\infty$ (see \cref{sec:geometric-hecke-ops}).
\end{defin}

\begin{thm}
  \label{prop:cusp-forms-vanish-outside-relevant-locus}
  Let $d \in \bZ$, $n \in \bZ_{\geq 1}$ and let $f \colon \Bunpar^d(\bF_{q^n}) \to \bQ_\ell$ be a function that
  satisfies the cusp conditions.
  Then $f$ vanishes on $\Bunpar^d(\bF_{q^n}) \setminus \Bunrel d(\bF_{q^n})$.
\end{thm}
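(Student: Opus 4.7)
The plan is to apply the cusp condition to carefully chosen pairs $(\pL, \pM)$ of parabolic line bundles, using the vanishing or explicit structure of parabolic $\Ext^1$ groups to isolate $f(\pE)$.

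\textbf{Reduction to $d = 1$.} The elementary Hecke operator $T_\infty$ is an isomorphism $\Bunpar^{d-1} \isom \Bunpar^d$ that transports parabolic short exact sequences to parabolic short exact sequences (re-indexing the subset $I \subset D$ appearing in $(\cL, I)$), so pullback by $T_\infty$ sends cuspidal functions to cuspidal functions. Since $\Bunrel d$ is defined as the image of $\Bunrel 1$ under iterated $T_\infty$, it suffices to treat $d = 1$.

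\textbf{Classification and canonical subbundle.} Fix $\pE \in \Bunpar^1(\bF_{q^n}) \setminus \Bunrel 1(\bF_{q^n})$. By Grothendieck, $\cE \cong \cO(a) \oplus \cO(1-a)$ for a unique integer $a \geq 1$, and the failure to lie in $\Bunrel 1$ falls into one of three cases: (A) $a \geq 2$; (B) $a = 1$ and at least two flags satisfy $\ell_x = \cO(1)|_x$; or (C) $a = 1$ and the flags come from a global section, equivalently $\pE \cong (\cO(1), \emptyset) \oplus (\cO, D)$. In each case, equip the maximal-degree summand $\cO(a) \subset \cE$ with the parabolic structure induced from $\pE$, giving a canonical parabolic line subbundle $\pL = (\cO(a), I) \subset \pE$ with $I = \{x \in D : \ell_x = \cO(a)|_x\}$ and quotient $\pM = (\cO(1-a), D \setminus I)$.

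\textbf{Applying the cusp condition.} Apply \eqref{eq:recall-cups-condition-fq-pts} to the pair $(\pL, \pM)$:
\begin{equation*}
  \sum_{\eta \in \Ext^1(\pM, \pL)(\bF_{q^n})} f(\pE_\eta) = 0,
\end{equation*}
where $\Ext^1$ denotes parabolic Ext. For cases (A) and (B), a direct computation using the parabolic Hom-sheaf formula $\sheafhom((\cM, J), (\cL, I)) = \sheafhom(\cM, \cL) \otimes \cO(-(J \setminus I))$ shows the parabolic $\Ext^1$ vanishes: here $J \setminus I = D \setminus I$, so the parabolic Hom sheaf is isomorphic to $\cO(2a - 1 - |D \setminus I|)$, whose $\rH^1$ vanishes as soon as $2a - 1 - |D \setminus I| \geq -1$; this holds for $a \geq 2$ (case (A), since $|D \setminus I| \leq 4 \leq 2a$) and for $a = 1$ with $|I| \geq 2$ (case (B), since $|D \setminus I| \leq 2$). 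Hence every parabolic extension of $\pM$ by $\pL$ is split, forcing $\pE \cong \pL \oplus \pM$, and the cusp sum collapses to $f(\pE) = 0$.

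\textbf{Main obstacle.} The remaining case (C) is the genuine difficulty: here $a = 1$ and $I = \emptyset$, so the parabolic Hom sheaf is $\cO(-3)$ and $\Ext^1 \cong \rH^1(\cO(-3))$ is two-dimensional. The cusp sum for this single pair has nontrivial contributions from non-split parabolic extensions $\pE_\eta$ of underlying type $\cO(1) \oplus \cO$, whose flag configurations are induced by $\eta$. The plan is to analyze these flag configurations explicitly, show that every $\pE_\eta$ for $\eta \neq 0$ either lies in $\Bunrel 1$ or is isomorphic to $\pE$, and — crucially — combine the cusp condition for $(\pL, \pM)$ with cusp conditions for other pairs $(\pL', \pM')$ (for instance, those obtained by applying elementary Hecke transforms $T_I$ that swap summands and thereby shift the distinguished subbundle) to produce a linear system whose only solution is $f(\pE) = 0$. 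The delicate bookkeeping is identifying which $\pE_\eta$ are relevant and which are not, and tracking how their $f$-values enter the various cusp relations so that the unknown contributions from $\Bunrel 1$ cancel.
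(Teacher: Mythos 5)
Your reduction to $d=1$ and your treatment of cases (A) and (B) match the paper's argument: you identify the same numerical criterion ($\deg \cL - \deg \cM - \#(D\setminus I) \geq -1$, in your normalization $2a - 1 - |D\setminus I| \geq -1$) for the parabolic $\Ext^1$ to vanish, and correctly conclude $f(\pE)=0$ from the one-term cusp sum. However, your "main obstacle" in case (C) is a genuine gap in the proposal as written --- and it is an obstacle you created by insisting that the parabolic subbundle be supported on the maximal-degree summand $\cO(1)$ of the underlying vector bundle. The paper dispatches case (C) with the same one-line argument as the other cases, simply by choosing the \emph{other} filtration: for $\pE \cong (\cO(1), \emptyset) \oplus (\cO, D)$, take $\pL = (\cO, D)$ as the subobject and $\pM = (\cO(1), \emptyset)$ as the quotient. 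The relevant extension group is then
\begin{equation*}
  \Ext^1\bigl((\cO(1), \emptyset), (\cO, D)\bigr) \cong \Ext^1(\cO(1), \cO) = \rH^1(\bP^1, \cO(-1)) = 0,
\end{equation*}
(this is the case $\cL = \cO$, $\cM = \cO(1)$, $I = \emptyset$ of the same criterion, giving $0 - 1 - 0 = -1 \geq -1$), so the cusp sum again has a single split term and yields $f(\pE) = 0$ directly. The point is that once parabolic weights are taken into account, $(\cO, D)$ --- not $(\cO(1), \emptyset)$ --- is the destabilizing subobject of this bundle, which is exactly why this bundle fails to be relevant.

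Your proposed workaround for case (C) (analyzing the two-dimensional $\Ext^1(\pM,\pL)$ for the wrong pair and combining several cusp relations into a linear system) is not carried out, and there is no evidence in the sketch that the unknown contributions from relevant bundles would actually cancel; as it stands the proof is incomplete at precisely the one bundle that requires an argument. Replacing that paragraph with the filtration swap above completes the proof and makes it uniform across all three cases.
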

The above theorem determines the open substack $\Bunrel d \subset \Bunpar^d$ uniquely.
We will also see that $\Bunrel d = T_x^{1 - d} \Bunrel 1$
for any $x \in D$,
so in the definition of the relevant locus in degree $d \neq 1$,
we could have taken $T_x$ with any $x \in D$ instead of $T_\infty$
\begin{proof}[Proof of the theorem]
  Since the elementary Hecke operators $T_x$ with $x \in D$
  are defined as a shift in the parabolic structure,
  they can naturally be extended to $\Extcusp$ and $\Picpar$
  and these extensions commute with the maps $s$ and $t$.
  It follows that
  $f$ is a cusp form if and only if $f \circ T_x$ is.
  It therefore suffices to prove that any cusp form vanishes outside of $\Bunrel 1$.

  Let $\cL, \cM$ be line bundles on $\bP^1$ and let $I \subset D$.
  Then the natural map
  \begin{equation*}
    \Ext^1( (\cM, I), (\cL, D \setminus I)) \to \Ext^1( \cM(I), \cL)
  \end{equation*}
  that sends an extension of parabolic sheaves
  to its associated extension in parabolic degree $(1, I)$
  (i.e., obtained by applying $T_I^{-1}$ and taking the degree zero underlying sheaf),
  is an isomorphism.
  (The inverse is given by pulling back the extension in $\Ext^1(\cM(I), \cL)$
  along the maps that define the parabolic structure of $T_I^{-1}(\cM, I) = (\cM(I), \emptyset)$,
  with which the maps in all parabolic degrees can be recovered.)

  The fiber of the map
  $\Extcusp \to \Picpar \times \Picpar$ over the point $((\cL, D \setminus I), (\cM, I))$
  is $\Ext^1( (\cM, I), (\cL, D \setminus I)) \cong \Ext^1(\cM(I), \cL)$,
  which contains only the trivial extension
  if and only if
  \begin{equation}
    \label{eq:condition-to-be-semisimple}
    \deg \cL - \deg \cM - \# I \geq -1.
  \end{equation}
  Therefore,
  any $f \colon \Bunpard 1(\bF_q) \to \bQ_\ell$
  satisfying the cusp condition
  vanishes on parabolic bundles of the form
  \begin{equation*}
    (\cL, D \setminus I) \oplus (\cM, I)
  \end{equation*}
  where $\cL, \cM$ and $I$ satisfy \cref{eq:condition-to-be-semisimple}.
  This includes all parabolic bundles $\pE$ with underlying bundle of the form
  $\cE = \cO(1 + m) \oplus \cO(-m)$ for some $m \in \bZ_{> 0}$:
  it has a subbundle of the form $(\cL = \cO(1 + m), D \setminus I)$ for some $I \subset D$;
  the quotient is of the form $(\cM, I)$
  and the inequality \ref{eq:condition-to-be-semisimple} then automatically holds,
  so that $\pE$ is indeed isomorphic to such a $(\cL, D \setminus I) \oplus (\cM, I)$.
  By the same reasoning
  (i.e., taking $(\cL, D \setminus I)$ the maximal destabilizing subbundle),
  the same holds for $\pE = (\cO(1) \oplus \cO, (\ell_x)_{x \in D})$ such that
  $\ell_x = \cO(1)|_x$ for at least 2 $x \in D$
  (which implies $\#I \leq 2$, and therefore the inequality \cref{eq:condition-to-be-semisimple} holds).
  Lastly, it also includes the bundle $(\cO, D) \oplus (\cO(1), \emptyset)$
  (take $I = \emptyset$, $\cL = \cO$ and $\cM = \cO(1)$).
\end{proof}

There are precisely two parabolic bundles $(\cO(1) \oplus \cO, (\ell_x)_{x \in D}) \in \Bunrel 1$
with $\ell_\infty = \cO(1)|_\infty$.
Hence, we find that the relevant locus $\Bunrel 2$ in degree 2,
which we obtain by applying $T_\infty^{-1}$ to the parabolic vector bundles in $\Bunrel 1$,
contains precisely two parabolic bundles with underlying vector bundle
$\cO(2) \oplus \cO$,
both of which play a very important role in the rest of this article.
One of these is
\begin{equation*}
  \pEtilde := (\cO(2), \emptyset) \oplus (\cO, D).
\end{equation*}
This direct sum decomposition is canonical,
because there are no parabolic maps $(\cO, D) \to (\cO(2), \emptyset)$,
and the automorphism group $\Aut(\pEtilde)$ is therefore canonically isomorphic to
the product
$\Aut((\cO(2), \emptyset)) \times \Aut((\cO,D)) = \Gm \times \Gm$.
We denote the other parabolic bundle in $\Bunrel 2$ with underlying vector bundle $\cO(2) \oplus \cO$
by $\pEhat$.
Its automorphism group consists of the central automorphism $\Gm$.
Any bundle $(\cO(2) \oplus \cO, (\ell_p)_{p \in D})$
such that $\ell_p \neq \cO(2)|_p$ holds for all $p \in D$
is isomorphic to either $\pEtilde$
(if the flags come from a global section)
or $\pEhat$ (otherwise). 

Let $\pT \in \cohpar$ with support $x \in \bP^1$.
Recall (\cref{sec:modifications-of-parabolic-sheaves})
that the length 1 lower modifications of $\pEtilde$ with cokernel $\pT$
are classified
by flags $\ell \in \bP^1(\cEtilde)|_x$
(or, if $x \in D$,
a certain subset of
pairs of flags
$(\ell', \ell) \in \bP^1(\cE^{(-1,x)}|_x, \cE^{0}|_x)$).
The automorphism group $\Gm \times \Gm$ acts on these flags
and flags in the same orbit
define subbundles that are isomorphic as parabolic bundles
(i.e., after forgetting the inclusion).
For each $\pT \in \cohpar$,
there is exactly one open orbit of modifications with cokernel $\pT$
--- we call this orbit the \defn{generic orbit}.
We call the subbundle of the modification associated to this generic orbit
the subbundle of the \defn{generic modification}.
\begin{lemma}
  \label{lem:generic-modifications-of-petilde-lie-in-bunrel}
  Every generic length 1 lower modification of $\pEtilde$ lies in $\Bunrel 1$.
\end{lemma}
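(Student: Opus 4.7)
The plan is to verify the two conditions defining $\Bunrel 1$ directly on the parabolic bundle coming from a generic modification $\pE' \subset \pEtilde$, splitting cases according to whether the support $x$ of the cokernel $\pT \in \cohpar$ lies in $D$ or not. A preliminary step common to all cases is to identify the underlying vector bundle of a generic modification as $\cO(1) \oplus \cO$: the only alternative is $\cO(2) \oplus \cO(-1)$, which arises precisely when the modifying flag equals the $\cO$-summand at $x$, a closed $\Gm \times \Gm$-orbit excluded from the generic one.

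For $x \notin D$ the inclusion $\cE' \hookrightarrow \cEtilde$ is an isomorphism on fibers at each $p \in D$, so the parabolic flag of $\pE'$ at $p$ agrees with the flag $\cO|_p$ of $\pEtilde$ under this identification. Fixing a decomposition $\cE' = \cO(1) \oplus \cO$ by choosing a complementary sub-$\cO$ spanned by a section $(s_0, 1)$ with $s_0 \in H^0(\cO(2))$ and $s_0(x)$ determined by $\ell$, a direct computation shows that the flag at $p \in D$ reads $(-s_0(p) : 1)$ in this decomposition. Condition (1) is then automatic because the second coordinate never vanishes; for condition (2), I would argue by contradiction: if $\pE'$ were isomorphic to $(\cO(1), \emptyset) \oplus (\cO, D)$, then after a shift $s_0 \mapsto s_0 + \tau$ with $\tau \in H^0(\cO(2))$ and $\tau(x) = 0$, the new $s_0$ would have to vanish on all four points of $D$, producing a nonzero quadratic on $\bP^1$ vanishing at the five distinct points $D \cup \{x\}$, which is impossible.

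For $x \in D$ I would split further according to the cokernel type. When $\pT = k_x^0$ the modification is unique and equals the elementary Hecke operator $T_x \pEtilde$, which a direct calculation identifies with $(\cO(1) \oplus \cO, (\ell_p))$ having $\ell_x = \cO(1)|_x$ and $\ell_p = \cO|_p$ for $p \in D \setminus \{x\}$; both conditions of $\Bunrel 1$ hold. For $\pT = k_x^{(0,1)}$ or $k_x^{(1,0)}$ exactly one of the two classifying flags is free, and the generic orbit corresponds to that free flag being in general position; tracking the parabolic chain through the modification yields a flag $\cO(1)|_x$ at $x$ in the relevant decomposition of $\cE' = \cO(1) \oplus \cO$ and flags of the form $(-s_0(p) : 1)$ at the other $p \in D$, so conditions (1) and (2) follow from the same analysis as before. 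The main obstacle will be this last step: recovering the parabolic structure of $\pE'$ at $x$ itself from the exact sequence $0 \to \pE' \to \pEtilde \to \pT \to 0$ in each parabolic degree, while keeping careful track of the interaction between the modification at $x$ and the pre-existing parabolic structure of $\pEtilde$ there. Away from $x$ the verification is the clean quadratic-vanishing argument already used in the case $x \notin D$.
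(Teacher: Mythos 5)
Your strategy is sound and, for the identification of the underlying bundle and for condition (1), it matches the paper's proof; the difference is in how condition (2) is handled. You verify it by explicit computation in a chosen splitting $\cE' \cong \cO(2)(-x) \oplus \cO$, which works cleanly for $x \notin D$ via the degree count, but forces you into the case analysis over the cokernel types $k_x^0$, $k_x^{(1,0)}$, $k_x^{(0,1)}$ for $x \in D$ that you correctly identify as the delicate part and do not complete. The paper avoids this entirely by inverting the question: rather than showing each generic modification is not isomorphic to $(\cO(1), \emptyset) \oplus (\cO, D)$, it shows that \emph{every} inclusion $(\cO(1), \emptyset) \oplus (\cO, D) \incl \pEtilde$ lands in a non-open orbit (using that there are no parabolic maps $(\cO, D) \to (\cO(2), \emptyset)$, any such inclusion must send the $(\cO,D)$-summand into the $(\cO,D)$-summand of $\pEtilde$, so the corresponding flag is the torus-fixed point $\cO|_x$). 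This one observation disposes of condition (2) uniformly for all supports $x$, including $x \in D$, and is the argument I would recommend you substitute for your case analysis.

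Two local corrections. First, the degenerate underlying bundle $\cO(2) \oplus \cO(-1)$ arises when the modifying flag is the fiber $\cO(2)|_x$ of the \emph{degree-2} summand, not of the $\cO$-summand (the flag $\cO|_x$ gives $\cO(1) \oplus \cO$ and is precisely the other closed orbit, producing $(\cO(1), \emptyset) \oplus (\cO, D)$); your conclusion survives because both summand fibers are torus-fixed, hence non-generic. Second, in the quadratic-vanishing argument the shifted $s_0$ does \emph{not} vanish at $x$ (you arranged $\tau(x) = 0$ precisely so that $s_0(x)$ stays equal to the nonzero slope of the generic flag), so the contradiction is that a nonzero section of $\cO(2)$ would vanish at the four points of $D$ --- four points already suffice; there is no fifth vanishing point.
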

\begin{proof}
  Let $(\cE, (\ell_x)_{x \in D}) \overset{i}{\incl} \pEtilde$ be a generic length 1 lower modification of $\pEtilde$.
  Then the underlying vector bundle $\cE$ is isomorphic to $\cO(1) \oplus \cO$,
  because the only flags that define a lower modification with a different underlying vector bundle (which is $\cO(2) \oplus \cO(-1)$),
  are the flags that come from $\cO(2) \subset \cEtilde^0$.
  For each $x \in D$,
  the flag $\ell_x$ can only be $\cO(1)|_x$ if the cokernel of $i$ is supported at $x$;
  in particular,
  there is at most one $x$ with $\ell_x = \cO(1)|_x$.
  Lastly, we can prove that $(\cE, (\ell_x)_{x \in D})$ is not isomorphic to $(\cO(1), \emptyset) \oplus (\cO, D)$,
  by showing that any inclusion
  $(\cO(1), \emptyset) \oplus (\cO, D) \incl \pEtilde$
  corresponds to a non-open orbit.
  (For example, use that there are no parabolic maps $(\cO, D) \to (\cO(2), \emptyset)$.)
\end{proof}

\begin{prop}
  \label{prop:relevant-locus-fq-points-explicit-description}
  The map
  \begin{equation*}
    \alpha \colon \cohpar(\bF_q) \to \Bunrel 1(\bF_q)
  \end{equation*}
  that sends a parabolic torsion sheaf $\pT$
  to the generic modification of $\pEtilde$ with respect to $\pT$,
  is a bijection.
\end{prop}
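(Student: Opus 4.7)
The plan is to exhibit the inverse of $\alpha$ by studying, for a fixed $\pE \in \Bunrel 1(\bF_q)$, the space $\Hom(\pE, \pEtilde)$ of parabolic homomorphisms. Specifically, I will show that the injective elements of this space form a single orbit under $\Aut(\pEtilde) = \Gm \times \Gm$. From this, both directions of the bijection follow: surjectivity because every such $\pE$ admits some embedding into $\pEtilde$, and injectivity because the cokernel of such an embedding is then uniquely determined up to isomorphism. Well-definedness of $\alpha$ is already given by \cref{lem:generic-modifications-of-petilde-lie-in-bunrel}.

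Writing $\pE = (\cO(1) \oplus \cO, (\ell_x)_{x \in D})$, a parabolic homomorphism $\pE \to \pEtilde = (\cO(2), \emptyset) \oplus (\cO, D)$ on underlying vector bundles is a matrix $\begin{pmatrix} a & b \\ 0 & d \end{pmatrix}$ with $a \in H^0(\cO(1))$, $b \in H^0(\cO(2))$, and $d \in k$, where the lower-left entry is forced to vanish because $\Hom(\cO(1), \cO) = 0$. Since the flag of $\pEtilde$ at each $x \in D$ is $\{0\} \oplus \cO|_x$, writing $\ell_x = (p_x : q_x)$ the parabolic compatibility at $x$ amounts to the single linear equation $a(x) p_x + b(x) q_x = 0$.

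The main computation is to check that this solution space is 2-dimensional precisely when $\pE$ lies in $\Bunrel 1$. In the \emph{generic} case where $q_x \neq 0$ at all four points of $D$, so $\ell_x = (p_x : 1)$, the equation $b(x) = -a(x) p_x$ is solvable for $b \in H^0(\cO(2))$ iff $(-a(x) p_x)_{x \in D}$ lies in the 3-dimensional image of the evaluation map $H^0(\cO(2)) \to k^D$, which is cut out by a unique (up to scalar) linear relation $\sum_x \lambda_x \bullet = 0$ with $\lambda_x \neq 0$ for every $x \in D$. The resulting linear condition $\sum_x \lambda_x p_x a(x) = 0$ on $a \in H^0(\cO(1))$ is non-trivial precisely when $(p_x)_{x \in D}$ is not of the form $(\sigma(x))_{x \in D}$ for some $\sigma \in H^0(\cO(1))$, i.e., precisely under condition (2) defining $\Bunrel 1$. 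This yields a 1-dimensional space of admissible $a$, a uniquely determined $b$, and a free $d$, giving $\dim \Hom(\pE, \pEtilde) = 2$. In the remaining case, where exactly one $x^* \in D$ has $\ell_{x^*} = \cO(1)|_{x^*}$, the condition at $x^*$ becomes $a(x^*) = 0$, cutting $a$ to dimension 1, and the three remaining conditions uniquely determine $b$ since evaluation at three distinct points is an isomorphism $H^0(\cO(2)) \isom k^3$; again $\dim \Hom(\pE, \pEtilde) = 2$.

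To finish, a parabolic map is injective iff $\det = a \cdot d \neq 0$ as a section of $\cO(1)$, iff $a \neq 0$ and $d \neq 0$. The group $\Aut(\pEtilde) = \Gm \times \Gm$ scales $(a, b)$ and $d$ independently, and since $a$ lies in a fixed 1-dimensional subspace, this action is transitive on the injective locus. Hence there is a unique isomorphism class of parabolic embeddings $\pE \incl \pEtilde$, and its cokernel $\pT$, supported at the unique zero of $a$, is determined by the isomorphism class of $\pE$; this yields both surjectivity and injectivity of $\alpha$. The main technical obstacle is the identification of the non-triviality of the linear condition on $a$ with condition (2) of $\Bunrel 1$; this rests on the orthogonality observation that for any $\sigma, a \in H^0(\cO(1))$ the product $\sigma a \in H^0(\cO(2))$ automatically satisfies $\sum_x \lambda_x \sigma(x) a(x) = 0$, so that the annihilator of $\{\sigma \cdot \lambda : \sigma \in H^0(\cO(1))\}$ matches the image of $H^0(\cO(1)) \to k^D$ by a dimension count.
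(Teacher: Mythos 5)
Your proposal is correct and follows essentially the same route as the paper: both parametrize parabolic maps $\pE \to \pEtilde$ by matrices $\begin{pmatrix} a & b \\ 0 & d \end{pmatrix}$, impose the four flag conditions as linear equations on $(a,b) \in \rH^0(\cO(1)) \oplus \rH^0(\cO(2))$, use condition (2) of \cref{def:relevant-locus} to show the solution space is exactly one line (plus the free scalar $d$), and conclude that the injective maps form a single $\Aut(\pEtilde)$-orbit whose cokernel provides the inverse of $\alpha$. The one step worth making explicit for surjectivity is that this unique orbit is the \emph{generic} one; this holds because the non-generic modifications of $\pEtilde$ have subbundles outside $\Bunrel 1$ (underlying bundle $\cO(2)\oplus\cO(-1)$ or isomorphic to $(\cO(1),\emptyset)\oplus(\cO,D)$), as recorded in \cref{sec:lower-modifications-of-petilde}.
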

\begin{proof}
  We prove this by constructing the inverse.
  We show that for every $\pE \in \Bunrel 1$,
  there exists exactly one inclusion
  \begin{equation}
    \label{eq:relevant-locus-fq-points-explicit-description-inclusion}
    i \colon \pE \incl \pEtilde
  \end{equation}
  up to scalar multiplication,
  and that this inclusion corresponds to the generic orbit.
  The inverse of $\alpha$ maps $\pE$ to the cokernel of this inclusion.
  
  We will assume that $\pE = (\cE, (\ell_x)_{x \in D})$
  with $\ell_x \neq \cO(1)|_x$ for all $x \in D$;
  the remaining 8 parabolic bundles in $\Bunrel 1(\bF_q)$
  (for which $\ell_x = \cO(1)|_x$ holds for exactly one $x \in D$)
  can be checked by hand or proven using a small variation of the argument below.

  Any inclusion as in \eqref{eq:relevant-locus-fq-points-explicit-description-inclusion}
  is a scalar multiple of the morphism of parabolic vector bundles
  induced by a morphism of underlying vector bundles
  \begin{equation*}
    \begin{pmatrix}
      \sigma & \tau \\ 0 & 1
    \end{pmatrix}
    \colon \cO(1) \oplus \cO \incl \cO(2) \oplus \cO
  \end{equation*}
  where $\sigma \colon \cO(1) \to \cO(2)$ is non-zero
  (necessary and sufficient for the injectivity of the map)
  and $\tau \colon \cO \to \cO(2)$ is arbitrary.
  For a map of underlying vector bundles to induce a map on parabolic bundles,
  it is necessary and sufficient that
  every flag of the source is mapped into the flag on the target.
  In this case,
  that means that $\ell_x$ should be mapped to $\cO|_x$ for every $x \in D$;
  in other words,
  $(\sigma, \tau)$ is in the kernel of the linear map
  \begin{equation*}
    \rH^0(\bP^1, \cO(1) \oplus \cO(2)) \to (\cO(2)|_x)_{x \in D}, \qquad
    (\sigma', \tau') \mapsto \left( (\sigma' \; \tau') \ell_x \right)_{x \in D}.
  \end{equation*}
  The source $\rH^0(\bP^1, \cO(1) \oplus \cO(2))$ has dimension 5,
  while the target has dimension 4,
  so that the kernel has dimension at least 1.
  The condition that the flags $(\ell_x)_{x \in D}$ do not come from a global section
  (\cref{def:relevant-locus}, condition 2)
  shows that the map is surjective,
  so that the kernel has dimension one.
  A non-zero pair $(\sigma', \tau')$ in the kernel satisfies $\sigma' \neq 0$
  (this follows from the assumptions that $\ell_x \neq \cO(1)|_x$ for all $x \in D$
  and that the flags do not come from a global section),
  so for these $\pE$, we do indeed have a unique inclusion $i \colon \pE \incl \pEtilde$
  up to scalar multiplication.
\end{proof}

\section{Identifying $\cohpar$ with $\Bunrel 1$}
\label{sec:identifying-cohpar-with-bunrel}
The previous section established a bijection on $\bF_q$-points
$\cohpar(\bF_q) \to \Bunrel 1(\bF_q)$.
In this section,
we use that fact to show that there is in fact an isomorphism
\begin{equation*}
  \alpha \colon \cohpar \isom \Bunrel 1
\end{equation*}
of stacks.

We define a substack $\cHcErel$ of the Hecke stack $\cH$ by
\begin{equation*}
  \cHcErel :=
  \stackbuilder{0 \to \pE \to \pEtilde \to \pT \to 0 \; \text{exact}}
  {\text{$\pEtilde \surj \pT$ does not factor through} \\
    \text{either of the direct summands} \\
    (\cO(2), \emptyset) \text{ or } (\cO, D) \\
    \text{of $\pEtilde = (\cO(2), \emptyset) \oplus (\cO, D)$}
  }.
\end{equation*}
This stack classifies exactly the generic length 1 lower modifications of $\pEtilde$
(generic in the sense defined just before \cref{lem:generic-modifications-of-petilde-lie-in-bunrel}).

\begin{lemma}
  \label{lem:chcerel-to-bunrel1-isom}
  The restriction of $q \colon \cH \to \Bunpar^1$ to $\cHcErel$ induces an isomorphism
  \begin{equation*}
    q^\rel \colon \cHcErel \isom \Bunrel 1, \qquad (\pEtilde \overset{\phi}{\surj} \cT) \mapsto (\ker \phi).
  \end{equation*}
\end{lemma}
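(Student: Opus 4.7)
The plan is to construct an inverse morphism $\beta \colon \Bunrel 1 \to \cHcErel$ by globalizing the pointwise inverse bijection supplied by Proposition~\ref{prop:relevant-locus-fq-points-explicit-description}. Well-definedness of $q^\rel$ (that its image lies in $\Bunrel 1$) is already supplied by Lemma~\ref{lem:generic-modifications-of-petilde-lie-in-bunrel}, and the pointwise bijection on geometric points follows verbatim from Proposition~\ref{prop:relevant-locus-fq-points-explicit-description} over any algebraically closed field; so what remains is the stacky globalization.

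For an $S$-point $\pE_S$ of $\Bunrel 1$, with $\pi \colon \bP^1 \times S \to S$ the projection and $\pEtilde_S := \pi^*\pEtilde$, I would first study the coherent sheaf
\begin{equation*}
L_S := \pi_* \sheafhom_{\mathrm{par}}(\pE_S, \pEtilde_S)
\end{equation*}
of parabolic morphisms to the constant family $\pEtilde_S$. The kernel-cokernel computation inside the proof of Proposition~\ref{prop:relevant-locus-fq-points-explicit-description} gives $\dim_k \Hom(\pE_s, \pEtilde) = 1$ at every geometric point $s \in S$; combined with vanishing of $\Ext^1(\pE_s, \pEtilde)$ (which should follow by an analogous linear-algebra argument, or alternatively by degree reasons using the parabolic Euler characteristic), cohomology and base change upgrades $L_S$ to a line bundle on $S$ whose formation commutes with further base change.

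I would then define $\beta(\pE_S)$ by descent. \'Etale-locally on $S$, a trivialization of $L_S$ produces a local injection $\iota_U \colon \pE_S|_U \incl \pEtilde_S|_U$ whose cokernel is a family in $\cohpar(U)$, and hence a local object of $\cHcErel(U)$. On an overlap the two trivializations differ by a unit $\lambda$, and the two local short exact sequences are related by the isomorphism whose middle vertical map is $\id_{\pEtilde_S}$ and whose outer vertical maps equal $\lambda$; these isomorphisms satisfy the cocycle condition because the $\lambda$'s are by construction the transition functions of $L_S$. The local objects therefore glue to a global $\beta(\pE_S) \in \cH(S)$, and openness of the generic-modification condition, together with its pointwise validity from Lemma~\ref{lem:generic-modifications-of-petilde-lie-in-bunrel}, places $\beta(\pE_S)$ in $\cHcErel(S)$.

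Both compositions are then the identity: $q^\rel \circ \beta$ returns $\ker \iota_U = \pE_S$ after descent, and for an element $0 \to \pE_S \to \pEtilde_S \to \pT_S \to 0$ of $\cHcErel(S)$ the given injection is itself a nowhere-vanishing global section of $L_S$, which supplies the trivialization (canonical up to scalar) chosen by $\beta$ on its kernel. The main obstacle I foresee is the line-bundle claim for $L_S$ on an arbitrary base $S$, i.e.\ verifying the $\Ext^1$-vanishing uniformly in families; once this is in hand the remainder is essentially bookkeeping that matches the scalar ambiguity of trivializing $L_S$ with the scalar automorphism of $\pE_S$ appearing in an automorphism of a short exact sequence in $\cHcErel$.
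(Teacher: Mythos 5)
There is a genuine gap, and it sits exactly at the step you identify as the technical heart of your construction: the claim that $\dim_k \Hom(\pE_s, \pEtilde) = 1$ at every geometric point is false, and with it the assertion that $L_S$ is a line bundle. For $\pE = (\cO(1) \oplus \cO, (\ell_x)_{x \in D})$ in the relevant locus with $\ell_x \neq \cO(1)|_x$ for all $x$, a morphism of underlying bundles $\cO(1) \oplus \cO \to \cO(2) \oplus \cO$ is an upper-triangular matrix $\begin{pmatrix} \sigma & \tau \\ 0 & d \end{pmatrix}$ with $d \in \rH^0(\bP^1, \cO)$ a scalar, and the flag condition (each $\ell_x$ maps into $\cO|_x$) constrains only $(\sigma, \tau)$, not $d$. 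The one-dimensional kernel computed in the proof of \cref{prop:relevant-locus-fq-points-explicit-description} is therefore only the space of admissible $(\sigma, \tau)$; the full space of parabolic morphisms is that kernel plus the independent parameter $d$ — equivalently, it also contains the one-dimensional space of non-injective maps factoring through the projection onto the summand $(\cO, D)$ — and hence is two-dimensional. What that proposition actually shows is that the injective maps form a single orbit under $\Aut(\pEtilde) = \Gm \times \Gm$, not a single line. Consequently $L_S$ is a rank 2 bundle, two local choices of a ``generic'' injection differ by a $\Gm \times \Gm$-valued section of $\Aut(\pEtilde_S)$ rather than by a unit, and the $\Gm$-cocycle gluing as written does not apply. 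Repairing the descent means working with the $\Aut(\pEtilde)$-torsor of generic injections and gluing with $\Gm \times \Gm$-valued transition data — which is essentially the content of \cref{lem:prel-is-isomorphism} and \cref{defn:alpha}, where the quotient $\BAut(\pEtilde)/\Gm$ is introduced precisely to absorb the extra $\Gm$. (By contrast, the $\Ext^1$-vanishing you flag as the main obstacle is unproblematic: the paper establishes the degree-wise vanishing $\Ext^1(\cE^{(i,x)}, \cEtilde^{(i,x)}) = 0$ for exactly this kind of purpose.)

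It is also worth noting that the paper proves this lemma by a different, shorter route that avoids constructing an inverse: it observes that $q^\rel$ is representable (injective on automorphism groups), proves it is smooth by showing the tangent map $\Hom(\pE, \pT) \to \Ext^1(\pE, \pE)$ is surjective (using the $\Ext^1$-vanishing above via the long exact sequence for $0 \to \pE \to \pEtilde \to \pT \to 0$), and then concludes from the bijectivity on points over field extensions already established in \cref{prop:relevant-locus-fq-points-explicit-description}. If you want to keep an inverse-construction strategy, you should either correct the rank computation and glue the torsor of generic injections, or switch to the representable-plus-smooth-plus-bijective criterion.
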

\begin{proof}
  The map $q^\rel$ is representable
  (it is injective on automorphism groups).
  It is also smooth.
  Indeed, the map on tangent spaces at the point $(\pE \incl \pEtilde \surj \pT) \in \cHcErel$
  is
  \begin{equation}
    \label{eq:map-qrel-on-tangent-spaces}
    \Hom(\pE, \pT) \to \Ext^1(\pE, \pE).
  \end{equation}
  This map appears in the long exact sequence obtained by applying
  $\RHom(\pE, \dash)$
  to the short exact sequence $0 \to \pE \to \pEtilde \to \pT \to 0$.
  The next term in this long exact sequence is
  $\Ext^1(\pE, \pEtilde)$.
  Because $\pE$ and $\pEtilde$ lie in the relevant locus and $\pE$ has one degree lower than $\pEtilde$,
  we can determine the degrees of the direct summands of the underlying vector bundles in every parabolic degree $(i,x)$
  (in even degree $2i$, it is $\cO(i) \oplus \cO(i)$ or $\cO(i + 1) \oplus \cO(i - 1)$;
  in odd degree $2i + 1$, it is $\cO(i + 1) \oplus \cO(i)$)
  and can use them to conclude that
  for every parabolic degree $(i,x)$,
  the group $\Ext^1(\cE^{(i,x)}, \cEtilde^{(i,x)})$ vanishes.
  It follows that 
  \cref{eq:map-qrel-on-tangent-spaces}
  is surjective
  and $q^\rel$ is smooth.

  We already know the map $q^\rel$ is also an isomorphism on $K$-points
  for any field extension $K$ of $\bF_q$
  (\cref{prop:relevant-locus-fq-points-explicit-description}).
  Together with the representability and smoothness,
  this proves it is an isomorphism.
\end{proof}

The automorphisms $\Gm = \Aut( (\cO(2), \emptyset))$
of the parabolic direct summand $(\cO(2), \emptyset) \subset \pEtilde$
form a subgroup of the automorphisms of $\Aut(\pEtilde)$.
By $\BAut(\pEtilde)/\Gm$,
we denote the classifying stack of the quotient $\Aut(\pEtilde)/\Aut((\cO(2), \emptyset))$.

\begin{lemma}
  \label{lem:prel-is-isomorphism}
  The restriction of $p \colon \cH \to \Bunpar \times \barcohpar$ to $\cHcErel$
  induces an isomorphism
  \begin{equation*}
    \begin{gathered}
      p^\rel \colon \cHcErel \xrightarrow{p} \BAut(\pEtilde) \times \barcohpar
      \to \BAut(\pEtilde)/\Gm \times \barcohpar, \\
      (\pEtilde \surj \pT) \mapsto (\pEtilde, [\pT])
    \end{gathered}
  \end{equation*}
\end{lemma}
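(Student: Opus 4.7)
The plan is to verify that $p^\rel$ is an isomorphism of stacks by the same three-step strategy used in the proof of \Cref{lem:chcerel-to-bunrel1-isom}: show it is representable, show it is smooth, and show that it induces an equivalence of groupoids on $K$-valued points for every field extension $K$ of $\bF_q$.

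For \emph{representability}, I would check that $p^\rel$ is injective on automorphism groups. An automorphism of a point $(\pEtilde \overset{\phi}{\surj} \pT) \in \cHcErel$ is a pair $(\psi, \eta) \in \Aut(\pEtilde) \times \Aut(\pT)$ with $\eta \circ \phi = \phi \circ \psi$, and $p^\rel$ sends it to the class $([\psi], [\eta])$. Suppose both classes are trivial: then $\psi$ lies in the subgroup $\Aut((\cO(2), \emptyset)) = \Gm$ and hence equals $(a, 1)$ in the canonical decomposition $\Aut(\pEtilde) = \Aut((\cO(2), \emptyset)) \times \Aut((\cO, D))$, while $\eta$ is a central scalar $c \in \Gm$. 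Restricting $\eta \circ \phi = \phi \circ \psi$ to the two direct summands of $\pEtilde$ yields $c\phi_1 = a\phi_1$ and $c\phi_2 = \phi_2$, where $\phi_1, \phi_2$ denote the restrictions of $\phi$. The genericity condition forces $\phi_1, \phi_2$ both to be nonzero, so $c = a$ from the first summand and $c = 1$ from the second, whence $(\psi, \eta) = (1,1)$.

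For \emph{smoothness}, I would use the same kind of $\RHom$ long exact sequence as in the proof of \Cref{lem:chcerel-to-bunrel1-isom}: the tangent space map of $p^\rel$ at a point sits inside the long exact sequence obtained by applying $\RHom(\pEtilde, -)$ to $0 \to \pE \to \pEtilde \to \pT \to 0$, and the vanishing of $\Ext^1(\pEtilde, \pT)$ (since $\pT$ is a torsion sheaf and $\pEtilde$ a vector bundle) yields the required surjectivity. For \emph{bijectivity on $K$-points}, essential surjectivity is immediate because every $[\pT] \in \barcohpar(K)$ arises as the cokernel of some generic modification of $\pEtilde$ --- this is essentially the content underlying \Cref{prop:relevant-locus-fq-points-explicit-description}, generalized from $\bF_q$ to any $K$ by the same argument. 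Fullness then follows from the representability calculation reinterpreted as an orbit statement: it says precisely that the combined $\Aut(\pEtilde) \times \Aut(\pT)$-action on the set of generic surjections $\pEtilde \surj \pT$ becomes transitive after quotienting by the distinguished subgroup $\Aut((\cO(2), \emptyset))$ of $\Aut(\pEtilde)$ and by the central $\Gm$ of $\Aut(\pT)$.

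The main technical obstacle I anticipate is the case analysis when $\Supp(\pT) \in D$, where $\pT$ may be of one of the three parabolic isomorphism types $k_p^0, k_p^{(1,0)}, k_p^{(0,1)}$. In particular, for $\pT = k_p^0$ the automorphism group $\Aut(\pT) = \Gm \times \Gm$ is strictly larger than the central one and one must explicitly compute the Hom spaces $\Hom((\cO(2), \emptyset), k_p^0)$ and $\Hom((\cO, D), k_p^0)$, identify the genericity locus inside the resulting affine space of surjections, and verify that the orbit--stabilizer analysis for the combined action reproduces the automorphism group of the image point in $\BAut(\pEtilde)/\Gm \times \barcohpar$. For $\pT$ supported outside $D$ or of type $k_p^{(1,0)}, k_p^{(0,1)}$ the full automorphism group is just the central $\Gm$, and the verification is essentially routine.
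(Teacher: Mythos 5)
Your proposal is essentially correct, but it takes a genuinely different route from the paper. The paper proves this lemma by writing down an explicit inverse: it constructs a map $\phi \colon \cohpar \to \cHcErel$ on $K$-points by exhibiting, for each $\pT$, a concrete generic surjection $\pEtilde \surj \pT$ via an explicit diagram of maps $(\cEtilde^{(i,D)})|_x \to \cT^{(i,D)}$ for $i = 0,-1,-2$, and then checks that this descends along $\cohpar \to \BAut(\pEtilde)/\Gm \times \barcohpar$ to an inverse of $p^\rel$. You instead recycle the representable--smooth--bijective-on-points criterion that the paper uses for $q^\rel$ in \cref{lem:chcerel-to-bunrel1-isom}. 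Both work; the paper's construction has the advantage of producing the inverse in families (which is what \cref{defn:alpha} and the later sheaf-theoretic arguments actually use), while yours localizes all the content into the deformation theory plus a pointwise orbit count. Your injectivity-on-automorphisms computation is correct and is a nice clean way to see faithfulness.

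Two points to tighten. First, in the smoothness step the relevant long exact sequence is the one obtained by applying $\RHom(\dash, \pT)$ (not $\RHom(\pEtilde, \dash)$) to $0 \to \pE \to \pEtilde \to \pT \to 0$: the tangent map of $p^\rel$ toward the $\barcohpar$-factor is the connecting map $\Hom(\pE, \pT) \to \Ext^1(\pT, \pT)$, whose surjectivity follows from the vanishing of the next term $\Ext^1(\pEtilde, \pT)$ --- so you named the right group, but attached it to the wrong functor. Second, be careful with the claim that fullness ``follows from the representability calculation reinterpreted as an orbit statement'': the injectivity computation only identifies the stabilizer of a generic surjection inside $\Aut((\cO(2),\emptyset)) \times \Gm_{\mathrm{central}}$; it does not by itself give transitivity of the $\Aut(\pEtilde) \times \Aut(\pT)$-action on generic surjections, nor surjectivity onto the automorphism group of the image point. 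That transitivity (``exactly one generic orbit for each $\pT$'') is the real content of the lemma and must be verified directly --- it is precisely what the paper's explicit diagram delivers in one stroke. You do flag this case analysis (including the $\Gm \times \Gm$ case $\pT = k_p^0$) as the remaining technical work, so the plan is complete, but the verification there is the proof rather than an afterthought.
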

\begin{proof}
  We prove this by constructing a map
  $\phi \colon \cohpar \to \cHcErel$
  that descends along the cover
  $\cohpar \to \BAut(\pEtilde)/\Gm \times \barcohpar$
  to an inverse of $p^\rel$.
  For simplicity, we define this map on $K$-points for any extension of the field $\bF_q$.
  (For more details on the construction on $S$-points for general $\bF_q$-schemes $S$,
  see \cite[Lemma 7.4]{mythesis}.)
  The main idea behind this isomorphism is that for every $\pT \in \cohpar$,
  there is exactly one generic orbit in the modifications of $\pEtilde$.

  Let $\pT \in \cohpar(K)$
  and let $x \in \bP^1(K)$ denote the support of $\pT$.
  For a parabolic coherent sheaf $\pF$ and $i \in \bZ$,
  we define the sheaf $\cF^{(-i,D)}$
  as $(T_D^i \pF)^0$,
  and we refer to this as the sheaf in parabolic degree $(-i, D)$.
  We write
  $\alpha \colon \cT^{(-1,D)} \to \cT^{(0,D)}$
  and
  $\beta \colon \cT^{(-2,D)} = \cT^{(0, D)} \otimes \cO(-D)|_x \to \cT^{(0,D)}$
  for the maps induced by the parabolic structure on $\pT$.

  A map from $\pEtilde$ to $\pT$
  factors through a map $\pEtilde \to \pEtilde|_x$,
  which in turn defines a chain map
  $(\cEtilde^{(\bullet, D)})|_x \to \cT^{(\bullet, D)}$.
  Knowing the maps
  $(\cEtilde^{(i, D)})|_x \to \cT^{(i, D)}$
  for $i = -2,-1,0$ is sufficient to reconstruct
  the original map $\pEtilde \surj \pT$.
  We leave it to the reader to check that the maps
  $(\cEtilde^{(i, D)})|_x \surj \cT^{(i, D)}$ for $i = -2,-1,0$ in the following diagram
  ($i = 0$ is the top row, $i = -2$ is the bottom row)
  \begin{minipage}{\textwidth} 
  \begin{equation*}
    \hidewidth
    \begin{tikzcd}
      \left(
        \cO(2)|_x \otimes (\cT^{(0,D)} \otimes \cO(-2)|_x)
      \right)
      &[-25pt] \oplus
      &[-25pt]
      \cO|_x \otimes \cT^{(-1,D)}
      \arrow[surj]{r}{(1,\alpha)}
      &
      \cT^{(0,D)}
      \\
      \left(
        \cO(2 - D)|_x \otimes (\cT^{(0,D)} \otimes \cO(-2)|_x)
      \right)
      & \oplus \ar[u]
      &
      \cO|_x \otimes \cT^{(-1,D)}
      \arrow[surj]{r}{(\beta,1)}
      &
      \cT^{(-1,D)}
      \ar[u]
      \\
      \left(
        \cO(2 - D)|_x \otimes (\cT^{(0,D)} \otimes \cO(-2)|_x)
      \right)
      & \oplus \ar[u]
      &
      \cO(-D)|_x \otimes \cT^{(-1,D)}
      \arrow[surj]{r}{(1,\alpha)}
      &
      \cT^{(0,D)} \otimes \cO(-D)|_x
      \ar[u]
    \end{tikzcd}
    \hidewidth
  \end{equation*}
\end{minipage}
  come from a well-defined map $\pEtilde \surj \pT$,
  and that the map $\phi$ defined by sending $[\pT]$ to this map
  descends to an inverse of $p^\rel$.
\end{proof}

Because the decomposition $\pEtilde \cong (\cO(2), \emptyset) \oplus (\cO, D)$
is canonical,
the automorphism group of $\pEtilde$ is canonically $\Gm \times \Gm$.
Since $\cohpar$ is canonically isomorphic to $\barcohpar \times \BGm$,
we have a canonical isomorphism
$\cohpar \isom \barcohpar \times \BAut(\pEtilde)/\Gm$.
\begin{defin}
  \label{defn:alpha}
  We define the isomorphism
  \begin{equation*}
    \alpha \colon \cohpar \isom \Bunrel 1
  \end{equation*}
  as the composition
  \begin{equation*}
    \cohpar \isom \BAut(\pEtilde)/\Gm \times \barcohpar \xrightarrow{(p^{\rel})^{-1}}
    \cHcErel \xrightarrow{q^{\rel}} \Bunrel 1.
  \end{equation*}
\end{defin}
This isomorphism allows us to use the maps
$T_\infty^{1 - d} \circ \alpha \colon \cohpar \isom \Bunrel d$ as charts of $\Bunrel d$.
We also use it to define the following map from $\Bunrel d$ to its coarse moduli space.

\begin{defin}
  \label{defn:pi-d-map-to-coarse-moduli-space-from-relevant-locus}
  Let $d \in \bZ$.
  We define
  \begin{equation*}
    \pi_d := \Supp {} \circ
    \left(
      T_\infty^{1 - d} \circ \alpha
    \right)^{-1}
    \colon
    \Bunrel d \isom \cohpar \to \bP^1.
  \end{equation*}
\end{defin}
For even $d$,
this map depends on the choice of $\infty \in D$:
using $T_x^{d - 1}$ with $x \in D \setminus \{\infty\}$
instead of $T_\infty^{d - 1}$ would result in a different map.

\section{Cusp conditions}
\label{sec:cusp-forms-complete-characterisation}

In this section,
we prove the following complete characteristation of the cusp forms,
which uses the map
$\alpha \colon \cohpar(\bF_q) \to \Bunrel 1(\bF_q)$
is the map from
\cref{prop:relevant-locus-fq-points-explicit-description}.
\begin{thm}
  \label{thm:cusp-condition-fq-pts-necessary-sufficient}
  A function $f \colon \Bunpar^1(\bF_q) \to \bQ_\ell$ satisfies the cusp conditions if and only if
  (1) it vanishes outside of $\Bunrel 1(\bF_q)$;
  and (2)
  \begin{equation}
    \label{eq:cusp-condition-fq-vanish-on-certain-classes}
    \sum_{\pE \in \cP} \frac{f(\pE)}{\# \Aut(\pE)} = 0
  \end{equation}
  for each $\cP$ equal to one of the following sets:
  \begin{enumerate}
  \item[(2.1)]
    \label{item:characterisation-cusp-forms-vanish-on-a1s-first}
    for each $y \in D$,
    the set
    \begin{equation*}
      \cP_y^{(1,0)} := \im\left(\{k_y^0, k_y^{(1,0)}\} \xrightarrow{\alpha} \Bunrel 1(\bF_q)\right);
    \end{equation*}
  \item[(2.2)]
    \label{item:characterisation-cusp-forms-vanish-on-a1s-second}
    for each $y \in D$,
    the set
    \begin{equation*}
      \cP_y^{(0,1)} := \im\left(\{k_y^0, k_y^{(0,1)}\} \xrightarrow{\alpha} \Bunrel 1(\bF_q)\right);
    \end{equation*}
    and
  \item[(2.3)]
    \label{item:characterisation-cusp-forms-vanish-on-p1s}
    for each section $\sigma \colon \bP^1  \to \cohpar$ of the support map
    $\Supp \colon \cohpar \to \bP^1$,
    the set
    \begin{equation*}
      \cP_{\sigma} := \im \left(\bP^1(\bF_q) \xrightarrow{\sigma} \cohpar(\bF_q) \xrightarrow{\alpha} \Bunrel 1(\bF_q)  \right).
    \end{equation*}
  \end{enumerate}
\end{thm}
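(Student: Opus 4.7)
I would prove the two implications separately, using the bijection $\alpha \colon \cohpar(\bF_q) \isom \Bunrel 1(\bF_q)$ from Proposition \ref{prop:relevant-locus-fq-points-explicit-description} to translate everything to $\cohpar(\bF_q)$.

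\emph{Necessity.} Condition (1) is Theorem \ref{prop:cusp-forms-vanish-outside-relevant-locus}. For (2), I would show each family of equations arises from a specific instance of \eqref{eq:recall-cups-condition-fq-pts}, combined with (1) to kill contributions from outside $\Bunrel 1$. Using the isomorphism $\Ext^1((\cM, D\setminus I), (\cL, I)) \cong \Ext^1(\cM(D\setminus I), \cL)$ from the proof of Theorem \ref{prop:cusp-forms-vanish-outside-relevant-locus}, I would match each set $\cP$ to a choice of $(\cL, \cM, I)$ as follows: for (2.1) and (2.2) with $y \in D$, a pair with $\dim \Ext^1 = 1$ exists such that the only middle terms landing in $\Bunrel 1$ are $\alpha(k_y^0)$ (from the split extension) and $\alpha(k_y^{(1,0)})$ or $\alpha(k_y^{(0,1)})$ (from the $q-1$ non-split extensions, all identified as a single isomorphism class under the $\Hom(\pM, \pL)$-action); for (2.3) with section $\sigma$, a pair with $\dim \Ext^1 = 2$ gives a $\bP^1$-family of middle-term isomorphism classes matching $\cP_\sigma$. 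Grouping the set-theoretic sum \eqref{eq:recall-cups-condition-fq-pts} by middle-term class and using the standard stacky relation between extension counts and automorphism groups then yields the weighted sum in (2).

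\emph{Sufficiency.} Conversely, given (1) and (2), I would verify every instance of \eqref{eq:recall-cups-condition-fq-pts}. By (1), only summands whose middle term lies in $\Bunrel 1$ can be nonzero. For each $(\pL, \pM, I)$, I would enumerate the isomorphism classes of middle terms in $\Bunrel 1$ that appear, compute via $\Hom$-orbit analysis their multiplicities in the extension space, and recognize the resulting finite sum as a $\bQ_\ell$-linear combination of the sums (2.1)--(2.3). The trivial cases, where no middle term lies in $\Bunrel 1$, give the empty linear combination.

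\emph{Main obstacle.} The central difficulty is the precise case analysis matching each cusp condition to a combination of (2.1)--(2.3). Two subtleties stand out: (i) identifying the correct $(\cL, \cM, I)$ for each $\cP$ in (2) requires unpacking the definition of $\alpha$ through generic modifications of $\pEtilde$ (Proposition \ref{prop:relevant-locus-fq-points-explicit-description}), in particular tracking which $I \subset D$ corresponds to which $y \in D$ or section $\sigma$; and (ii) matching multiplicities, so that the set-theoretic extension sums translate into the $1/\#\Aut(\pE)$-weighted sums of (2), will require careful bookkeeping of the automorphism groups of line bundles, extensions, and middle terms, especially in the two-dimensional $\Ext^1$ case where a $\bP^1$'s worth of $\Hom$-orbits must be organized coherently.
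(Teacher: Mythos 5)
Your necessity argument is essentially the paper's: reorder the cusp condition \eqref{eq:recall-cups-condition-fq-pts} into the weighted sum \eqref{eq:cusp-condition-summation-reordered} over $\Bunrel 1(\bF_q)$ and, for each class $\cP$, exhibit a parabolic line bundle whose saturated-injection count is a nonzero constant on $\cP$ and zero elsewhere. One imprecision: you assert that \emph{every} section $\sigma$ of $\Supp$ admits a sub/quotient pair with $\dim \Ext^1 = 2$ realizing $\cP_\sigma$. A count shows this fails: the constraint $\deg\cM + \#I - \deg\cL = 3$ together with $\deg\cL + \deg\cM = 1$ leaves only $8$ admissible pairs $(\cL,\cM,I)$, while there are $2^4 = 16$ sections (each $x \in D$ independently lands on $k_x^{(1,0)}$ or $k_x^{(0,1)}$). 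The paper verifies (2.3) for a single $\sigma$ (with $\pL = (\cO(1),\emptyset)$) and then invokes \cref{rmk:cusp-conditions-vanish-on-every-p1-you-choose}: given (2.1) and (2.2), the condition for one $\sigma$ implies it for all others, since $\cP_\sigma$ and $\cP_{\sigma'}$ differ by swaps $\alpha(k_x^{(1,0)}) \leftrightarrow \alpha(k_x^{(0,1)})$ whose contributions agree by (2.1)$-$(2.2). You need this (or an equivalent) step.

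The genuine gap is in sufficiency. You say you would ``enumerate the isomorphism classes of middle terms \ldots and recognize the resulting finite sum as a $\bQ_\ell$-linear combination of the sums (2.1)--(2.3),'' but this is the statement to be proved, not an argument, and there are infinitely many cusp conditions (all $\cL$, $\cM$ with $\deg\cL + \deg\cM = 1$ and all $I \subset D$), so case-by-case enumeration cannot close the argument. The mechanism the paper uses is a uniformity statement about the count function itself: a parabolic map $(\cL,I) \to \pE$ is a map of coherent sheaves $\cL \to (T_I\pE)^0$, and after inclusion--exclusion over $J \subset D$ to enforce saturation in every parabolic degree, each instance of the cusp condition reduces to sums of the form \eqref{eq:sufficiency-theorem-cusp-condition-reduced-equation}, in which the count $\#\homcohinjsat(\cL, T_J T_I\cE)$ depends only on the isomorphism type of the \emph{underlying non-parabolic} bundle of $T_J T_I\pE$. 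When $\#I + \#J$ is even this bundle is the same for all $\pE \in \Bunrel 1(\bF_q)$, and the condition collapses to the total sum over $\Bunrel 1(\bF_q)$, which vanishes because $\Bunrel 1(\bF_q)$ is a disjoint union of classes from (2.1)--(2.3); when $\#I + \#J$ is odd the count takes exactly two values, partitioned along a single class $\cP_x^e$ and its complement, and again the condition is a combination of the hypothesized vanishings. Without this reduction (or a substitute for it) your sufficiency direction does not go through.
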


\begin{remark}
  \label{rmk:cusp-conditions-vanish-on-every-p1-you-choose}
  Let $f \colon \Bunrel 1(\bF_q) \to \bQ_\ell$ be a function that satisfies
  \eqref{eq:cusp-condition-fq-vanish-on-certain-classes}
  for all $\cP$ of the form $\cP_x^\bullet$ with $x \in D$
  (i.e., one of the $\cP$ defined in parts (2.1) and (2.2)).
  Let $\sigma, \sigma' \colon \bP^1 \to \cohpar$ be two sections of
  $\Supp \colon \cohpar \to \bP^1$.
  Then $f$ satisfies
  \eqref{eq:cusp-condition-fq-vanish-on-certain-classes}
  for $\cP = \cP_{\sigma}$ if and only
  it satisfies
  \eqref{eq:cusp-condition-fq-vanish-on-certain-classes}
  for $\cP = \cP_{\sigma'}$.
\end{remark}

\begin{remark}
  For the proof of this theorem,
  it is convenient to first reorder the summation in the cusp condition
  (\cref{eq:recall-cups-condition-fq-pts})
  to sum over the elements of $\Bunpar(\bF_q)$
  instead of over the extensions.
  This then gives the condition that for all line bundles $\cL$ on $\bP^1$
  and all $I \subset D$,
  \begin{equation}
    \label{eq:cusp-condition-summation-reordered}
    \sum_{\pE \in \Bunrel 1(\bF_q)}
    \frac{q - 1}{\#\Aut(\pE)} \# \homparinjsat( (\cL, I), \pE) \cdot f(\pE) = 0
  \end{equation}
  where
  $\homparinjsat$ denotes the set of all parabolic injective morphisms that are saturated
  in every parabolic degree.
\end{remark}

\begin{proof}[Proof of the necessity of the conditions in \cref{thm:cusp-condition-fq-pts-necessary-sufficient}]
  The necessity of condition (1)
  (vanishing outside of the relevant locus)
  is \cref{prop:cusp-forms-vanish-outside-relevant-locus}.
  To prove the necessity of condition (2),
  it suffices by \cref{eq:cusp-condition-summation-reordered},
  to find for each class $\cP$
  a parabolic line bundle $\pL = (\cL, I)$
  such that
  \begin{equation*}
    \Bunrel 1(\bF_q) \to \bZ, \qquad
    \pE \mapsto \# \homparinjsat((\cL, I), \pE)
  \end{equation*}
  is constant non-zero on $\cP$ and zero outside of $\cP$.
  This is in essence the same idea as in the proof
  of the vanishing outside the relevant locus
  (\cref{prop:cusp-forms-vanish-outside-relevant-locus}) but
  there we wanted the $\Ext^1( (\cM, D \setminus I), (\cL, I))$
  to be zero dimensional, whereas here we choose $(\cL, I)$
  such that the Ext-groups are one and two dimensional.

  The following choices of $\pL$ work.
  Let $x \in D$.
  For $\cP_x^{(1,0)}$,
  we can take $\pL = (\cO(1), \{x\})$.
  For $\cP_x^{(0,1)}$,
  we can take $\pL = (\cO, D \setminus \{x\})$.
  For $\cP_{\sigma}$,
  where $\sigma \colon \bP^1 \to \cohpar$ is the section of $\Supp \colon \cohpar \to \bP^1$
  such that for all $x \in D$,
  $\sigma(x) = k_x^{(1,0)}$,
  we can take
  $\pL = (\cO(1), \emptyset)$.
  This proves that condition (2.3) holds for every choice of $\sigma$
  (\Cref{rmk:cusp-conditions-vanish-on-every-p1-you-choose}).
\end{proof}

\begin{proof}[Proof of the sufficiency of the conditions in \cref{thm:cusp-condition-fq-pts-necessary-sufficient}]
  Using
  the fact that a parabolic morphism $(\cL, I) \to \pE$
  (with $\cL$ a line bundle)
  is the same as a morphism
  $\cL \to (T_I \pE)^0$,
  and using
  the inclusion-exclusion principle to remove maps that are not saturated in every parabolic degree,
  we can rewrite the cusp condition from \cref{eq:cusp-condition-summation-reordered}
  as
  \begin{equation*}
    \sum_{\pE \in \Bunrel 1(\bF_q)}
    \sum_{\emptyset \subseteq J \subseteq D}
    \frac{q - 1}{\#\Aut(\pE)} \# \homcohinjsat( \cL, (T_J T_I \pE)^0) \cdot f(\pE) = 0
  \end{equation*}
  where $\homcohinjsat$ denotes the set of injective saturated maps of coherent sheaves.

  It suffices to prove for all $J \subset D$, $I \subset D$ and $\cL$ a line bundle on $\bP^1$
  \begin{equation}
    \label{eq:sufficiency-theorem-cusp-condition-reduced-equation}
    \sum_{\pE \in \Bunrel 1(\bF_q)} \frac{q - 1}{\# \Aut(\pE)} \# \homcohinjsat(\cL, T_J T_I \cE)  \cdot f(\pE) = 0.
  \end{equation}

  Suppose that $\#J + \#I$ is even.
  Then for all $\pE \in \Bunrel 1(\bF_q)$,
  we have
  $T_J T_I \pE \in \Bunrel{d}$ for some odd $d \in \bZ$,
  so $(T_J T_I \pE)^0$ is isomorphic to $\cO(\floor{d/2}) \oplus \cO(\ceil{d/2})$.
  Therefore,
  there exists $c \in \bZ$ with
  \begin{equation*}
    c = \# \homcohinjsat(\cL, T_J T_I \cE) \qquad \text{for all $\pE \in \Bunrel 1(\bF_q)$}
  \end{equation*}
  and we can rewrite
  \eqref{eq:sufficiency-theorem-cusp-condition-reduced-equation}
  as
  \begin{equation*}
    c \sum_{\pE \in \Bunrel 1(\bF_q)} \frac{f(\pE)}{\# \Aut(\pE)(\bF_q)} = 0.
  \end{equation*}
  Our function $f$ does indeed satisfy this equation,
  because $\Bunrel 1(\bF_q)$ is a disjoint union of sets $\cP$ from
  (2.1), (2.2) and (2.3) in 
  \cref{thm:cusp-condition-fq-pts-necessary-sufficient}.

  Suppose that $d := \#J + \#I$ is odd.
  The reasoning is similar,
  but slightly more complicated,
  because there are two parabolic bundles with a different underlying vector bundle.
  However, these two bundles lie in a class $\cP_x^e$.
  More precisely,
  there exist
  $x \in D$, $e \in \{(1,0), (0,1)\}$
  and $\sigma \colon \bP^1 \to \cohpar$ a section of $\Supp \colon \cohpar \to \bP^1$
  that avoids $\alpha^{-1}(\cP_x^e) \subset \cohpar$,
  such that the following holds:
  letting
  \begin{equation*}
    \mathcal Q := \{\cP_\sigma\} \cup \{ \cP_x^\bullet\}_{x \in D} \setminus \{ \cP_x^e\}
  \end{equation*}
  we have a decomposition
  \begin{equation*}
    \Bunrel 1(\bF_q) = \cP_x^e \sqcup \bigsqcup_{\cP \in \mathcal Q} \cP
  \end{equation*}
  such that
  \begin{enumerate}
  \item for $\pE \in \cP_x^e$,
    the underlying bundle of $T_J T_I \pE$ is $\cO((3 - d)/2) \oplus \cO((-1 -d)/2)$; and
  \item
    for all $\pE \in \bigcup \mathcal Q$,
    the underlying bundle of $T_J T_I \pE$ is $\cO((1 - d)/2) \oplus \cO((1 - d)/2)$.
  \end{enumerate}
  Therefore,  there exist $c,d \in \bZ$ such that
  we can rewrite
  \eqref{eq:sufficiency-theorem-cusp-condition-reduced-equation}
  as
  \begin{equation*}
    c \sum_{\pE \in \cP_x^e} \frac{f(\pE)}{\# \Aut(\pE)(\bF_q)}
    + d \sum_{\pE \in \bigcup \mathcal Q} \frac{f(\pE)}{\# \Aut(\pE)(\bF_q)}
    = 0.
  \end{equation*}
  The function $f$ does indeed satisfy this equation by assumption.
\end{proof}

\section{Determining length 1 lower modifications}
\label{sec:calculation-of-all-length-1-lower-modifications}
To give a formula for the Hecke operators,
it is useful to first determine all the length 1 lower modifications of parabolic bundles in the relevant locus,
in terms of which the Hecke operators are defined.
It suffices to do this up to repeated application of the elementary Hecke operators $T_x \colon \Bunrel d \isom \Bunrel{d - 1}$ for $x \in D$.
We describe the parabolic bundles in $\Bunrel d$ using the chart
$T_\infty^{1 - d} \circ \alpha \colon \cohpar \isom \Bunrel d$
(\cref{defn:alpha}).

Every parabolic bundle in $\Bunrel d$
of the form $(T_\infty^{1 - d} \circ \alpha)(k_x^0)$ with $x \in D$
(see \cref{sec:moduli-spaces-of-parabolic-sheaves} for our notation of parabolic torsion sheaves)
can be obtained by applying elementary Hecke operators to $\pEtilde := (\cO(2), \emptyset) \oplus (\cO, D) \in \Bunrel 2$.
Similarly,
every parabolic bundle in $\Bunrel d$
of the form $(T_\infty^{1 - d} \circ \alpha)(k_x^{(1,0)})$
or $(T_\infty^{1 - d} \circ \alpha)(k_x^{(0,1)})$ with $x \in D$
can be obtained by applying elementary Hecke operators to $\pEhat \in \Bunrel 2$,
which was defined as the unique parabolic bundle in $\Bunrel 2$ with underlying vector bundle $\cO(2) \oplus \cO$
and automorphism group $\Gm$.
Hence,
we can describe the length one lower modifications of the parabolic bundles in
$\pi_d^{-1}(D)$ in terms of the length one lower modifications of $\pEtilde$ and $\pEhat$
and successive applications of the elementary Hecke operators.
We determine these modifications
\cref{sec:lower-modifications-of-petilde}
and
\cref{sec:lower-modifications-of-pehat}.
The remaining modifications
(of parabolic bundles in $\pi_d^{-1}(\bP^1 \setminus D)$)
are determined in \cref{sec:modifications-of-points-outside-of-d}.

\subsection{Length one lower modifications of $\pEtilde$}
\label{sec:lower-modifications-of-petilde}
We have already done most of this case in
\cref{prop:relevant-locus-fq-points-explicit-description}:
we determined that the generic modifications of $\pEtilde$
at different torsion sheaves $\pT \in \cohpar$
define different parabolic bundles in the relevant locus,
and we saw that moreover,
all parabolic bundles in the relevant locus are a generic modification of $\pEtilde$
at some $\pT \in \cohpar$.

It only remains to consider the non-generic modifications of $\pEtilde$.
These do not lie in the relevant locus:
it immediately follows from the definitions that
the flags in the closed orbits correspond to modifications with subbundle isomorphic to
either $(\cO(1), \emptyset) \oplus (\cO, D)$
or a parabolic bundle with underlying vector bundle $\cO(2) \oplus \cO(-1)$.

\subsection{Length one lower modifications of $\pEhat$}
\label{sec:lower-modifications-of-pehat}
\begin{thm}
  \label{thm:qualitative-description-of-modifications-hat-ce}
  ~\begin{enumerate}
  \item Let $x \in \bP^1 \setminus D$
    and consider the map
    \begin{equation*}
      \phi \colon
      \bP((\cO(2) \oplus \cO)|_x) \to \Bunpar^1,
      \qquad
      \ell \mapsto T_x^\ell \cEhat.
    \end{equation*}
    We decompose $\bP^1 = \bP((\cO(2) \oplus \cO)|_x)$ into the singleton
    $\{\cO(2)|_x\}$ and its complement $\bA^1 = \bP((\cO(2) \oplus \cO)|_x)$.
    \begin{enumerate}
    \item 
      The restriction of $\phi$ to $\bA^1$ factors through the relevant locus.
      The parabolic bundle $\phi(\cO(2)|_x)$ does not lie in the relevant locus.
    \item
      The map
      \begin{equation*}
        \pi_1 \circ \phi|_{\bA^1} \colon \bA^1 \to \bP^1 \setminus \{x\}
      \end{equation*}
      is an isomorphism.
    \item
      All parabolic vector bundles in the image of $\bA^1$ under $\phi$
      have automorphism group $\Gm$.
    \end{enumerate}
  \item
    Let $x \in D$.
    Recall that the modifications of $\pEhat = (\cO(2) \oplus \cO, (\ell_y)_{y \in D})$ at $x$ are classified by
    \begin{equation*}
        \bP^1(\cEhat^0|_x) \cup_{\ell_x} \bP^1((T_x^{\ell_x} \pEhat)^0|_x).
    \end{equation*}
    We can decompose this set according to the isomorphism class of
    the quotient of the corresponding modification:
    \begin{equation}
      \label{eq:thm-qualitative-description-of-modifications-hat-ce-decomposition}
      \begin{gathered}
        \bP^1(\cEhat^0|_x) \cup_{\ell_x} \bP^1((T_x^{\ell_x} \pEhat)^0|_x)\\
        =
        \underbrace{\bP^1(\cEhat^0|_x)\setminus\{\ell_x\}}_{k_x^{(0,1)}}
        \quad \sqcup \quad
        \underbrace{\{\ell_x\}}_{k_x^0}
        \quad \sqcup  \quad
        \underbrace{\bP^1((T_x^{\ell_x} \pEhat)^0|_x) \setminus \{\ell_x\}}_{k_x^{(1,0)}}.
      \end{gathered}
    \end{equation}
    Let
    \begin{equation*}
      \phi \colon \bP^1(\cEhat^0|_x) \cup_{\ell_x} \bP^1((T_x^{\ell_x} \pEhat)^0|_x) \to \Bunpar^1
    \end{equation*}
    denote the map that sends an element on the left to the corresponding modification of $\pEhat$.
    The following statements describe the restriction of $\phi$ to the disjoint sets in
    \eqref{eq:thm-qualitative-description-of-modifications-hat-ce-decomposition}.
    \begin{enumerate}
    \item
      There is a unique $\ell' \in \bP^1(\cEhat^0|_x) \setminus \{\ell_x\}$ such that
      the following restriction of $\phi$
      \begin{equation*}
        \bP^1(\cEhat^0|_x) \setminus \{\ell_x\} \xrightarrow{\phi} \Bunpar^1
      \end{equation*}
      is given by
      \begin{equation*}
        \ell \mapsto
        \begin{cases}
          (\cO(2), \emptyset) \oplus (\cO(-1), D) \not \in \Bunrel 1 & \text{if $\ell = \cO(2)|_x$} \\
          T_x \pEtilde & \text{if $\ell = \ell'$} \\
          T_x \pEhat & \text{otherwise}
        \end{cases}
      \end{equation*}
    \item
      The image of $\ell_x$ is $T_x \pEhat$.
    \item
      The restriction of $\phi$ to $\bP^1((T_x^{\ell_x} \cEhat)^0|_x) \setminus \{\ell_x\}$
      factors through
      \begin{equation*}
        \phi' \colon
        \bP^1((T_x^{\ell_x} \pEhat)^0|_x) \setminus \{\ell_x\}
        \to \Bunrel 1 \setminus \pi^{-1}(\{x\})
      \end{equation*}
      The map
      \begin{equation*}
        \pi_1 \circ \phi' \colon \bP^1((T_x^{\ell_x} \pEhat)^0|_x) \setminus \{\ell_x\}
        \to \bP^1 \setminus \{x\}
      \end{equation*}
      is an isomorphism.
      All parabolic bundles in the image of $\phi'$ have isomorphism group $\Gm$.
    \end{enumerate}
  \end{enumerate}
\end{thm}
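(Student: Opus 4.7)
The plan is to fix a concrete model $\pEhat = (\cO(2) \oplus \cO, (\ell_y)_{y \in D})$ where the $\ell_y$ are in general position (not equal to $\cO(2)|_y$, and not arising from a global section $\cO \to \cO(2)$, matching the characterization just before the theorem). I translate all questions through the isomorphism $\alpha \colon \cohpar \isom \Bunrel 1$ of \cref{defn:alpha}: then $\pi_1$ becomes the support map on $\cohpar$, and for $\pE \in \Bunrel 1$, the support of $\alpha^{-1}(\pE)$ is the vanishing locus $V(\sigma) \subset \bP^1$, where $\bigl(\begin{smallmatrix}\sigma & \tau\\ 0 & 1\end{smallmatrix}\bigr)\colon \cO(1) \oplus \cO \to \cO(2) \oplus \cO$ is the unique-up-to-scalar parabolic inclusion $\pE \incl \pEtilde$ produced in \cref{prop:relevant-locus-fq-points-explicit-description}.

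For Part (1), fix $x \in \bP^1 \setminus D$. A direct kernel computation gives the underlying bundle of $T_x^\ell \cEhat$: it is $\cO(2) \oplus \cO(-1)$ for $\ell = \cO(2)|_x$ (hence outside $\Bunrel 1$ by \cref{prop:cusp-forms-vanish-outside-relevant-locus}), and $\cO(1) \oplus \cO$ for $\ell \in \bA^1$; in the latter case the flags at $D$ are inherited unchanged from $\pEhat$, so the general-position conditions of \cref{def:relevant-locus} transfer, giving (1a). For (1b), solving the parabolic constraints that cut out $(\sigma, \tau)$ (four equations in a five-dimensional source, as in the proof of \cref{prop:relevant-locus-fq-points-explicit-description}) expresses $V(\sigma)$ as an explicit rational function of $\ell$; one checks $V(\sigma) \neq x$ because any length-one modification of $\pEtilde$ at a point outside $D$ preserves the ``flag from a section'' property of $\pEtilde$'s flags, so it is isomorphic to $(\cO(1), \emptyset) \oplus (\cO, D) \notin \Bunrel 1$, contradicting $T_x^\ell \pEhat \in \Bunrel 1$. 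Combined with bijectivity on geometric points and smoothness, this upgrades to the isomorphism $\bA^1 \isom \bP^1 \setminus \{x\}$. For (1c), the only bundles in $\Bunrel 1$ with automorphism group strictly larger than $\Gm$ are $\alpha(k_y^0) = T_y \pEtilde$ for $y \in D$; an isomorphism $T_x^\ell \pEhat \cong T_y\pEtilde$ would, by composing the parabolic direct-sum summand inclusions of $T_y \pEtilde$ inherited from $\pEtilde = (\cO(2), \emptyset) \oplus (\cO, D)$ with the modification $T_x^\ell\pEhat \incl \pEhat$, produce parabolic line subbundles of $\pEhat$ that are incompatible with $\pEhat \not\cong \pEtilde$.

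For Part (2), fix $x \in D$. The set-theoretic decomposition \eqref{eq:thm-qualitative-description-of-modifications-hat-ce-decomposition} by cokernel class is the description recalled in \cref{sec:modifications-of-parabolic-sheaves}; (2b) is immediate from the definition of $T_x$. For (2a), the same underlying-bundle computation isolates $\ell = \cO(2)|_x$ as the unique flag producing a bundle outside $\Bunrel 1$; for the remaining flags, applying $T_x^{-1}$ shifts the modification into a degree-2 parabolic bundle with underlying bundle $\cO(2) \oplus \cO$, which must be one of the only two such bundles in $\Bunrel 2$ (namely $\pEtilde$ or $\pEhat$, by the characterization just after \cref{prop:relevant-locus-fq-points-explicit-description}), and comparing flag structures pins down a unique $\ell'$ giving $T_x \pEtilde$ while all others give $T_x \pEhat$. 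Part (2c) parallels Part (1) directly: flags in $\bP^1((T_x^{\ell_x} \pEhat)^0|_x) \setminus \{\ell_x\}$ parameterize length-one modifications of $T_x \pEhat$ at $x$ viewed through the parabolic shift, and the same combination of underlying-bundle computation, explicit parabolic linear system, and flag-based automorphism analysis applies.

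The main obstacle is verifying that the maps in (1b) and (2c) are scheme-theoretic isomorphisms: bijectivity on geometric points is the easy half, but showing that the inverse is regular requires that the explicit rational function $\ell \mapsto V(\sigma)$ determined by the parabolic linear system has no base-points on $\bA^1$, which ultimately invokes the no-section condition of \cref{def:relevant-locus}. The automorphism-group argument in (1c) is the other delicate piece, since the exclusion of $\alpha(k_y^0)$ from the image must use the fine incompatibility between $\pEhat$'s flag configuration and any direct-sum decomposition inherited from $\pEtilde$, rather than a purely dimensional or moduli-counting argument.
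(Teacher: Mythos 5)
Your overall strategy --- fixing an explicit model of $\pEhat$, computing underlying bundles and induced flags of the modifications, and identifying $\pi_1$ of the result via the unique inclusion into $\pEtilde$ from \cref{prop:relevant-locus-fq-points-explicit-description} --- is essentially the paper's, and most of it is sound. But the step where you exclude $x$ from the image of $\pi_1 \circ \phi|_{\bA^1}$ in part (1b) rests on a false claim. You assert that ``any length-one modification of $\pEtilde$ at a point outside $D$ preserves the flag-from-a-section property, so it is isomorphic to $(\cO(1), \emptyset)\oplus(\cO, D) \notin \Bunrel 1$.'' This contradicts \cref{lem:generic-modifications-of-petilde-lie-in-bunrel} and \cref{prop:relevant-locus-fq-points-explicit-description}: the generic length-one lower modifications of $\pEtilde$ at points of $\bP^1 \setminus D$ do lie in $\Bunrel 1$ and in fact sweep out all of $\pi_1^{-1}(\bP^1 \setminus D)$; only the two closed orbits of modifications yield the decomposable bundle or a bundle with underlying sheaf $\cO(2)\oplus\cO(-1)$. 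Worse, if your premise were true it would prove far too much: the same reasoning would exclude every $y \in \bP^1 \setminus D$ from the image of $\pi_1\circ\phi|_{\bA^1}$, contradicting the very isomorphism $\bA^1 \isom \bP^1\setminus\{x\}$ you are trying to establish.

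The exclusion of $x$ therefore still needs a real argument. Either carry out the computation you set up --- solve the linear system for $(\sigma,\tau)$ and check that the resulting rational function $\ell \mapsto V(\sigma)$ omits the value $x$ (this is in effect what the paper does, by writing down the matrix of the inclusion $T_x^\ell\pEhat \incl \pEhat$ and the induced source flags explicitly) --- or argue structurally: if $\pi_1(T_x^\ell\pEhat) = x$, then $T_x^\ell\pEhat \cong T_x^{(1:1)}\pEtilde$, so $\pEtilde$ and $\pEhat$ would both be length-one upper modifications at $x$ of one and the same parabolic bundle with underlying bundle $\cO(1)\oplus\cO$; since exactly one such upper modification has underlying bundle $\cO(2)\oplus\cO$, this forces $\pEtilde\cong\pEhat$, a contradiction. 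The remaining soft spots in your write-up (the assertion in (1a) that ``general position transfers,'' and the rather vague incompatibility argument in (1c)) are of the harmless kind: in the explicit model both reduce to the observation that the induced flags at three of the four points of $D$ already rule out their coming from a global section of $\cO(1)$, which gives indecomposability and hence $\Aut = \Gm$, whereas $T_y\pEtilde$ is decomposable.
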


\begin{notation}
  \label{not:subbundles-of-constant-sheaf}
  For the computational parts in the coming sections,
  we use the following notation.
  By $\cK$, \index{$K$@$\cK$}
  we denote the constant sheaf of rational functions functions on $\bP^1$.
  We fix a coordinate $X$ on $\bP^1$.
  Without mention to the contrary,
  we will consider the line bundle $\cO(n) = \cO(n [\infty])$ on $\bP^1$
  as the subsheaf of $\cK$
  of rational functions with a pole of order at most $n$ at $\infty$
  (or zeroes if $n$ is negative).
  Maps $\cO(n) \to \cO(m)$ with $m \geq n$
  are identified with elements of $\rH^0(\bP^1, \cO(m - n))$,
  i.e., degree $m - n$ polynomials in $X$,
  and similarly,
  maps between sub-vector bundles of $\cK^{\oplus 2}$
  can be denoted as matrices with coefficients in $\bF_q[X]$.

  Flags are denoted as follows.
  For $x \in \bP^1$, $\sigma, \tau \in \cK$ rational functions in $X$
  and $\cL, \cM \subset \cK$ line bundles,
  we denote by $(\sigma|_x : \tau|_x) \in \bP^1((\cL \oplus \cM)|_x)$
  the line generated by the pair of germs
  $(\sigma|_x, \tau|_x) \in \cL|_x \oplus \cM|_x$,
  provided that $\sigma$ and $\tau$ have appropriate poles and zeroes at $x$.
  As an example,
  for $r \in \bF_q$,
  $(r : 1) \in \bP^1((\cO(1) \oplus \cO)|_x)$ denotes a flag at $x$.
\end{notation}

\begin{proof}[Proof of \cref{thm:qualitative-description-of-modifications-hat-ce}]
  We take $\pEhat = (\cO(2) \oplus \cO, (\ell_p)_{p \in D})$
  where $\ell_p = \cO|_p$ for $p \in D \setminus \{0\}$
  and $\ell_0 = (1 : 1) \in \bP^1(\cO(2)|_0 \oplus \cO|_0)$.

  Let $x \in \bP^1(\bF_q) \setminus D$
  and let $\mu \in \bF_q \setminus \{1\}$.
  The underlying vector bundle of $\phi(\cO(2)|_x)$
  is $\cO(2) \oplus \cO(-1)$,
  so it does not lie in the relevant locus.
  The length 1 lower modification of $\pEhat$ at $x$
  with respect to the flag $(\mu x : 1) \in \bP^1((\cO(2) \oplus \cO)|_x)$
  is given by
  \begin{equation*}
    \begin{pmatrix}
      \lambda (X - x) & \mu X \\
      0 & 1
    \end{pmatrix} \colon
    (\cO(1) \oplus \cO, (\ell_p)_{p \in D \setminus \{t\}}, \ell'_t) \incl (\cO(2) \oplus \cO, (\ell_p)_{p \in D}) =: \pEhat
  \end{equation*}
  where
  $\lambda = \frac{1 - \mu}{1 - x}$
  and
  $\ell'_t = \left( - \frac{1 - x}{t - x} \frac{\mu t}{1 - \mu} : 1\right) \in \bP^1((\cO(1) \oplus \cO)|_t)$.
  Lastly,
  the modification with respect to $(x : 1) \in \bP^1((\cO(2) \oplus \cO)|_x)$
  is an inclusion of the bundle $(\cO(1) \oplus \cO, (\cO|_p)_{p \in D \setminus \{t\}}, (1 : 1) \in \bP^1((\cO(1) \oplus \cO)|_t)$.
  The subbundles of these inclusions all lie in the relevant locus and
  are pairwise distinct
  (i.e., different flags correspond to non-isomorphic sources),
  since the lines $\ell_t'$ are different
  and no non-trivial automorphism of $\cO(1) \oplus \cO$
  fixes $(\ell_p)_{p \in D \setminus \{t\}}$.
  We leave the rest of the proof of the first part,
  including the verification that $x$ is the point that does not lie in the image of $\pi_1 \circ \phi|_{\bA^1}$,
  to the reader.

  For the second part of the proposition,
  we only treat part of the case $x = t \in D$,
  the other cases being either very easy or similar.
  Here we see (using the same definitions of $(\ell_p)_{p \in D}$)
  that the length one lower modification given by the flag $\ell'_t := (v : 1) \in \bP^1((T_t \pEhat)^0|_t)$
  is the inclusion
  \begin{equation*}
    \begin{pmatrix}
      \frac{X - t}{1 - t} & 0 \\
      0 & 1
    \end{pmatrix}
    \colon
    (\cO(1) \oplus \cO, (\ell_p)_{p \in D \setminus \{t\}}, \ell'_t)
    \incl 
    (\cO(2) \oplus \cO, (\ell_p)_{p \in D}) =: \pEhat.
  \end{equation*}
\end{proof}

\subsection{Points outside of $D$}
\label{sec:modifications-of-points-outside-of-d}

The remaining case is the most difficult.

Let us first explain why we only 
need to consider modifications of parabolic bundles in $\pi_d^{-1}(\bP^1 \setminus D)$
with respect to torsion sheaves supported outside of $D$.
Suppose we wanted to know the length one lower modifications of $\pE \in \pi_d^{-1}(\bP^1 \setminus D)$
with respect to a torsion sheaf $\pT \in \cohpar$ supported at $D$.
Then by applying elementary Hecke operators to $\pE$,
we can assume $\pE \in \pi_1^{-1}(\bP^1 \setminus D)$.
That implies that $\pE$ is the generic length one lower modification of $\pEtilde$
at a point $y \in \bP^1 \setminus D$ (\cref{prop:relevant-locus-fq-points-explicit-description}).
Every length 1 lower modification of $\pE$ at $x \in D$
is therefore a length 1 lower modification of $\pEtilde$ at $y$
followed by a length 1 lower modification at $x$.
Changing the order of the modifications,
we get a length 1 lower modification of a length 1 lower modification of $\pEtilde$
at a point in $D$;
but this we have already determined in the previous sections.

The next theorem gives the remaining modifications.

\begin{thm}
  \label{thm:summary-statement-of-modifications-normal-points}
  Let $(\pE, \pT) \in \Bunrel 1 \times \barcohpar$ be any point such that $\cT$ is supported outside of $D$
  and $\pE$ is a modification of $\pEtilde$ at a point outside of $D$.
  \begin{enumerate}
  \item 
    The map
    \begin{equation*}
      \begin{split}
        q|_{p^{-1}(\pE, \pT)} \colon p^{-1}(\pE, \pT) = \bP^1(\cE|_{\Supp \cT}) &\to \Bunpar^0,\\
      \ell &\mapsto T_{\Supp \cT}^\ell \pE
      \end{split}
    \end{equation*}
    factors through $\phi \colon \bP^1(\cE|_{\Supp \cT}) \to \Bunrel 0$.
  \item
    The composition of $\phi$ with the map $\pi_0 \colon \Bunrel 0 \to \bP^1$ to the coarse moduli space
    is a degree 2 map
    $\pi_0 \circ \phi \colon \bP(\cE|_{\Supp \cT}) \to \bP^1$.
  \item
    Let $\ell \in \bP^1(\cE|_{\Supp \cT})$.
    Then $\phi(\ell)$ has $\Gm \times \Gm$ as its automorphism group
    if and only if $(\pi_0 \circ \phi)(\ell)$ lies in $D$ and $\pi_0 \circ \phi$ is ramified at $\ell$.
  \end{enumerate}
\end{thm}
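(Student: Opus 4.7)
The strategy is to make the inclusion $\pE \hookrightarrow \pEtilde$ explicit using the matrix notation of \cref{not:subbundles-of-constant-sheaf} and then to analyse the resulting length two lower modifications of $\pEtilde$, exploiting the symmetry between the two modification points.

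Set $y := \pi_1(\pE) \in \bP^1 \setminus D$ and $x := \Supp \pT \in \bP^1 \setminus D$, allowing $x = y$. By \cref{prop:relevant-locus-fq-points-explicit-description}, there is (up to scalar) a unique inclusion $i \colon \pE \hookrightarrow \pEtilde$ whose cokernel is a length one parabolic torsion sheaf at $y$. For each flag $\ell \in \bP^1(\cE|_x) \cong \bP^1$, the composition $T_x^\ell \pE \hookrightarrow \pE \xrightarrow{i} \pEtilde$ is a length two lower modification of $\pEtilde$ with cokernel supported in $\{x, y\}$. The map $\phi$ thus factors through a family of length two modifications of $\pEtilde$, which can be studied symmetrically in $x$ and $y$.

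For part (1), the plan is to refactor this length two modification by taking the saturation of $T_x^\ell \pE$ inside $\pEtilde$ at $y$. This yields a parabolic bundle $\pE''_\ell \subset \pEtilde$ whose cokernel is length one at $x$, so $\pE''_\ell$ is a length one lower modification of $\pEtilde$ at $x$. For all but the exceptional choices of $\ell$ (where the flag defining $\pE''_\ell$ fails to be generic), $\pE''_\ell$ lies in $\Bunrel 1$ by \cref{prop:relevant-locus-fq-points-explicit-description}, and $T_x^\ell \pE \subset \pE''_\ell$ is itself a length one lower modification at $y$. To conclude $T_x^\ell \pE \in \Bunrel 0$, I would verify that $T_\infty^{-1}(T_x^\ell \pE)$ is a generic length one lower modification of $\pEtilde$, i.e., satisfies the conditions of \cref{def:relevant-locus}. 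This amounts to a direct matrix computation: writing the composite inclusion as a $2\times 2$ matrix over $\bF_q[X]$, I would check that the underlying vector bundle is $\cO(1) \oplus \cO$ and that no global section of $\cO(1)$ induces the given flag collection; both are generic open conditions that hold throughout the family.

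For part (2), under the isomorphism $T_\infty \circ \alpha \colon \cohpar \isom \Bunrel 0$, the map $\pi_0 \circ \phi$ sends $\ell$ to the support $z(\ell) \in \bP^1$ of the cokernel of the unique length one lower modification $T_\infty^{-1}(T_x^\ell \pE) \hookrightarrow \pEtilde$. The direct matrix computation expresses $z(\ell)$ as a rational function of the affine parameter of $\ell$ of degree two. Conceptually, the factor of two arises because for generic $z$ the length two modification of $\pEtilde$ underlying $T_x^\ell \pE$ has two natural intermediate factorisations, one through the length one modification at $y$ (recovering $\pE$) and one through the length one modification at $z$ (recovering $T_\infty^{-1} \pEtilde$ after applying $T_\infty$); the two preimages of $z$ under $\pi_0 \circ \phi$ then correspond to the two valid choices of $\ell$ compatible with both factorisations.

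For part (3), the $\Gm \times \Gm$ automorphism group condition on $\phi(\ell)$ is, via $T_\infty \circ \alpha$, the condition that $(T_\infty \circ \alpha)^{-1}(\phi(\ell))$ has type $k_p^0$ for some $p \in D$, since the other degree one parabolic torsion sheaves in $\cohpar$ have automorphism group $\Gm$ (see \cref{sec:moduli-spaces-of-parabolic-sheaves}). For each $p \in D$, I would show by a local computation that the fiber $(\pi_0 \circ \phi)^{-1}(p)$ generically consists of two points corresponding to the torsion sheaf types $k_p^{(1,0)}$ and $k_p^{(0,1)}$ (both with $\Gm$-automorphisms, at which the map is unramified), whereas these two points coalesce exactly when the torsion sheaf type is $k_p^0$, giving the claimed equivalence. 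The main obstacle I anticipate is the case analysis: distinguishing $x = y$ from $x \neq y$ and carefully tracking the behaviour at each $p \in D$ across the three torsion sheaf types; the underlying algebra is routine matrix manipulation in the coordinate system of \cref{not:subbundles-of-constant-sheaf}, and the work lies in the bookkeeping.
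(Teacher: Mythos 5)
Your overall plan---explicit matrix computations in the coordinates of \cref{not:subbundles-of-constant-sheaf}, then a case analysis over $p \in D$ and over $x = y$ versus $x \neq y$ to read off automorphism groups and ramification---is essentially the paper's, which handles part (2) via \cref{prop:degree-2-map-images-are-different-iff} and part (3) via \cref{prop:special-preimages-of-degree-2-map}. But your treatment of part (1) has a genuine gap. You reduce the claim to checking the conditions of \cref{def:relevant-locus} for $T_\infty^{-1}(T_x^\ell \pE)$ and then dismiss the check with the assertion that these are ``generic open conditions that hold throughout the family.'' Openness buys you nothing: the theorem asserts that \emph{every} $\ell \in \bP^1(\cE|_x)$ lands in $\Bunrel 0$, and an open condition can fail on a non-empty closed subset --- indeed this is exactly what happens when one of the two points is allowed to lie in $D$ (compare \cref{thm:qualitative-description-of-modifications-hat-ce}, where $\phi(\cO(2)|_x)$ leaves the relevant locus). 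The real content of part (1) is identifying how the hypotheses $x, y \notin D$ force the conditions for \emph{all} $\ell$, including the special flags $(1:0)$ and $(0:1)$ where the underlying bundle of $T_x^\ell\pE$ jumps to $\cO(1)\oplus\cO(-1)$. The paper's \cref{lem:modifications-of-normal-points-at-normal-points-always-in-relevant-locus} does this by rewriting the bundle as $T_x^\ell(T_\infty T_y^{(1:1)}\pEtilde)$ with $T_\infty T_y^{(1:1)}\pEtilde \cong (\cO\oplus\cO, (\ell_p)_{p\in D})$, and then using that $y \notin D$ makes the four flags $\ell_p$ pairwise distinct (so at most one degenerates after the modification) and makes their cross-ratio avoid $D$ (ruling out the split bundle). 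Some argument of this kind is indispensable; your proposal never actually invokes the hypothesis that $x$ and $y$ avoid $D$.

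A smaller point on part (2): your conceptual explanation of the degree $2$ --- ``two intermediate factorisations'' of the length-two modification $T_x^\ell\pE \subset \pEtilde$ --- is not the mechanism at work; the inclusion $T_x^\ell\pE \subset \pEtilde$ and the inclusion $T_\infty^{-1}(T_x^\ell\pE) \incl \pEtilde$ furnished by \cref{prop:relevant-locus-fq-points-explicit-description} live in different degrees and are not refinements of one another. The actual source of the $2$-to-$1$ behaviour is the involution on flags induced by the symmetry $\pF \mapsto T_D\pF(2)$ (\cref{eq:td-symmetry-swaps-flags}), together with \cref{prop:degree-2-map-images-are-different-iff} showing there are no further identifications. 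That said, your fallback --- computing $z(\ell)$ directly as a rational function of the affine parameter and observing it has degree $2$ --- is legitimate and in fact more direct than the paper's route (which reconstructs the degree-$2$ map from its values on the special flags of \cref{prop:special-preimages-of-degree-2-map}), provided you also check that the resulting numerator and denominator have no common root. Part (3) as you describe it matches the paper's argument.
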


The following addendum uses the following convention.
Let $(\pE, \pT) \in \Bunrel 1 \times \barcohpar$ be as in the theorem,
i.e., $y := \pi_1(\pE)$ and $x := \Supp \cT$ lie in $\bP^1 \setminus D$.
Then up to isomorphism,
we can and do assume $\pE = T_y^{(1:1)} \pEtilde$,
so that we can consider $\pE$ as a subsheaf of $\cK^{\oplus 2}$
via the inclusions
$\pE \subset \pEtilde := (\cO(2), \emptyset) \oplus (\cO, D) \subset \cK^{\oplus 2}$.
Therefore, if $x \neq y$,
we can denote the flags in $\cE|_x$ by $(a:b)$ with $a,b \in \bF_q$ not both zero
(see \cref{not:subbundles-of-constant-sheaf}).
If $x = y$, then we use a uniformizer $\pi \in \cO|_y$
to write the flags as linear combinations of $(1:1)$ and $(1 + \pi : 1)$.
This is the notation that we use in the following addendum to the theorem.

\begin{addendum}
  \label{addendum:formula-to-theorem}
  Let $(\pE, \pT) \in \Bunrel 1 \times \barcohpar$
  and assume $y := \pi_1(\pE)$ and $x := \Supp \cT$ lie in $\bP^1 \setminus D$.
  If $x \neq y$,
  the degree 2 map in part (2) of 
  \cref{thm:summary-statement-of-modifications-normal-points}
  is given by the formula
  \begin{multline}
    \label{eq:formula-for-the-degree-2-map}
    (a : b)
    \mapsto \\
    \left(  \left(y a - x b \right)
      \cdot
      \left( (y - 1)(y - t) a - (x - 1)(x - t) b \right) : - (x - y)^2 ab \right),
  \end{multline}
  where in the target $\bP^1$, the point $(1:0)$ corresponds to $\infty \in D \subset \bP^1$.
  If $x = y$, 
  the map is given by
  \begin{equation}
    (s + r \pi : s) \mapsto
    ((ry - s)((y - 1)(y - t)r - (2y - (1 + t))s) : -s^2)
  \end{equation}
  where we use $\pi = (X - y) \in \cO|_y$ as a uniformizer.
\end{addendum}

We start the proof of
\cref{thm:summary-statement-of-modifications-normal-points}
with the following lemma.

\begin{lemma}
  \label{lem:modifications-of-normal-points-at-normal-points-always-in-relevant-locus}
  Let $x,y \in \bP^1 \setminus D$
  and let $\ell \subset (T_y^{(1:1)} \pEtilde)^0|_x$ be a flag.
  Then $T_x^\ell T_y^{(1:1)} \pEtilde$ lies in $\Bunrel 0$.
\end{lemma}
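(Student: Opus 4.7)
The plan is to reduce to the defining conditions of $\Bunrel 1$. By \cref{def:relevant-locus} we have $\Bunrel 0 = T_\infty \Bunrel 1$, so writing $\pF := T_x^\ell T_y^{(1:1)} \pEtilde$, the goal is equivalent to showing $T_\infty^{-1}\pF \in \Bunrel 1$. Since $x$ and $y$ lie in $\bP^1 \setminus D$ and in particular differ from $\infty$, the three operations $T_\infty^{-1}$, $T_x^\ell$, $T_y^{(1:1)}$ act at pairwise distinct points and hence commute. Thus
\[
T_\infty^{-1}\pF \;=\; T_x^\ell\, T_y^{(1:1)}\, T_\infty^{-1} \pEtilde,
\]
where a direct computation from the action of elementary Hecke operators on parabolic line bundles gives $T_\infty^{-1}\pEtilde = (\cO(2), \{\infty\}) \oplus (\cO(1), D \setminus \{\infty\})$, with underlying bundle $\cO(2) \oplus \cO(1)$ and flags at $D$ pointing in the coordinate directions of this decomposition (the $\cO(2)$-direction at $\infty$, the $\cO(1)$-direction at the other three points).

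Next I would compute $T_\infty^{-1}\pF$ explicitly in the matrix formalism of \cref{not:subbundles-of-constant-sheaf}, following the pattern of the proof of \cref{thm:qualitative-description-of-modifications-hat-ce}. The first modification can be written as
\[
T_y^{(1:1)} T_\infty^{-1} \pEtilde \;=\; \{(f, g) \in \cO(2) \oplus \cO(1) : f(y) = g(y)\},
\]
whose underlying bundle is computed to be $\cO(1) \oplus \cO(1)$ (for example, by computing $h^0$ of twists by $\cO(-1)$ and $\cO(-2)$), since $(1{:}1)$ is neither of the two coordinate directions in the fiber at $y$. A subsequent length-$1$ modification of $\cO(1) \oplus \cO(1)$ at $x$ with any flag $\ell$ yields underlying bundle $\cO(1) \oplus \cO$, establishing the first part of the definition of $\Bunrel 1$.

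It remains to verify the two flag conditions of \cref{def:relevant-locus}: at most one flag at a point of $D$ equals the $\cO(1)$-direction, and the flags do not all come from a global section $\cO \to \cO(1)$. The inclusion $T_\infty^{-1}\pF \incl T_\infty^{-1}\pEtilde$ is an isomorphism on fibers at points of $D$ (since $x, y \notin D$), so the flags of $T_\infty^{-1}\pF$ at $D$ correspond to the coordinate-direction flags of $T_\infty^{-1}\pEtilde$ via the linear identification of fibers $T_\infty^{-1}\pF|_p \isom T_\infty^{-1}\pEtilde|_p$. An explicit computation of this identification yields closed-form expressions for each flag of $T_\infty^{-1}\pF$, and conditions (1) and (2) of $\Bunrel 1$ reduce to simple non-vanishing statements in the parameters $x, y, \ell$ which one verifies directly. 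The main obstacle will be the case where $\ell$ is the $\cO(1)$-direction in $\cE|_x$, which causes the underlying bundle of the intermediate bundle $T_x^\ell T_y^{(1:1)}\pEtilde$ to jump to $\cO(1) \oplus \cO(-1)$; in this subcase one must carefully track how the parabolic structure at $\infty$ interacts with the final application of $T_\infty^{-1}$ before confirming (1) and (2) by the same kind of direct calculation.
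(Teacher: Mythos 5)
Your reduction is sound, and it is in fact the mirror image of the paper's own argument: the paper applies $T_\infty$ rather than $T_\infty^{-1}$ and shows that $T_x^\ell(T_\infty T_y^{(1:1)}\pEtilde)$ lies in $\Bunrel{-1}$, exploiting exactly the mechanism you use --- after commuting the elementary Hecke operator at $\infty$ past the modifications at $x,y\notin D$, the intermediate bundle (there $\cO\oplus\cO$, in your version $\cO(1)\oplus\cO(1)$) is balanced, so the final modification at $x$ has the correct underlying bundle for \emph{every} flag $\ell$, with no case distinction. For that reason your closing paragraph is a red herring: in your own reordering the subcase $\ell=\cO(1)|_x$ causes no jump in $T_\infty^{-1}\pF$; the jump you describe occurs only in the unreordered degree-$0$ bundle and merely reflects the fact that $\Bunrel{0}=T_\infty\Bunrel{1}$ contains two isomorphism classes with unbalanced underlying bundle.

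The genuine gap is that the two flag conditions of \cref{def:relevant-locus} --- which are the entire content of the lemma --- are deferred to an unexecuted ``direct calculation'', and you never indicate where the hypothesis $y\notin D$ enters them (it is needed for more than the commutation of modifications). The paper supplies the two missing observations. For condition (1): on the balanced intermediate bundle the four flags $(\ell_p)_{p\in D}$ can be canonically identified with points of a single $\bP^1$; they are pairwise distinct (this is where $y\notin D$ enters); and the modification at $x$ with flag $\ell$ produces the bad (higher-degree) direction at $p$ if and only if $\ell=\ell_p$ under this identification --- hence at most one $p\in D$ is bad. For condition (2): $y\notin D$ forces the cross-ratio of the four flags to lie outside $D$, which rules out any inclusion of the decomposable bundle $(\cO,\emptyset)\oplus(\cO(-1),D)$ (respectively its translate in your normalization), i.e.\ the flags cannot all come from a global section. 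Without these two points your argument is a correct skeleton but does not yet establish the lemma.
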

\begin{proof}
  It is equivalent to show that $T_\infty T_x^\ell T_y^{(1:1)} \pEtilde = T_x^\ell (T_\infty T_y^{(1:1)} \pEtilde)$
  lies in $\Bunrel{-1}$.
  The parabolic sheaf $T_\infty T_y^{(1:1)} \pEtilde$ is of the form $(\cO \oplus \cO, (\ell_p)_{p \in D})$.
  Because the underlying vector bundle is $\cO \oplus \cO$,
  we can naturally identify flags at different points.
  The modification $T_x^\ell T_\infty T_y^{(1:1)} \pEtilde$
  is of the form $(\cO \oplus \cO(-1), (\ell'_p)_{p \in D})$ for some flags $\ell'_p$
  and
  $\ell'_p = \cO|_p$ holds
  if and only if $\ell$ is equal to $\ell_p$
  (after identifying flags at different points).
  Moreover, the assumption $y \not \in D$ implies that the four flags $\ell_p$ are pairwise distinct.
  This proves there is at most one $p \in D$ with $\ell'_p = \cO|_p$.

  It remains to show that $T_\infty T_x^\ell T_y^{(1:1)} \pEtilde$ is not isomorphic to $(\cO, \emptyset) \oplus (\cO(-1), D)$.
  For this, we need that $y \not \in D$ implies that the cross-ratio
  $(\ell_\infty, \ell_0; \ell_1,  \ell_t)$ does not lie in $D$.
  It then follows that there is no inclusion 
  $(\cO, \emptyset) \oplus (\cO(-1), D) \incl T_\infty T_x^\ell T_y^{(1:1)} \pEtilde$.
  (More details are in \cite[Lemma 9.30]{mythesis}.)
\end{proof}

\begin{prop}
  \label{prop:special-preimages-of-degree-2-map}
  Let $x,y \in \bP^1 \setminus D$ with $x \neq y$.
  Write $\{0, 1, t\} = \{p_1, p_2, p_3\}$  and
  \begin{equation*}
    r = \frac{(x - p_1)(x - p_2)}{(y - p_1)(y - p_2)}
    \qquad \text{and} \qquad
    s = \frac{x - p_3}{y - p_3}.
  \end{equation*}
  Then the map
  \begin{equation}
    \phi \colon \bP((T_y^{(1:1)} \pEtilde)^0|_x) \to \Bunrel 1, \qquad \ell \mapsto T_x^\ell T_y^{(1:1)} \pEtilde
  \end{equation}
  sends the following flags to the following bundles:
  \begin{align*}
    && (1 : 0) &\mapsto \pEhat(-1) \\
    && (0 : 1) &\mapsto T_D \pEhat(1) \\
    \text{if $r = s$:}
    && (r : 1) &\mapsto T_{p_1} T_{p_2} \pEtilde (\cong T_{p_3} T_\infty \pEtilde) \\
    \text{if $r \neq s$:}
    && (r : 1) &\mapsto T_{p_1} T_{p_2} \pEhat  \\
    && (s : 1) &\mapsto T_{p_3} T_{\infty} \pEhat  
  \end{align*}
\end{prop}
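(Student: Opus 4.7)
The plan is to verify each of the five identifications in \cref{prop:special-preimages-of-degree-2-map} in two stages: first pin down the $\pi_0$-image of each flag using the explicit formula in \cref{addendum:formula-to-theorem}, then identify the specific parabolic bundle among the three candidates above each point of $D$ (one $\pEtilde$-type with $\Gm \times \Gm$ automorphisms, two $\pEhat$-types with $\Gm$).

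First I would substitute each of the five flags into the rational map of \cref{addendum:formula-to-theorem}. For $(1:0)$ and $(0:1)$ the denominator $ab$ vanishes and the image is $\infty$. For $(r:1)$, the key algebraic identity
\[
  y(x-p_1)(x-p_2) - x(y-p_1)(y-p_2) = (x-y)(xy - p_1 p_2)
\]
lets $(x-y)$ cancel against one factor of $(x-y)^2$ in the denominator; the resulting image lies in $\{0, 1, t\}$ at a point determined by $\{p_1, p_2\}$. A parallel computation for $(s:1)$ yields the same image point, which is forced in any case by the degree 2 of $\pi_0 \circ \phi$ together with the fact that the preimages of the other two finite points of $D$ are accounted for by the other two choices of the partition $\{p_1, p_2\} \sqcup \{p_3\}$.

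Next I would pin down which of the three bundles above that image point we land on. By part (3) of \cref{thm:summary-statement-of-modifications-normal-points}, $\phi(\ell)$ has automorphism group $\Gm \times \Gm$ exactly when the image lies in $D$ and $\pi_0 \circ \phi$ is ramified at $\ell$. When $r = s$, the flags $(r:1)$ and $(s:1)$ coincide, so this is a double preimage of $\pi_0 \circ \phi$, forcing the image to be the unique $\pEtilde$-type bundle over that point; since $T_{p_1} T_{p_2} \pEtilde$ inherits $\Gm \times \Gm$ automorphisms from $\pEtilde$ and lies over the same point (by a direct check using $\pi_0$), this identifies the image with $T_{p_1} T_{p_2} \pEtilde$. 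When $r \neq s$, the two preimages are distinct and each is a $\pEhat$-type bundle; to distinguish $T_{p_1} T_{p_2} \pEhat$ from $T_{p_3} T_\infty \pEhat$, I would read off which of the two non-isomorphic parabolic torsion classes $k_p^{(1,0)}$ or $k_p^{(0,1)}$ corresponds, via $T_\infty \circ \alpha \colon \cohpar \isom \Bunrel 0$, to each of these bundles. The analogous reasoning distinguishes $\pEhat(-1)$ from $T_D \pEhat(1)$ for the flags $(1:0)$ and $(0:1)$ over $\infty \in D$.

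The main obstacle is the concrete identification of each $\pEhat$-type bundle rather than merely its isomorphism type. To execute this, I would use the matrix notation of \cref{not:subbundles-of-constant-sheaf} to realize the underlying sheaf of $T_x^\ell T_y^{(1:1)} \pEtilde$ as a subsheaf of $\cK^{\oplus 2}$ for each special flag, and then compare the resulting parabolic structure at all four points of $D$ with the explicit descriptions of $T_{p_1} T_{p_2} \pEhat$, $T_{p_3} T_\infty \pEhat$, $\pEhat(-1)$, and $T_D \pEhat(1)$ obtained by applying elementary Hecke operators and twists to $\pEhat$. The difficulty is computational rather than conceptual; the cleanest route is to fix once and for all a dictionary between parabolic torsion sheaves $\pT \in \cohpar$ supported on $D$ and the corresponding explicit parabolic bundles in $\Bunrel 0$ via $T_\infty \circ \alpha$, and then reduce each identification to a matching against this dictionary.
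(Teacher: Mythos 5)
There is a genuine circularity problem. In the paper's logical order, both of the tools you lean on are \emph{consequences} of \cref{prop:special-preimages-of-degree-2-map}, not inputs to it. The explicit formula of \cref{addendum:formula-to-theorem} is established in the proof of \cref{thm:summary-statement-of-modifications-normal-points} precisely by combining the degree-2 statement of \cref{prop:degree-2-map-images-are-different-iff} with the special preimages computed in \cref{prop:special-preimages-of-degree-2-map}; and part (3) of that theorem (the criterion ``ramified over $D$ $\Leftrightarrow$ automorphism group $\Gm \times \Gm$''), which you use to conclude that the $r = s$ flag must land on the $\pEtilde$-type bundle, is likewise deduced from this proposition. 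So the first stage of your plan (locating the $\pi_0$-images via the addendum) and the key step of your second stage (identifying the $r=s$ case via the ramification criterion) both presuppose what is to be proved. Even granting the degree-2 fact, which is independently available from \cref{prop:degree-2-map-images-are-different-iff}, knowing that $(r:1)$ is a ramification point does not by itself tell you which of the three bundles in $\pi_0^{-1}(p_3)$ it hits.

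The non-circular content of your proposal is the final paragraph, where you propose to realize each $T_x^\ell T_y^{(1:1)} \pEtilde$ explicitly as a subsheaf of $\cK^{\oplus 2}$ and match parabolic structures --- but this is exactly the paper's entire proof, and you leave it unexecuted. The paper writes the modification as the image of $\begin{pmatrix} (X-x)(X-y) & \sigma \\ 0 & 1\end{pmatrix}$ with $\sigma$ a section of $\cO(2[\infty] - [p_1] - [p_2])$ normalized by $\sigma|_y = 1$; the choice of which flag $(r:1)$ or $(s:1)$ one is modifying along corresponds to the choice of the pair $\{p_1,p_2\}$ at which $\sigma$ vanishes, which immediately forces the flags of the subbundle at $p_1, p_2$ to be $\cO^2|_{p_1}, \cO^2|_{p_2}$, and the $\pEtilde$-versus-$\pEhat$ dichotomy reduces to whether the remaining two flags $\ell_{p_3}, \ell_{p_4}$ coincide, which a short calculation shows is equivalent to $r = s$. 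To repair your proof you would need to carry out this direct construction (or an equivalent one) rather than cite the addendum and the ramification criterion; once you do, the first stage of your plan becomes unnecessary.
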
 
\begin{proof}
  Calculating the images of $(1:0)$ and $(0:1)$ is left to the reader.

  Let $x,y \in \bP^1 \setminus D$ with $x \neq y$
  and write $D = \{p_1, p_2, p_3, p_4\}$.
  Let
  \begin{equation*}
    \sigma \colon \cO \to \cO(2[\infty] - [p_1] - [p_2])
  \end{equation*}
  be the unique global section with $\sigma|_y = 1 \in \cO|_y$.
  Let $\cO^1, \cO^2$ be two copies of $\cO$.
  For $p = p_1, p_2$,
  we define the flag $\ell_p \subset (\cO^1 \oplus \cO^2)|_p$ by $\ell_p = \cO^2|_p$.
  Then there are flags $\ell_{p_3}, \ell_{p_4}$ at $p_3$ and $p_4$
  that are different from $\cO^2|_{p_3}$ and $\cO^2|_{p_4}$,
  such that the inclusion
  \begin{equation*}
    \begin{pmatrix}
      (X - x)(X - y) & \sigma \\ 0 & 1
    \end{pmatrix}
    \colon
    (\cO^1 \oplus \cO^2, (\ell_p)_{p \in D}) 
    \incl \pEtilde
  \end{equation*}
  has image $T_x^{(\sigma|_x : 1)} T_y^{(1 : 1)} \pEtilde \subset \pEtilde$.
  Note that $\sigma|_x = r$ if $\infty \not \in \{p_1, p_2\}$
  and $\sigma|_x = s$ if $\infty \in \{p_1, p_2\}$,
  so this is indeed the modification we want to determine.
  If $\ell_{p_3} = \ell_{p_4}$,
  then $(\cO^1 \oplus \cO^2, (\ell_p)_{p \in D}) $
  is isomorphic to $T_{p_1} T_{p_2} \pEtilde$;
  otherwise,
  it is isomorphic to $T_{p_1} T_{p_2} \pEhat$.
  A calculation shows $\ell_{p_3} = \ell_{p_4}$ is equivalent to $r = s$.
  \end{proof}

Let $x, y \in \bP^1 \setminus D$.
The isomorphism
\begin{equation*}
  \begin{pmatrix}
    0 & 1 \\
    \frac{y(y - 1)(y - t)}{X(X - 1)(X - t)} & 0
  \end{pmatrix}
  \colon
  T_D \pEtilde(2) \isom \pEtilde
\end{equation*}
induces isomorphisms
\begin{equation}
  T_D T_y^{(1:1)} \pEtilde(2) \isom T_y^{(1:1)} \pEtilde,
\end{equation}
\begin{equation}
  \label{eq:td-symmetry-fixes-almost-everything}
  T_D T_\infty^d T_y^{(1:1)} \pEtilde(2) \isom T_\infty^d T_y^{(1:1)} \pEtilde \qquad (d \in \bZ)
\end{equation}
and
\begin{equation}
  \label{eq:td-symmetry-swaps-flags}
  T_D T_x^{(r:1)} T_y^{(1:1)} \pEtilde(2)
  \isom T_x^{(x(x-1)(x-t) : y (y - 1) (y - t) \cdot r)} T_y^{(1 : 1)} \pEtilde.
\end{equation}
\Cref{eq:td-symmetry-fixes-almost-everything}
shows that the operation $T_D(2) \colon \pF \mapsto T_D \pF(2)$
fixes the isomorphism classes in $\pi_d^{-1}(\bP^1 \setminus D)$
for all $d \in \bZ$.
Every $\pE \in \Bunrel d$ with $\Aut \pE \cong \Gm \times \Gm$
can be obtained from $\pEtilde$
by applying elementary Hecke operators,
and because the elementary Hecke operators commute with $T_D(2)$,
we find that $T_D\pE(2)$ is also isomorphic to $\pE$.
In other words,
all bundles in $\Bunrel d$ with the exception of those in $\pi_d^{-1}(D)$
with automorphism group $\Gm$,
are fixed by $T_D(2)$.
\Cref{eq:td-symmetry-swaps-flags}
shows that if $T_x^{(r:1)} T_y^{(1:1)} \pEtilde$
is fixed by $T_D(2)$,
then $\phi(r:1) = \phi(x(x-1)(x-t) : y (y - 1) (y - t) \cdot r)$.
The following proposition shows that 
these are in fact the only flags that $\phi$ maps to the same parabolic bundles.
\begin{prop}
  \label{prop:degree-2-map-images-are-different-iff}
  Let $\phi, x, y$ as in 
  \cref{prop:special-preimages-of-degree-2-map}.
  Let $r, s \in \bF_q$ with $r \neq s$.
  Then there is an isomorphism
  \begin{equation}
    \label{eq:degree-2-map-images-are-different-the-isom}
    \phi(r : 1)
    = T_x^{(r:1)} T_y^{(1:1)} \pEtilde
    \isom
    T_x^{(s:1)} T_y^{(1:1)} \pEtilde = \phi(s : 1)
  \end{equation}
  if and only if
  \begin{align}
    \label{eq:degree-2-map-images-are-different-cond-1}
    rs &= \frac{x(x-1)(x-t)}{y(y-1)(y-t)}
    && \\
    \intertext{and}
    \label{eq:degree-2-map-images-are-different-cond-2}
    r,s &\neq \frac{x - p}{y - p}
    && \text{for all $p \in \{0,1,t\}$}.
  \end{align}
  Every such isomorphism is given on the underlying vector bundle
  by a non-zero multiple of the map
  \begin{equation*}
    \label{eq:degree-2-map-images-are-different-the-matrix-for-the-isom}
    \frac{1}{(X - x)(X - y)}
    \begin{pmatrix}
      a & - \frac{X(X - 1)(X - t)}{y(y-1)(y-t)}  \\
      1 & d
    \end{pmatrix}
  \end{equation*}
  where $a$ and $d$ are the unique degree 2 polynomials in $X$ satisfying
  the following equations
  \begin{align*}
    a(x) &= s &
                d(x) &= -r \\
    a(y) &= 1 &
                d(y) &= -1 \\
    a'(y) - d'(y) &= -b'(y) &
                              ra'(x) - sd'(x) &= - b'(x)
  \end{align*}
\end{prop}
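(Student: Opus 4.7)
The plan is to establish both implications via the $T_D(2)$-symmetry described immediately before the proposition, and to derive the explicit matrix by tracking a natural composition. Abbreviate $\pE_r := T_x^{(r:1)} T_y^{(1:1)} \pEtilde$ and similarly $\pE_s$; by \cref{lem:modifications-of-normal-points-at-normal-points-always-in-relevant-locus} both lie in the relevant locus, so each has automorphism group exactly $\Gm$ and the space $\Hom(\pE_r, \pE_s)$ is at most one-dimensional over $\bF_q$.

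\textbf{Sufficiency.} From \eqref{eq:td-symmetry-swaps-flags} I obtain an isomorphism $T_D\pE_r(2) \cong T_x^{(x(x-1)(x-t)\,:\,y(y-1)(y-t)\cdot r)} T_y^{(1:1)} \pEtilde$, whose right-hand side equals $\pE_s$ under condition (1). Composing with a fixed isomorphism $\pE_r \cong T_D\pE_r(2)$ yields the desired $\pE_r \cong \pE_s$. Such a fixed isomorphism exists exactly when $\pE_r$ lies in the $T_D(2)$-fixed locus, i.e., when $\pE_r$ is not one of the exceptional bundles in $\pi_0^{-1}(D)$ with automorphism group $\Gm$. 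By \cref{prop:special-preimages-of-degree-2-map}, the flags $(r:1)$ producing such exceptional bundles fall into two families: $r = (x-p)/(y-p)$ with $p \in \{0,1,t\}$ (images $T_pT_\infty\pEhat$), and $r = (x-p_1)(x-p_2)/((y-p_1)(y-p_2))$ with $\{p_1,p_2\} \subset \{0,1,t\}$ (images $T_{p_1}T_{p_2}\pEhat$). Under condition (1) these two families are exchanged by the involution $r \leftrightarrow s$, so excluding the first family for both $r$ and $s$ automatically excludes the second family for both; this is precisely condition (2).

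\textbf{Necessity and matrix form.} Conversely, suppose $\pE_r \cong \pE_s$. The isomorphism is unique up to scalar and extends to an element $M \in \GL_2(\cK)$ on the generic fibre $\cK^{\oplus 2}$. A pole-and-zero analysis at $x$, $y$, and $\infty$, using the embeddings $\pE_r, \pE_s \incl \pEtilde = (\cO(2),\emptyset) \oplus (\cO,D)$, together with the degree constraints imposed by the direct-sum decomposition of $\pEtilde$, forces $M$ to have the stated shape; normalizing the $(2,1)$-entry to $1/((X-x)(X-y))$ uses up the scalar freedom. The four value conditions $a(x)=s$, $a(y)=1$, $d(x)=-r$, $d(y)=-1$ are exactly what it takes for $M$ to carry the flag $(r:1)$ at $x$ and $(1:1)$ at $y$ of $\pE_r$ onto $(s:1)$ and $(1:1)$ of $\pE_s$, while the two derivative conditions arise from matching first-order behavior at $x$ and $y$ so that $M$ identifies the parabolic subsheaves, not merely their generic fibres. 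The six conditions form a linear system in the six coefficients of $a$ and $d$ (both of degree two); the four value constraints determine $a$ and $d$ modulo their leading terms, and the remaining $2 \times 2$ system in these leading coefficients is solvable precisely when condition (1) holds, yielding the converse.

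\textbf{Main obstacle.} The main computational difficulty will be extracting condition (1) cleanly as the consistency condition of the $2 \times 2$ residual system in the leading coefficients of $a$ and $d$ (rather than some stronger relation), and verifying that condition (2) is exactly what prevents the resulting $M$ from being rank one. A short case analysis will likely be needed at the forbidden values $r, s \in \{(x-p)/(y-p) : p \in \{0,1,t\}\}$, since several of the interpolation constraints degenerate there and must be handled by hand in concert with \cref{prop:special-preimages-of-degree-2-map}.
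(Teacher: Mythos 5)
Your proposal is correct in outline, and its hard half coincides with the paper's proof: the paper establishes the proposition entirely by the direct matrix computation you describe under ``necessity,'' writing any map as $\frac{1}{(X-x)(X-y)}\bigl(\begin{smallmatrix} a & b \\ c & d\end{smallmatrix}\bigr)$ on $\cK^{\oplus 2}$ with $a,d \in \rH^0(\bP^1,\cO(2))$, $b \in \rH^0(\bP^1,\cO(4))$, $c \in \rH^0(\bP^1,\cO)$, pinning down $b$ up to scalar by the requirement that the flags at $D$ be respected, showing $c \neq 0$ and normalizing, and then reading off conditions (1) and (2) from solvability and invertibility of the resulting linear system --- which yields both implications at once. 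What you do differently is to prove sufficiency via the $T_D(2)$-symmetry of \eqref{eq:td-symmetry-swaps-flags} and \eqref{eq:td-symmetry-fixes-almost-everything}; the paper states exactly this observation in the paragraph preceding the proposition but does not invoke it in the proof. Your route buys a conceptual explanation of condition (1) (it says precisely that $(s:1)$ is the $T_D(2)$-image of $(r:1)$), at the cost of the case analysis showing that (1)+(2) place $\pE_r$ and $\pE_s$ in $\pi_0^{-1}(\bP^1 \setminus D)$, where the fixed-point statement applies; your involution argument for that is right, though you should also note that the flag $(0:1)$, i.e.\ $r=0$, is excluded because condition (1) forces $rs \neq 0$ when $x \notin D$. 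Since necessity still requires the full computation, the symmetry argument is an enhancement rather than a shortcut.

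Three small inaccuracies to repair in the write-up. First, it is not true that both bundles automatically have automorphism group exactly $\Gm$: at the ramification values of $r$ the image is $T_{p_1}T_{p_2}\pEtilde$, with automorphism group $\Gm \times \Gm$; uniqueness of the isomorphism up to scalar should instead be read off from the linear system (and condition (1) rules the ramification values out when $r \neq s$). Second, the derivative conditions on $a$ and $d$ do not encode the parabolic structure --- that is carried entirely by $b$ vanishing on $D$ while $d$ does not; they express that the numerator vanishes at $x$ and $y$ on sections of the source subsheaf and that the resulting values land in the flags $(s:1)$ and $(1:1)$ of the target. Third, condition (1) surfaces earlier than you expect: once $b$ is normalized by $b(y) = -1$, it is the compatibility of the two determinations of $a(x)$, namely $a(x) = s$ from the flag condition and $a(x) = -b(x)/r$ from regularity at $x$; after that the six linear conditions determine $a$ and $d$ uniquely, and condition (2) is what makes the determinant nonvanishing. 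None of this changes the architecture of your argument.
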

\begin{proof}
  Both parabolic sheaves
  $T_x^{(r:1)} T_y^{(1:1)} \pEtilde$
  and
  $T_x^{(s:1)} T_y^{(1:1)} \pEtilde$
  are by definition subsheaves of $\pEtilde = (\cO(2), \emptyset) \oplus (\cO, D)$.
  We consider the underlying vector bundles as subbundles of $\cK^{\oplus 2}$,
  where $\cK$ is the constant sheaf of rational functions on $\bP^1$.
  Any map
  from
  $T_x^{(r:1)} T_y^{(1:1)} \pEtilde$
  to
  $T_x^{(s:1)} T_y^{(1:1)} \pEtilde$
  is induced by a map
  \begin{equation}
    \label{eq:pf-of-degree-2-map-on-sheaf-of-rational-functions}
    \frac 1 {(X - x)(X - y)}
    \begin{pmatrix}
      a & b \\ c & d
    \end{pmatrix} \colon \cK^{\oplus 2} \to \cK^{\oplus 2}
  \end{equation}
  with
  $a \in \rH^0(\bP^1, \cO(2))$,
  $b \in \rH^0(\bP^1, \cO(4))$,
  $c \in \rH^0(\bP^1, \cO)$
  and $d \in \rH^0(\bP^1, \cO(2))$.

  The idea of the proof is to
  determine when a map as in equation
  \eqref{eq:pf-of-degree-2-map-on-sheaf-of-rational-functions}
  defines an isomorphism
  $T_x^{(r:1)} T_y^{(1:1)} \pEtilde \isom T_x^{(s:1)} T_y^{(1:1)} \pEtilde$.
  The following three conditions are necessary and sufficient:
  (1) the map \eqref{eq:pf-of-degree-2-map-on-sheaf-of-rational-functions}
  defines a map on the underlying vector bundles
  $(T_x^{(r:1)} T_y^{(1:1)} \pEtilde)^0 \isom (T_x^{(s:1)} T_y^{(1:1)} \pEtilde)$;
  (2) the map on the underlying vector bundles respects the parabolic structure;
  and 
  (3) the map \eqref{eq:pf-of-degree-2-map-on-sheaf-of-rational-functions}
  is injective.

  The last condition is sufficient to ensure that the map is an isomorphism.
  The second condition is equivalent to the condition that $b$ has zeroes at $D$,
  while $d$ does not.
  This determines $b$ up to a scalar multiple.

  To continue, we can first show that $c \neq 0$ is necessary,
  so that after scaling, we can assume $c = 1$.
  Then the first condition leads to the conditions on $a$ and $d$ stated in the theorem.
  Some calculations show that these have solutions precisely in the cases stated in the theorem.
 \end{proof}

 \begin{proof}[Proof of \cref{thm:summary-statement-of-modifications-normal-points}
   and \cref{addendum:formula-to-theorem}]
   \label{proof-of-thm:summary-statement-of-modifications-normal-points}

   The first part of the theorem
   (the modifications lie in the relevant locus)
   is exactly the statement of \cref{lem:modifications-of-normal-points-at-normal-points-always-in-relevant-locus}.
   To prove the second part,
   note that $\pi_0 \circ \phi \colon \bP^1 \to \bP^1$ has degree 2 by \cref{prop:degree-2-map-images-are-different-iff}
   (the proof for $x = y$ is similar).
   The formulas for the map $\pi_0 \circ \phi$ in the addendum
   can be checked by using that it is a degree 2 map and checking the flags that map to $D$
   (which we calculate in \cref{prop:special-preimages-of-degree-2-map} for $x \neq y$;
   the formula for $x = y$ can be obtained as a limit).
   Lastly,
   the ramification behaviour stated in the third part of the theorem
   follows from \cref{prop:special-preimages-of-degree-2-map},
   where we found some flags that map to $\pi_0^{-1}(D)$;
   we now know that these are all preimages of the points in $\pi_0^{-1}(D)$.
   %
   %
   %
   %
\end{proof}

\section{Determining the Hecke operators}
\label{sec:determining-hecke-operators}
\subsection{Basis of the cusp forms}
\label{sec:basis-of-cusp-forms-on-bunrel}
We first define a basis $\{F_z^d\}_{z \in \bF_q}$ of cusp forms on $\Bunpar^d$.
It is immediate from our characterization of the cusp forms on $\Bunpar^d(\bF_q)$
(\cref{thm:cusp-condition-fq-pts-necessary-sufficient})
that this is indeed a basis.

\begin{defin}
  \label{defn:cusp-forms-fz}
  Let $z \in \bF_q$.
  We denote by \index{$F_z$}
  \begin{equation*}
    F_z^1 = F_z \colon \Bunpar^1(\bF_q) \to \bQ_\ell
  \end{equation*}
  the unique cusp form satisfying
  for all $y \in \bF_q$
  \begin{equation*}
    F_z(T_y^{(1:1)} \pEtilde)
    =
    \begin{cases}
      1 & \text{if $y = z$} \\
      0 & \text{otherwise}
    \end{cases}.
  \end{equation*}
  For $d \in \bZ$,
  we denote by
  \index{$F_z^d$}
  $F_z^d$ the composition
  \begin{equation*}
    F_z^d := F_z \circ T_\infty^{d - 1} \colon \Bunpar^d(\bF_q) \to \bQ_\ell.
  \end{equation*}
\end{defin}

\begin{remark}
  \label{rmk:properties-of-basis-cusp-form}
  Let $z \in \bF_q$.
  The cusp form $F_z$ is supported on $\{\pi_1^{-1}(z), \pi_1^{-1}(\infty)\}$.
  It satisfies $F_z(T_\infty^{(1:1)} \pEtilde) = -1$
  (\cref{thm:cusp-condition-fq-pts-necessary-sufficient}, condition (2.3))
  and for all $x \in D$,
  $F_z(T_x^{(1:1)} \pEtilde) = F_z(\leftmodif{(1:1)}{x} \pEtilde) = - \frac{1}{q - 1}F_z(T_x \pEtilde)$
  (\cref{thm:cusp-condition-fq-pts-necessary-sufficient}, conditions (2.1) and (2.2)).
\end{remark}


\subsection{Formula for the Hecke operators}
\label{sec:the-formula-for-hecke-operators}

Let $x \in D$.
We denote by $M_x \colon \bP^1 \isom \bP^1$ the unique M\"obius transformation
that preserves $D$ and sends $\infty$ to $x$.
We note that for $x, y \in D$,
we have $M_x(y) = M_y(x)$.

\begin{thm}
  \label{thm:formula-hecke-operators}
  Let $z \in \bF_q$ and let $x \in \bP^1$.
  Let $\pT \in \cohpar$ be a parabolic torsion sheaf supported at $x$
  with automorphism group $\Gm$ 
  (automatic if $x \not \in D$)
  and let $\heckelocal x$ be the Hecke operator with respect to $\pT$.
  First suppose $x \neq \infty$.
  Then
  \begin{equation*}
    \heckelocal x F_z^0 = \sum_{y \in \bF_q} \alpha_{z,y}^x F_y
  \end{equation*}
  where
  for all $y \in \bF_q \setminus \{x\}$,
  \begin{equation*}
    \begin{split}
      \alpha_{z,y}^x &=
      \# \left\{
        r \in \bF_q^* :
        z = \frac{(yr - x)((y - 1)(y - t)r - (x - 1)(x - t))}{- (x - y)^2 r}
      \right\} 
      \\
      &\phantom{=}-
      \begin{cases}
        0 & \text{if $x \in D$ and $y \in D$} \\
        1 & \text{if $x \in D$ or $y \in D$, but not both} \\
        2 & \text{otherwise}
      \end{cases}
      \\
      &\phantom{=}-
      \begin{cases}
        q & \text{if $z \in D$ and $y = M_z(x)$} \\
        0 & \text{otherwise}
      \end{cases}
    \end{split}
  \end{equation*}
  and
  \begin{equation*}
    \alpha_{z,x}^x
    = \# \left\{
      r \in \bF_q :
      z = -(yr - 1)((y - 1)(y - t)r - (2y - (1+ t)))
    \right\}
    - q + 1.
  \end{equation*}
  If $x = \infty$,
  then the same holds with
  \begin{equation*}
    \alpha_{z,y}^\infty
    =
    \begin{cases}
      -1 & \text{if $z = y$} \\
      0 & \text{otherwise}
    \end{cases}.
  \end{equation*}
\end{thm}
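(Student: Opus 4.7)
First I would note that the Hecke operators preserve the cusp condition (this is the function-theoretic counterpart of the standard fact that Hecke operators preserve cuspidal automorphic representations, and can be checked directly by reindexing the extension sum in \eqref{eq:recall-cups-condition-fq-pts} along the Hecke correspondence). Consequently $\heckelocal x F_z^0$ is a cusp form on $\Bunpar^1(\bF_q)$. By \cref{thm:cusp-condition-fq-pts-necessary-sufficient} every such cusp form vanishes outside $\Bunrel 1$ and is determined by its values on the bundles $T_y^{(1:1)} \pEtilde$, which \cref{defn:cusp-forms-fz} sets up as the dual basis to $\{F_y\}_{y \in \bF_q}$. The theorem thus reduces to computing
\begin{equation*}
\alpha_{z,y}^x = (\heckelocal x F_z^0)(T_y^{(1:1)} \pEtilde)
\end{equation*}
explicitly for each $y \in \bF_q$.

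For $x \neq \infty$ I would unpack $\heckelocal x$ as the weighted sum over length-one lower modifications $\pE' \subset T_y^{(1:1)} \pEtilde$ at $x$ with cokernel of the specified type, each summand contributing $F_z^0(\pE') = F_z(T_\infty^{-1}\pE')$. Using the compatibility $\pi_1 \circ T_\infty^{-1} = \pi_0$, which follows directly from \cref{defn:pi-d-map-to-coarse-moduli-space-from-relevant-locus}, together with the values of $F_z$ recorded in \cref{rmk:properties-of-basis-cusp-form}, a summand is nonzero only when $\pi_0(\pE') \in \{z, \infty\}$. The explicit formula for $\pi_0 \circ \phi$ in \cref{addendum:formula-to-theorem} then converts the equation $\pi_0 \circ \phi(r:1) = z$ into precisely the rational equation in the statement, yielding the leading count $\#\{r \in \bF_q^* : z = \ldots\}$. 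The two special flags $(1:0)$ and $(0:1)$ are treated separately via \cref{prop:special-preimages-of-degree-2-map}: they map to $\pEhat(-1)$ and $T_D\pEhat(1)$ respectively, both project to $\infty$, and their contributions provide the second correction block, where the trichotomy $\{0,1,2\}$ reflects that when $x$ or $y$ lies in $D$ these special flags partially coincide with the $D$-indexed flags and must not be double-counted.

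The third correction $-q$ isolates the ramification of the degree-$2$ map $\pi_0 \circ \phi$: by \cref{thm:summary-statement-of-modifications-normal-points}(3) this map is ramified exactly over $D$, and when $z \in D$ the condition $y = M_z(x)$ identifies the unique bundle $T_z \pEtilde \in \pi_1^{-1}(z) \cap D$ with automorphism group $\Gm \times \Gm$ onto which a full $\bF_q$-line of modifications collapses; combined with the cusp relation $F_z(T_z \pEtilde) = -(q-1) F_z(T_z^{(1:1)}\pEtilde)$ from \cref{rmk:properties-of-basis-cusp-form}, this produces precisely a $-q$ overcount. The diagonal case $y = x$ uses the second formula of \cref{addendum:formula-to-theorem} in place of the first, and the constant $-q+1$ absorbs the contribution of the flag through the distinguished line at $x$ (equivalently, the modification $T_x$). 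Finally, the case $x = \infty$ is essentially a direct calculation: modifications at $\infty$ of cokernel $k_\infty^{(0,1)}$ or $k_\infty^{(1,0)}$ combine cleanly with the $T_\infty^{-1}$-twist defining $F_z^0$, leaving only the diagonal contribution $-\delta_{y,z}$. The hardest step will be matching the special-flag contributions, the ramification overcount, and the enlarged automorphism groups at $D$ against the clean closed-form expression stated in the theorem; most of the actual work lives in this bookkeeping.
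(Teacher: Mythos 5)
Your proposal is correct and follows essentially the same route as the paper: reduce to evaluating $(\heckelocal x F_z^0)(T_y^{(1:1)}\pEtilde)$ via the dual-basis property, unpack the Hecke operator as a sum over length-one lower modifications, and match each term of the stated formula against the classification of modifications in \cref{sec:calculation-of-all-length-1-lower-modifications} (the main count from \cref{addendum:formula-to-theorem}, the special flags from \cref{prop:special-preimages-of-degree-2-map}, and the ramification/automorphism corrections via \cref{rmk:properties-of-basis-cusp-form}). Your explicit opening remark that the Hecke operators preserve the cusp condition is a point the paper's proof leaves implicit, but otherwise the bookkeeping is the same.
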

\begin{proof}
  By definition of the basis $\{F_{z'}\}_{z' \in \bF_q}$,
  we have
  \begin{equation*}
    \alpha_{z,y}^x = (\heckelocal x F_z^0)(T_y^{(1:1)} \pEtilde).
  \end{equation*}
  This in turn is by definition of $\heckelocal x$ and of $F_z^0$
  \begin{equation*}
    \alpha_{z,y}^x
    =
    \sum_{\pF \subset_{\pT} T_y^{(1:1)} \pEtilde} F_z(T_\infty^{-1} \pF).
  \end{equation*}
  where the summation is
  over length 1 lower modifications $\pF \subset T_y^{(1:1)} \pEtilde$
  with cokernel $\pT$.
  In \cref{sec:calculation-of-all-length-1-lower-modifications},
  we have calculated all these modifications,
  so that the proof of this theorem is reduced to careful bookkeeping with the results from that section.
  
  The term
  $\# \left\{r \in \bF_q^* : z = \frac{(yr - x)((y - 1)(y - t)r - (x - 1)(x - t))}{- (x - y)^2 r} \right\}$ 
  that appears in the formula for $\alpha_{z,y}^x$,
  counts
  in the generic case
  ($x,y \not \in D$ and $x \neq y$; \cref{thm:summary-statement-of-modifications-normal-points})
  the lines $(r:1) \in \bP^1((T_y^{(1:1)} \pEtilde)^0|_x)$
  whose corresponding modifications have subbundle $T_\infty T_z^{(1:1)} \pEtilde$.
  (Note that the lines $(1:0) = \cO(1)|_x$ and $(0:1) = \cO|_x$ in $\bP^1((T_y^{(1:1)} \pEtilde)^0|_x)$
  are mapped to $\pEhat(-1)$ or $\pEhat(-1)$.)

  The other terms provide correction terms for the non-generic cases
  and also take into account the contributions of the points $\pF \in \pi_0^{-1}(\infty)$.
\end{proof}

\section{$E$ is its own eigenfunction; argument on $\bF_q$-points}
\label{sec:argument-on-function-level}

Here we prove on the level of $\bF_q$-points that
for any Hecke eigenfunction $f \colon \Bunpar(\bF_q) \to \bQ_\ell$,
after scaling $f$ so that $f(\pEtilde) = q - 1$,
the eigenvalue for the Hecke operator corresponding to $\pT \in \cohpar(\bF_q)$
is equal to $f(\alpha(\pT))$,
where $\alpha \colon \cohpar \isom \Bunrel 1$ is the isomorphism given in \cref{defn:alpha}.
In the following sections, we prove the analogous statement on the geometric level (\cref{thm:intro-eigensheaf}),
which implies the statement given in this section.
We only include this section,
because the proof is short and enlightening
and showcases some of the ideas that will play a role in the next sections.

\begin{prop}
  Let $f \colon \Bunpar(\bF_q) \to \bQ_\ell$
  be a cusp form and a Hecke eigenfunction with eigenvalues
  $(\lambda_y)_{y \in \bP^1 \setminus D}$ for the operators $\heckelocal y$ with $y \in \bP^1 \setminus D$,
  eigenvalues $(\lambda_y)_{y \in D}$ for the operators $\heckelocall y$ with $y \in D$
  and
  eigenvalues $(\lambda_y')_{y \in D}$ for the operators $\heckelocalr y$ with $y \in D$.
  Then after scaling $f$, we have for all $y \in \bP^1 \setminus D$
  \begin{equation*}
    f(\alpha(k_y)) = \lambda_y
  \end{equation*}
  and for all $y \in D$
  \begin{equation*}
    f(\alpha(k_y^{(0,1)})) = f(\alpha(k_y^{(1,0)})) = \lambda_y = \lambda_y'.
  \end{equation*}
\end{prop}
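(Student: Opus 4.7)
The plan is to evaluate the eigenvalue equation $\heckelocal{\pT} f = \lambda_{\pT} f$ at the single bundle $\pEtilde \in \Bunrel 2$. By the definition of the local Hecke operator on functions,
\[
\lambda_{\pT}\, f(\pEtilde) \;=\; \sum_{\pF \subset \pEtilde,\ \mathrm{cokernel} \cong \pT} f(\pF),
\]
and since $f$ is cuspidal, Theorem \ref{prop:cusp-forms-vanish-outside-relevant-locus} lets us discard every term on the right whose subbundle lies outside $\Bunrel 1$. So the entire proof reduces to identifying which modifications of $\pEtilde$ with cokernel $\pT$ land in the relevant locus, and counting them.

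My next step is to stratify the length one lower modifications of $\pEtilde$ by orbits of the flag classification under $\Aut(\pEtilde) = \Gm \times \Gm$. By the analysis opening Section \ref{sec:lower-modifications-of-petilde}, every closed-orbit flag yields a bundle outside $\Bunrel 1$ (either of the form $(\cO(1), \emptyset) \oplus (\cO, D)$ or with underlying vector bundle $\cO(2) \oplus \cO(-1)$); by Proposition \ref{prop:relevant-locus-fq-points-explicit-description}, the unique open orbit of flags with cokernel $\pT$ produces $\alpha(\pT)$. A short case distinction --- $\pT = k_y$ with $y \not\in D$ versus $\pT \in \{k_y^{(1,0)}, k_y^{(0,1)}\}$ with $y \in D$ --- shows that this open orbit always has exactly $q - 1$ elements, giving the uniform identity $\lambda_{\pT}\, f(\pEtilde) = (q - 1) f(\alpha(\pT))$. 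After rescaling $f$ once so that $f(\pEtilde) = q - 1$, the individual formulas $\lambda_y = f(\alpha(k_y))$ (for $y \not\in D$) and $\lambda_y = f(\alpha(k_y^{(1,0)}))$, $\lambda_y' = f(\alpha(k_y^{(0,1)}))$ (for $y \in D$) drop out immediately.

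The remaining equality $\lambda_y = \lambda_y'$ for $y \in D$ then comes from the cusp conditions (2.1) and (2.2) of Theorem \ref{thm:cusp-condition-fq-pts-necessary-sufficient}. Since $\alpha$ preserves automorphism groups, we have $\#\Aut(\alpha(k_y^0)) = (q-1)^2$ and $\#\Aut(\alpha(k_y^{(1,0)})) = \#\Aut(\alpha(k_y^{(0,1)})) = q-1$, so both cusp conditions collapse to
\[
f(\alpha(k_y^{(1,0)})) = f(\alpha(k_y^{(0,1)})) = -\tfrac{1}{q-1}\, f(\alpha(k_y^0)).
\]
The main obstacle is the orbit bookkeeping at the parabolic points $y \in D$, where the two-flag classification of Section \ref{sec:modifications-of-parabolic-sheaves} must be used to separate the single flag giving cokernel $k_y^0$ from the $q$ flags giving $k_y^{(1,0)}$ (respectively $k_y^{(0,1)}$), and where one must verify that the unique closed-orbit flag among the latter really lies outside $\Bunrel 1$. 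Once those combinatorics are in hand, the rest of the argument is just a groupoid count combined with the cusp vanishing.
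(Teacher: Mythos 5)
Your proposal is correct and follows essentially the same route as the paper's proof: evaluate the eigenvalue identity at $\pEtilde$, use cuspidal vanishing outside $\Bunrel 1$ to discard the closed orbits, count the generic orbit of size $q-1$ (matching \cref{prop:relevant-locus-fq-points-explicit-description}), rescale, and invoke conditions (2.1) and (2.2) of \cref{thm:cusp-condition-fq-pts-necessary-sufficient} for the final equality $\lambda_y = \lambda_y'$. No gaps.
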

\begin{proof}
  Let $y \in \bP^1 \setminus D$.
  By the definition of $\lambda_y$ and of $\heckelocal y$,
  we have
  \begin{equation}
    \label{eq:prop-argument-on-fq-pts-proof-definitions}
    \lambda_y f(\pEtilde)
    = (\heckelocal{y} f)(\pEtilde)
    = \sum_{\mathclap{\pF \incl \pEtilde \surj k_y}} f(\pF)
  \end{equation}
  where the sum is over all length one lower modifications $\pF$ of $\pEtilde$
  with respect to $k_y$.
  We recall that these modifications are classified by flags of $\cEtilde = \cO(2) \oplus \cO$
  at $y$ (\cref{sec:relevant-locus})
  and that the automorphisms of $\pEtilde$ act on these flags.
  This action has three orbits: the singleton orbits $\{\cO|_y\}$ and $\{\cO(2)|_y\}$,
  and the generic orbit.
  The first two orbits correspond to modifications $\pF$ that do not lie in the relevant locus
  $\Bunrel 1$,
  and hence the cusp form $f$ vanishes at those modifications.
  The remaining, generic orbit corresponds to modifications of the form $\alpha(k_y) \subset \pEtilde$.
  Hence, the sum
  in \cref{eq:prop-argument-on-fq-pts-proof-definitions}
  is equal to $(q - 1) f(\alpha(k_y))$,
 which proves
 \begin{equation*}
   f(\alpha(k_y)) = \frac{f(\pEtilde)}{q - 1} \cdot \lambda_y.
 \end{equation*}
 For $y \in D$, we prove in a similar way that $f(\alpha(k_y^{(1,0)})) = \lambda_y$
 and $f(\alpha(k_y^{(0,1)})) = \lambda_y'$;
the cusp conditions (\cref{thm:cusp-condition-fq-pts-necessary-sufficient}; more specifically, the combination of (2.1) and (2.2))
then imply that these are equal.
\end{proof}

\section{Cohomological properties of $E$}
\label{sec:cohomological-properties-of-e}

In this section,
we deduce some cohomological properties of the intermediate extension of the irreducible rank 2 pure local system $E$
with unipotent monodromy
along the inclusions $\bar j \colon \bP^1 \setminus D \incl \bP^1$
and $j \colon \bP^1 \setminus D \incl \bP^1$.
Recall that there are $2^4$ embeddings of $\bP^1$ into $\barcohpar$
that are compatible with the inclusions.
Note also that since $\bar j$ is an open embedding into a curve,
we have $\bar j_{!*} = \bar j_*$.

\begin{lemma}
  \label{lem:euler-characteristic-of-e-is-zero}
  The Euler-characteristic of $\bar j_{!*} E$ is zero.
\end{lemma}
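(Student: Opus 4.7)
The plan is to compute $\chi(\bP^1, \bar j_{!*} E)$ directly from the Grothendieck--Ogg--Shafarevich formula together with the local cohomology of $R\bar j_* E$ along $D$. Since the excerpt already observes that $\bar j_{!*} E = \bar j_* E$ (open embedding into a smooth curve), it suffices to compute $\chi(\bP^1, \bar j_* E)$.

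The first step is to evaluate $\chi(\bP^1 \setminus D, E)$. Because $E$ is tamely ramified at each point of $D$, all Swan conductors vanish, and Grothendieck--Ogg--Shafarevich reduces to the topological formula
\begin{equation*}
  \chi(\bP^1 \setminus D, E) = \mathrm{rk}(E) \cdot \chi(\bP^1 \setminus D) = 2 \cdot (2 - 4) = -4.
\end{equation*}

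The second step is to relate this to $\chi(\bP^1, \bar j_* E)$. Since $R\Gamma(\bP^1, R\bar j_* E) = R\Gamma(\bP^1 \setminus D, E)$, the left-hand side has Euler characteristic $-4$. On the other hand, $R\bar j_* E$ has cohomology sheaves only in degrees $0$ and $1$, with $R^1 \bar j_* E$ a skyscraper supported on $D$ whose stalk at $x \in D$ is the coinvariants $E_{\bar\eta} / (\gamma_x - 1) E_{\bar\eta}$ of the local tame monodromy $\gamma_x$. Decomposing the Euler characteristic along the cohomology sheaves therefore gives
\begin{equation*}
  -4 = \chi(\bP^1, \bar j_* E) - \sum_{x \in D} \dim E_{\gamma_x}.
\end{equation*}

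The final step is to identify each $\dim E_{\gamma_x}$. For a non-trivial unipotent automorphism of a $2$-dimensional vector space, $\gamma_x - 1$ has rank $1$, so $\dim E_{\gamma_x} = 1$ and the sum equals $4$. The main obstacle, therefore, is to justify that each $\gamma_x$ is \emph{genuinely} non-trivial: this must be part of what is meant by "tame ramification at $D$" in the setup (if $\gamma_x$ were trivial, $E$ would extend as a local system across $x$, shrinking the ramification locus). Granting this, we conclude $\chi(\bP^1, \bar j_* E) = -4 + 4 = 0$, as claimed.
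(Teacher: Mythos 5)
Your proof is correct and follows essentially the same strategy as the paper: Grothendieck--Ogg--Shafarevich with vanishing Swan conductors gives $\chi(\bP^1\setminus D, E) = 2\cdot(2-4) = -4$, and then a local analysis at $D$ contributes $+4$. The only difference is in the bookkeeping of that local contribution: the paper uses the triangle $\bR\bar j_! E \to \bar j_{!*}E \to (\bar j_{!*}E)|_D$ and adds the stalks of $\bar j_*E$ at $D$ (the inertia \emph{invariants}), whereas you compare with $\bR\bar j_*E$ and subtract the skyscraper $R^1\bar j_*E$ (the \emph{coinvariants}); for unipotent local monodromy these are both $1$-dimensional, so the two computations are dual to each other. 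You are right to flag that the whole argument hinges on each local monodromy $\gamma_x$ being a nontrivial unipotent --- the paper uses this silently when asserting $\chi\bigl((\bar j_{!*}E)|_D\bigr)=4$. If you want to close that loop rather than appeal to the meaning of ``ramified at $D$'', note that if some $\gamma_x$ were trivial, $E$ would descend to an irreducible rank~$2$ local system with unipotent monodromy on $\bP^1$ minus three points, and no such local system exists: writing $\gamma_i = I + N_i$ with $N_i$ nilpotent of rank $\le 1$ and $\gamma_1\gamma_2\gamma_3=1$, the condition $\operatorname{tr}(\gamma_1\gamma_2)=2$ forces $\operatorname{tr}(N_1N_2)=0$, which produces a common invariant line and contradicts irreducibility.
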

\begin{proof}
  The Grothendieck-Ogg-Shafarevich formula
  (\cite[formula 7.2]{sga5}, or \cite[theorem  9.1]{kindlerruelling2014})
  says
  \begin{equation*}
    \chi_\mathrm c(\bP^1_{\bar k} \setminus D, E)
    = \rank(E) \cdot \chi_\mathrm c(\bP^1_{\bar k} \setminus D, \bQ_\ell)
    - \sum_{x \in D} \mathrm{Sw}_x(E),
  \end{equation*}
  where $\chi_\mathrm c$ is the alternating sum of the dimension of the cohomology groups with compact support
  and $\mathrm{Sw}_x(E)$ is the Swan conductor,
  which is zero because $E$ is tamely ramified.
  Note that
  \begin{equation*}
    \chi_\mathrm c(\bP^1_{\bar k} \setminus D, E)
    = \chi(\bR \bar j_! E).
  \end{equation*}
  Applying the additivity of the Euler characteristic to the distinguished triangle
  \begin{equation*}
    \bR \bar j_! E \to \bar j_{!*} E \to (\bar j_{!*} E)|_D \xrightarrow{+1},
  \end{equation*}
  we find
  \begin{equation*}
    \begin{split}
      \chi(\bP^1, \bar j_{!*} E)
      &= \chi(\bR \bar j_! E)  + \chi( (\bar j_{!*} E)|_D) \\
      &= \rank(E) \cdot \chi_\mathrm c(\bP^1_{\bar k} \setminus D, \bQ_\ell)
      + \chi( (\bar j_{!*} E)|_D) \\
      &= 2 \cdot (-2) + 4 = 0.
    \end{split}
  \end{equation*}
\end{proof}

\begin{prop}
  \label{prop:cohomology-groups-of-intermediate-extension-are-zero}
  All cohomology groups $\rH^i(\bP^1, \bar j_{!*} E[1])$ are zero.
\end{prop}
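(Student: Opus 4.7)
The plan is to reduce the claim to three separate vanishing statements for $\rH^0$, $\rH^1$, and $\rH^2$ of the sheaf $\bar j_{!*} E$, and to handle these individually.  On a smooth curve, the intermediate extension of a local system along an open immersion is concentrated in a single cohomological degree, and on the sheaf level $\bar j_{!*} E = \bar j_* E$.  Hence
\begin{equation*}
  \rH^i(\bP^1, \bar j_{!*} E[1]) = \rH^{i+1}(\bP^1, \bar j_{!*} E),
\end{equation*}
which vanishes automatically outside $i \in \{-1, 0, 1\}$ since $\bP^1$ has $\ell$-adic cohomological dimension $2$.  It therefore suffices to prove $\rH^0(\bP^1, \bar j_{!*} E) = \rH^1(\bP^1, \bar j_{!*} E) = \rH^2(\bP^1, \bar j_{!*} E) = 0$.

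For $\rH^0$, I would observe
\begin{equation*}
  \rH^0(\bP^1, \bar j_{!*} E) = \rH^0(\bP^1, \bar j_* E) = \rH^0(\bP^1 \setminus D, E) = E^{\pi_1(\bP^1 \setminus D)},
\end{equation*}
and invoke irreducibility: a rank $2$ irreducible local system is non-trivial, so has no non-zero monodromy invariants.  For $\rH^2$, I would pass through $\bar j_! E$ by means of the short exact sequence $0 \to \bar j_! E \to \bar j_{!*} E \to \mathcal{Q} \to 0$ whose cokernel $\mathcal{Q}$ is a skyscraper on $D$; since $\rH^{\ge 1}(\bP^1, \mathcal{Q}) = 0$, the long exact sequence yields $\rH^2(\bP^1, \bar j_{!*} E) = \rH^2(\bP^1, \bar j_! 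E) = \rH^2_{\mathrm c}(\bP^1 \setminus D, E)$.  Poincar\'e duality identifies this last group, up to a Tate twist, with the dual of $\rH^0(\bP^1 \setminus D, E^\vee) = (E^\vee)^{\pi_1(\bP^1 \setminus D)}$, which vanishes by the same irreducibility argument applied to the (again irreducible rank $2$) local system $E^\vee$.  The middle cohomology $\rH^1(\bP^1, \bar j_{!*} E)$ is then forced to vanish by the Euler characteristic identity $\chi(\bP^1, \bar j_{!*} E) = 0$ from \cref{lem:euler-characteristic-of-e-is-zero} combined with the vanishings just established.

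I do not foresee any serious obstacle.  Everything is controlled by irreducibility of $E$ (and hence of $E^\vee$) together with the Euler characteristic identity already proved.  The only mildly delicate point is the identification $\bar j_{!*} E = \bar j_* E$ as sheaves on the curve $\bP^1$, which is a standard consequence of the analysis of the perverse $t$-structure on smooth curves together with the fact that $\bar j_* E$ has no point-supported sections at $D$.
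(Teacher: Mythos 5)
Your proposal is correct and follows essentially the same route as the paper: the two extreme cohomology groups vanish by irreducibility of $E$ and of $E^\vee$ (the latter via duality), and the middle one is then forced to vanish by the Euler characteristic identity of \cref{lem:euler-characteristic-of-e-is-zero}. The extra details you supply (the skyscraper cokernel of $\bar j_! E \to \bar j_* E$ and the Poincar\'e duality step) are just an unpacking of the paper's one-line appeal to irreducibility of $E$ and its dual.
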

\begin{proof}
  Because $E$ and its dual are irreducible,
  $\rH^{-1}(\bP^1, \bar j_{!*} E)$ and $\rH^1(\bP^1, \bar j_{!*} E)$ vanish.
  Since the Euler characteristic of $\bar j_{!*} E$ is zero
  (\cref{lem:euler-characteristic-of-e-is-zero}),
  $\rH^0(\bP^1, \bar j_{!*} E)$ also vanishes.
\end{proof}

Let $x \in D$.
Recall from
\cref{sec:moduli-spaces-of-parabolic-sheaves}
our notation
$k_x^{(1,0)}$,
$k_x^{(0,1)}$,
and
$k_x^{0}$
for three parabolic torsion sheaves in $\cohpar$
representing the three distinct isomorphism classes of sheaves in $\cohpar$ supported on $x$.
\begin{lemma}
  \label{lem:stalks-of-intermediate-extension-of-e}
  Let $x \in D$.
  \begin{enumerate}
  \item 
    The stalks of $j_{!*}E[1]$ at the torsion sheaves supported at $x$ are given by
    \begin{align*}
      (j_{!*}E[1])|_{k_x^{(1,0)}}
      &= (j_{!*}E[1])|_{k_x^{(0,1)}}
        = (\bar j_* E[1])|_x \\
      \intertext{and}
      (j_{!*} E[1])|_{k_x^0}
      &= (\bar j_* E[1])|_x \otimes \rH^*(\Gm, \bQ_\ell).
    \end{align*}
  \item
    Let $\pT \in \cohpar$ be one of the two parabolic length 1 torsion sheaves with $\Gm$-automorphisms
    that are supported on $x \in D$.
    Let
    \begin{equation*}
      \cP_x := \{k_x^0, \pT\} \subset \barcohpar
    \end{equation*}
    the substack that contains $k_x^0$ and $\pT$.
    Then
    \begin{equation}
      \rH_{\mathrm c}^*(\cP_x, (j_{!*} E)|_{\cP_x}) = 0.
    \end{equation}
  \end{enumerate}

\end{lemma}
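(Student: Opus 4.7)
The plan is to reduce everything to the local model of \cref{sec:moduli-spaces-of-parabolic-sheaves}: in a neighbourhood of the fiber over $x \in D$, the rigidified stack $\barcohpar$ is isomorphic to $[\bA^2/\Gm]$ with $\Gm$ acting anti-diagonally, and the support map corresponds to $(u,v) \mapsto uv$. Under this identification $\bP^1 \setminus D \incl \barcohpar$ becomes $\{uv \neq 0\}/\Gm \cong \bA^1 \setminus \{0\}$, while the three torsion sheaves supported at $x$ become $k_x^0 \leftrightarrow$ the origin (with $\Gm$-stabilizer), $k_x^{(1,0)} \leftrightarrow$ the orbit of $(1,0)$, and $k_x^{(0,1)} \leftrightarrow$ the orbit of $(0,1)$ (both with trivial stabilizer).

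For the stalks at $k_x^{(1,0)}$ and $k_x^{(0,1)}$, I would use the slice $\bA^1 \to [\bA^2/\Gm]$ given by $v \mapsto (1,v)$ (resp.\ $u \mapsto (u,1)$). A check on tangent spaces shows that this map is étale and injective on points, hence an open immersion; in this chart $\barcohpar$ is just an affine line. Since the composition $\bA^1 \setminus \{0\} \incl [\bA^2/\Gm] \to \bA^1 \setminus \{0\}$ is the identity in these coordinates, the pulled-back local system is just $E$, and the stalk of $j_{!*} E[1]$ at the origin of the slice is the standard IC-stalk of a local system on a punctured smooth curve, which is $(\bar j_* E[1])|_x$.

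For the stalk at $k_x^0$, I would pass to the smooth atlas $q \colon \bA^2 \to [\bA^2/\Gm]$, which is smooth of relative dimension $1$. Since intermediate extension commutes with smooth pullback up to a shift, $q^*(j_{!*}E[1])[1]$ equals the intermediate extension of $m^* E[2]$ from $\Gm^2$ to $\bA^2$, where $m(u,v) = uv$. The stalk of this IC-sheaf at $(0,0)$ I would compute via nearby cycles for the nodal degeneration $m$: the Milnor fiber at the origin is homotopy equivalent to $\Gm$, producing a factor $\rH^*(\Gm)$ tensored with $(\bar j_* E[1])|_x$. Descending back to $[\bA^2/\Gm]$ and restricting to the residual gerbe at $k_x^0$ yields the claimed formula.

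For Part 2, the substack $\cP_x$ corresponds in the local model to one branch $[\bA^1/\Gm]$ of $\{uv = 0\}/\Gm$, stratified as the open point $\pT$ and the closed residual gerbe $\BGm = k_x^0$. I would plug the stalks from Part 1 into the excision triangle for this open-closed decomposition: the two contributions cancel because the factor $\rH^*(\Gm)$ at the closed gerbe is exactly dual, in the sense of $\rH^*_c(\BGm)$ versus $\rH^*_c(\pt)$, to the contribution of the open stratum. The main obstacle I anticipate is the stalk computation at $k_x^0$: working on the singular model $\bA^2$ requires careful tracking of nearby cycles of $m^* E$ across the nodal singularity $uv = 0$ together with $\Gm$-equivariant descent, and in particular pinning down how the Tate twists and shifts combine to produce precisely $\rH^*(\Gm)$; once this formula is established, the cancellation in Part 2 follows easily from excision.
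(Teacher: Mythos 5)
Your route is genuinely different from the paper's, whose entire proof is a citation of \cite[Corollary 4.5]{heinloth2004}; you are in effect reproving that corollary in the local model, and your treatment of the two free orbits via the slice $v \mapsto (1,v)$ is correct and complete. The two remaining steps each have a real gap, however. For the stalk at $k_x^0$: the Milnor fibre of $m=uv$ computes nearby cycles, whose stalk at the origin is $E \otimes \rH^*(\Gm)$ --- rank $4$, not rank $2$ --- and the passage from this to the IC stalk is not a matter of ``Tate twists and shifts''; it is precisely where the structure of the local monodromy enters, and your sketch never uses that $E$ is irreducible. If the local monodromy $T=e^N$ of $E$ at $x$ were trivial, $m^*E[2]$ would extend as a shifted local system across the node and the IC stalk at the origin would be $E_x[2]$, so the asserted formula would be \emph{false}; the same failure shows that ``IC stalk $=$ monodromy invariants of the Milnor fibre cohomology'' is not a valid general principle. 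The statement holds because irreducibility forces $N\neq 0$ at every point of $D$ (otherwise $E$ descends to $\bP^1$ minus three points, where no irreducible rank $2$ unipotent local system exists), hence $N$ is a single Jordan block with $\ker N=\mathrm{im}\,N=E^{I_x}$. With that input, the two-step Deligne construction through the strata of the node (equivalently, the Cattani--Kaplan--Schmid/Kashiwara--Kawai complex $E\to NE\oplus NE\to N^2E=0$ for unipotent normal-crossings degenerations) gives $\ker N[2]\oplus\bigl((NE\oplus NE)/\Delta(NE)\bigr)[1]\cong E^{I_x}[2]\oplus E^{I_x}(-1)[1]$, which is the claimed answer. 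You must carry out this truncation argument and make the use of irreducibility explicit.

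For Part 2, knowing the stalks from Part 1 is not sufficient to run your excision argument: $\rH^*_{\mathrm c}(\BGm,G)$ depends on $G$ as an object of the equivariant derived category, not only on its underlying stalk. For the constant object with stalk $V\otimes\rH^*(\Gm)$ one gets $V\otimes\rH^*(\Gm)\otimes\rH^*_{\mathrm c}(\BGm)$, which is unbounded below and cannot cancel against the single contribution of the open point, so the ``duality'' you invoke does not close up at the level of stalks. What makes the cancellation work is the object-level identity of \cref{rmk:cohomological-property-of-e-via-maps}: the pullback of $j_{!*}E$ along the whole branch $\bA^1\to\cP_x$ is $(\bar j_{!*}E)|_x\otimes\bR(\Gm\incl\bA^1)_*\bQ_\ell$, equivariantly. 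Your atlas analysis is close to proving this stronger statement; establish it first and deduce the vanishing of $\rH^*_{\mathrm c}(\cP_x,\cdot)$ from it, rather than arguing stalkwise.
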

\begin{proof}
  This follows from \cite[corollary 4.5]{heinloth2004}
  (in particular the first few sentences of the proof).
\end{proof}

\begin{remark}
  \label{rmk:cohomological-property-of-e-via-maps}
  Let $x \in D$.
  Consider the map
  \begin{equation*}
    k_x^{(\dash, 0)} \colon \bA^1 \to \barcohpar,
    \qquad \lambda \mapsto (\mathellipsis \to k_x \xrightarrow{\lambda} k_x \xrightarrow{0} k_x \to \mathellipsis)
  \end{equation*}
  whose image in $\barcohpar$ contains exactly the points $k_x^{(1,0)}$ and $k_x^0$,
  and also consider the analogous map
  \begin{equation*}
    k_x^{(0, \dash)} \colon \bA^1 \to \barcohpar,
    \qquad \lambda \mapsto (\mathellipsis \to k_x \xrightarrow{0} k_x \xrightarrow{\lambda} k_x \to \mathellipsis).
  \end{equation*}
  The statements in
  \cref{lem:stalks-of-intermediate-extension-of-e}
  follow from the more explicit formula
  \begin{equation*}
    (k_x^{(\dash, 0)})^* (j_{!*} E)
    = (k_x^{(0, \dash)})^* (j_{!*} E)
    = (\bar j_{!*} E)|_x \otimes \bR (\Gm \incl \bA^1)_* \bQ_\ell.
  \end{equation*}
\end{remark}

\section{Definition and perversity of the Hecke eigensheaf}
\label{sec:definition-and-perversity-of-hecke-eigensheaf}

Here we define the proposed Hecke eigensheaf $\Aut_E$ associated
to the irreducible pure rank 2 local system $E$
and prove that it is irreducible and perverse.
The proof that it is in fact the Hecke eigensheaf associated to $E$
is \cref{thm:aute-is-indeed-hecke-eigensheaf}.

Recall that we have a natural inclusion $j \colon \bP^1 \setminus D \incl \barcohpar$
and a canonical isomorphism $\alpha \colon \cohpar \isom \Bunrel 1$.
Since $\barcohpar$ is the $\Gm$-rigidification of $\cohpar$
and $\Gm$-actions on $\ell$-adic sheaves are necessarily trivial,
the category of $\ell$-adic sheaves on $\barcohpar$ is equivalent to the category of $\ell$-adic sheaves
on $\cohpar$.
We write
\begin{equation}
  \cL_E := j_{!*} E
\end{equation}
for the complex of sheaves on $\barcohpar$
and use the same notation for its pullback to $\cohpar$.

We define $\Aut_E \in \boundedderivcat(\Bunpar, \bQ_\ell)$ by first defining its restriction
$\Aut_E^1 := \Aut_E|_{\Bunpar^1}$
as the extension by zero of $\alpha_* \cL_E$ along $\Bunrel 1 \incl \Bunpar^1$.
If $\Aut_E$ is to be a Hecke eigensheaf,
it is in particular an eigensheaf for the elementary Hecke operator $T_\infty$;
we use this to define the restriction $\Aut_E^d := \Aut_E|_{\Bunpar^d}$ for $d \in \bZ$.
\index{$AutEd$@$\Aut_E^d$}
This leads to the following definition.
We denote by $\Ecoeff$ the constant local system whose fiber is the fiber of $j_{!*} E$ at $k_{\infty}^{(1,0)}$.

\begin{defin}
  \label{defn:aut-e}
  For $E$ a pure irreducible rank 2 local system on $\bP^1 \setminus D$ with unipotent monodromy,
  we define $\Aut_E \in \boundedderivcat(\Bunpar, \bQ_\ell)$ as the complex that is zero outside of
  \index{$AutE$@$\Aut_E$}
  $\Bunrel \bullet \subset \Bunpar$
  and that satisfies
  \begin{equation*}
    \Aut_E|_{\Bunrel d} = (T_\infty^{d - 1})^* (\alpha_* \cL_E[1]) \otimes \Ecoeff^{\otimes -d + 1}.
  \end{equation*}
\end{defin}

The main result of this section is the following theorem.
\begin{thm}
  \label{thm:irreducibility-perversity-of-aute}
  For every $d \in \bZ$,
  the complex $\Aut_E|_{\Bunpar^d}$ is an irreducible perverse sheaf.
\end{thm}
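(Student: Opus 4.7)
The plan is to reduce to $d = 1$, then verify irreducibility and perversity on the open substack $\Bunrel 1 \subset \Bunpar^1$, and finally check that both properties survive the extension by zero to $\Bunpar^1$. For the reduction, the elementary Hecke operator $T_\infty$ is an isomorphism of stacks that, by the definition of the relevant locus (\cref{def:relevant-locus}), restricts to $\Bunrel d \isom \Bunrel{d-1}$. Iterating gives an isomorphism $T_\infty^{d-1} \colon \Bunpar^d \isom \Bunpar^1$ identifying $\Bunrel d$ with $\Bunrel 1$. Pullback along an isomorphism and tensoring with the constant local system $\Ecoeff^{\otimes -d + 1}$ each preserve both irreducibility and perversity, so from the defining formula for $\Aut_E$ it is enough to treat $d = 1$.

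For the analysis on $\Bunrel 1$, I would use the canonical isomorphism $\alpha \colon \cohpar \isom \Bunrel 1$ from \cref{defn:alpha} to transport the problem to $\cohpar$. The shifted local system $E[1]$ is an irreducible perverse sheaf on the smooth curve $\bP^1 \setminus D$, so its intermediate extension $\cL_E[1] = j_{!*}(E[1])$ along the open embedding $j \colon \bP^1 \setminus D \incl \barcohpar$ is irreducible perverse on $\barcohpar$. The projection $\cohpar \to \barcohpar$ is a $\Gm$-gerbe, along which pullback is an equivalence of constructible sheaf categories respecting the perverse t-structure; thus $\cL_E[1]$ remains irreducible perverse on $\cohpar$. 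Pushing forward along the isomorphism $\alpha$ then yields an irreducible perverse sheaf on $\Bunrel 1$.

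It remains to show that the extension by zero along the open embedding $i \colon \Bunrel 1 \incl \Bunpar^1$ preserves both properties. Since $i_!$ automatically lands in ${}^p D^{\leq 0}$, perversity amounts to the cohomological condition $i_Z^!(i_! \alpha_* \cL_E[1]) \in {}^p D^{\geq 0}(Z)$ on the closed complement $Z := \Bunpar^1 \setminus \Bunrel 1$; irreducibility then amounts to showing that $i_! \alpha_* \cL_E[1]$ has no nonzero perverse quotients supported on $Z$. Together these two properties give $i_! = i_{!*}$, delivering both perversity and irreducibility in one stroke.

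The main obstacle is this local analysis at $Z$. The complement decomposes into the Harder--Narasimhan loci of unstable bundles $\cO(1+m) \oplus \cO(-m)$ for $m \geq 1$, the finite set of bundles $(\cO(1) \oplus \cO, (\ell_x)_{x \in D})$ with at least two $\ell_x$ equal to $\cO(1)|_x$, and the split bundle $(\cO(1), \emptyset) \oplus (\cO, D)$; this enumeration already appeared in the proof of \cref{prop:cusp-forms-vanish-outside-relevant-locus}. The Harder--Narasimhan strata contribute only in positive codimension, so the core computation is at the finitely many isolated boundary points, where one performs Ext-computations in the local structure of $\Bunpar^1$ using the explicit stalk descriptions of $\cL_E$ at points above $D$ from \cref{lem:stalks-of-intermediate-extension-of-e}, transported through $\alpha$.
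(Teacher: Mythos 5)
Your reduction to $d=1$ and the identification of $\Aut_E^1|_{\Bunrel 1}$ with the irreducible perverse sheaf $\alpha_*\cL_E[1]$ are fine. The problem is that the entire content of the theorem lies in the step you defer: you correctly isolate the cleanness statement $j^{\rel}_! \,\alpha_*\cL_E[1] = j^{\rel}_{!*}\,\alpha_*\cL_E[1]$ as "the main obstacle," but you do not prove it, and it is not a routine local analysis. The boundary $Z = \Bunpar^1\setminus\Bunrel 1$ contains infinitely many strata (the residual gerbes of $(\cO(1+m),\cdot)\oplus(\cO(-m),\cdot)$ for all $m\geq 1$), each of increasingly negative dimension as a substack, and "positive codimension" does not by itself bound the perverse amplitude of $i_Z^{!}\,j^{\rel}_!\,\alpha_*\cL_E[1]$ there. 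The required vanishing is a genuine cohomological input specific to $E$: it uses that $E$ is irreducible with unipotent monodromy, the consequent vanishing $\rH^*(\bP^1,\bar j_{!*}E[1])=0$ (\cref{prop:cohomology-groups-of-intermediate-extension-are-zero}, which rests on the Euler-characteristic computation of \cref{lem:euler-characteristic-of-e-is-zero}), and the $\rH_{\mathrm c}^*$-vanishing on the loci $\cP_x$ (\cref{lem:stalks-of-intermediate-extension-of-e}). For a general local system the extension by zero is neither perverse nor irreducible, so an argument that never invokes these inputs cannot close. There is also a sign error in your criterion: since $\Hom(j^{\rel}_!F, i_{Z*}G)=\Hom(F, j^{\rel,*}i_{Z*}G)=0$, the extension by zero never has perverse \emph{quotients} supported on $Z$; what must be excluded (beyond perversity) are perverse \emph{subobjects} supported on $Z$, i.e.\ one needs ${}^p\!H^0(i_Z^!\,j^{\rel}_!\,\alpha_*\cL_E[1])=0$ in addition to $i_Z^!(\cdot)\in{}^pD^{\geq 0}$.

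For comparison, the paper avoids the direct boundary analysis on $\Bunpar^1$ altogether. It first establishes the presentation $\Aut_E^1 = \ave\, i_{!*}\tildeLaumon[1]$ (\cref{lem:aute1-via-fourier-transform} together with \cref{lem:extension-by-zero-is-normal-extension}), i.e.\ realizes $\Aut_E^1$ as $\bR q_!$ of a smooth pullback of an intermediate extension on $\Bunpar^2\times\barcohpar$; perversity then follows from properness of $q$ and the decomposition theorem, and irreducibility from that of $E$. The hard vanishing is carried out upstairs, on $\closurepEtilde\setminus\{\pEtilde\}$, where the relevant fibers of $q$ are explicit $\bP^1$'s and $\bA^1$'s mapping to $\barcohpar$ and the cohomological properties of $\cL_E$ from \cref{sec:cohomological-properties-of-e} apply directly. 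If you want to pursue your direct route, you would need to supply the analogue of that computation stratum by stratum on $Z$; as written, the proposal is a plan rather than a proof.
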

We conclude the proof of this theorem on page
\pageref{proof-of-thm:irreducibility-perversity-of-aute}.

We denote by $\ave$ the functor
\begin{equation*}
  \ave := \bR q_! p^* \colon \boundedderivcat(\Bunpar^2 \times \barcohpar, \bQ_\ell) \to \boundedderivcat(\Bunpar^1, \bQ_\ell)
\end{equation*}
where $p,q$ are the maps in the Hecke correspondence
\begin{equation*}
  \Bunpar^2 \times \barcohpar \xleftarrow{p} \cH \xrightarrow{q} \Bunpar^1.
\end{equation*}


We denote by $\BAut(\pEtilde) \subset \Bunpar^2$ the residual gerbe associated to $\pEtilde \in \Bunpar^2(\bF_q)$
and denote by
\begin{equation*}
  i \colon \BAut(\pEtilde) \times \barcohpar \incl \Bunpar^2 \times \barcohpar 
\end{equation*}
the inclusion.
By $\tildeLaumon$ we denote the pullback of $\cL_E$ along
the projection $\BAut(\pEtilde) \times \barcohpar \to \barcohpar$.

\begin{lemma}
  \label{lem:aute1-via-fourier-transform}
  We have
  $\Aut_E^1 = \ave (\bR i_! \tildeLaumon[1])$.
\end{lemma}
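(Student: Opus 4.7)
The plan is to apply base change to rewrite $\ave(\bR i_! \tildeLaumon[1])$ as a pushforward from the fiber $Z := p^{-1}(\BAut(\pEtilde) \times \barcohpar)$, and then decompose $Z$ into the open locus $\cHcErel$ of generic modifications and its closed complement $Z'$ (where the surjection $\pEtilde \surj \pT$ factors through one of the direct summands of $\pEtilde$). First, I form the Cartesian square
\begin{equation*}
  \begin{tikzcd}
    Z \ar[r, "\tilde i"] \ar[d, "\tilde p"'] & \cH \ar[d, "p"] \\
    \BAut(\pEtilde) \times \barcohpar \ar[r, "i"] & \Bunpar^2 \times \barcohpar.
  \end{tikzcd}
\end{equation*}
Since $\tildeLaumon = \pi^* \cL_E$ for the projection $\pi$ to $\barcohpar$, base change yields $p^* \bR i_! \tildeLaumon = \bR \tilde i_! (\pi')^* \cL_E$, where $\pi' \colon Z \to \barcohpar$ sends $(\pF \subset \pEtilde \surj \pT)$ to $[\pT]$. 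Applying $\bR q_!$ gives $\ave(\bR i_! \tildeLaumon[1]) = \bR f_!(\pi')^* \cL_E[1]$ for $f := q \circ \tilde i \colon Z \to \Bunpar^1$, the map sending the sequence to $\pF$.

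Next, letting $j' \colon \cHcErel \incl Z$ denote the open inclusion and $i' \colon Z' \incl Z$ its closed complement, I push the open-closed distinguished triangle on $Z$ forward along $\bR f_!$ to obtain
\begin{equation*}
  \bR (f j')_!(\pi'|_{\cHcErel})^* \cL_E \to \bR f_!(\pi')^* \cL_E \to \bR (f i')_!(\pi'|_{Z'})^* \cL_E \xrightarrow{+1}.
\end{equation*}
By \cref{lem:chcerel-to-bunrel1-isom} and \cref{lem:prel-is-isomorphism}, $f \circ j' = k \circ q^{\rel}$ for the open inclusion $k \colon \Bunrel 1 \incl \Bunpar^1$, and $\pi'|_{\cHcErel} = \mathrm{pr}_2 \circ p^{\rel}$. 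Since $\alpha = q^{\rel} \circ (p^{\rel})^{-1}$ by \cref{defn:alpha} (using the canonical identification of $\cohpar$ with $\BAut(\pEtilde)/\Gm \times \barcohpar$ under which $\mathrm{pr}_2$ becomes the rigidification), the left term simplifies to $k_! \alpha_* \cL_E$, which after the shift $[1]$ is exactly $\Aut_E^1$ by \cref{defn:aut-e}.

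The main remaining task, and the principal obstacle, is to show that the rightmost term $\bR (f i')_!(\pi'|_{Z'})^* \cL_E$ vanishes. Its support is automatically contained in $\Bunpar^1 \setminus \Bunrel 1$: as observed in \cref{sec:lower-modifications-of-petilde}, non-generic subbundles of $\pEtilde$ are either isomorphic to $(\cO(1), \emptyset) \oplus (\cO, D)$ or have underlying vector bundle $\cO(2) \oplus \cO(-1)$, none of which lies in the relevant locus. To establish the fiberwise vanishing, I would stratify $Z'$ according to which summand the surjection factors through; the two strata are disjoint, since a nonzero surjection onto a length one parabolic torsion sheaf cannot vanish on both summands. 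On each stratum, the parabolic constraints pin down the isomorphism type of $\pT$, so the image of $\pi'|_{Z'}$ lies in a section of $\Supp \colon \barcohpar \to \bP^1$, isomorphic to $\bP^1$: passing through $k_y^{(1,0)}$ for $y \in D$ in the first case, and through $k_y^{(0,1)}$ in the second. By \cref{lem:stalks-of-intermediate-extension-of-e}(1), the pullback of $\cL_E$ along either such section is $\bar j_{!*} E$ on $\bP^1$, whose shifted cohomology vanishes by \cref{prop:cohomology-groups-of-intermediate-extension-are-zero}. Combined with the structure of the (possibly stacky) fibers of $f|_{Z'}$ via the projection formula, this yields the required vanishing.
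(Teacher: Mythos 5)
Your proof is correct and follows essentially the same route as the paper's: proper base change to the fiber of $p$ over $\BAut(\pEtilde) \times \barcohpar$, identification of the generic part with $\Aut_E^1$ via the isomorphisms $p^{\rel}$ and $q^{\rel}$, and vanishing of the non-generic contribution by recognizing its image in $\barcohpar$ as sections of the support map along which $\cL_E$ pulls back to $\bar j_{!*}E$, whose cohomology vanishes. The only (immaterial) difference is organizational: you decompose the source $Z = \cHcE$ by an open--closed distinguished triangle, whereas the paper computes the stalks of $\ave \bR i_!\tildeLaumon$ at the two non-relevant points of the target.
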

\begin{proof}
  Let $\cHcE \subset \cH$ be 
  the substack defined as the pullback of $p \colon \cH \to \Bunpar^2 \times \barcohpar$
  along the inclusion $i$
  and let $p_{\pEtilde}$ and $q_{\pEtilde}$ denote the maps following maps in the commutative diagram
  \begin{equation*}
    \begin{tikzcd}
      \BAut(\pEtilde) \times \barcohpar \ar[d, incl, "i"]
      & \cHcE \ar[l, "p_{\pEtilde}"] \ar[rd, "q_{\pEtilde}"] \ar[d, incl] \ar[ld, phantom, "\square"]
      & \\
      \Bunpar^2 \times \barcohpar
      & \cH \ar[l, "p"] \ar[r, swap, "q"]
      & \Bunpar^1
    \end{tikzcd}
  \end{equation*}
  where the left square is Cartesian.
  By applying the proper base change theorem to the left square,
  we find
  \begin{equation}
    \label{eq:geometric-chapter-aut-e-1-via-proper-base-change}
     \ave \bR i_! \tildeLaumon = \bR q_{\pEtilde, !} p_{\pEtilde}^* \tildeLaumon.
  \end{equation}

  In \cref{sec:identifying-cohpar-with-bunrel},
  we defined a substack $\cHcErel \subset \cH$;
  we now note that in fact $\cHcErel = q_{\pEtilde}^{-1}(\Bunrel 1) \subset \cHcE$.
  We showed that the restrictions of $q_{\pEtilde}$ to $\cHcErel$ is an isomorphism
  and that the restriction of $p_{\pEtilde}$ to $\cHcErel$
  composed with
  $\BAut(\pEtilde) \times \barcohpar \to \BAut(\pEtilde)/\Gm \times \barcohpar$
   is an isomorphism.
   The map $\alpha \colon \cohpar \isom \Bunrel 1$ was defined using these isomorphisms
   and it therefore follows from
   \cref{eq:geometric-chapter-aut-e-1-via-proper-base-change}
   that
   $(\ave \bR i_! \tildeLaumon[1])|_{\Bunrel 1} = \Aut_E|_{\Bunrel 1}$.

   It remains to prove that
   $\ave \bR i_! \tildeLaumon$
   is zero outside of $\Bunrel 1$.
  \Cref{eq:geometric-chapter-aut-e-1-via-proper-base-change}
  tells us that $\Aut_E^1$ is supported on the image of $\cHcE \to \Bunpar^1$.
  The results of \cref{sec:relevant-locus}
  show that this images contains only two points outside of $\Bunrel 1$:
  $(\cO(1), \emptyset) \oplus (\cO, D)$
  and $(\cO(2), \emptyset) \oplus (\cO(-1), D)$.
  It therefore suffices to prove that the fiber of
  $\ave \bR i_! \tildeLaumon$ at each of these points is zero.

  For each $y \in \bP^1$,
  there are exactly two non-generic orbits in the modifications of $\pEtilde$ at $y$;
  the corresponding subbundles of $\pEtilde$ are
  $(\cO(1), \emptyset) \oplus (\cO, D)$
  and $(\cO(2), \emptyset) \oplus (\cO(-1), D)$.
  As a result,
  denoting by $\pE$ one of these two points,
  we find
  there is an isomorphism
  $\beta \colon q_{\pEtilde}^{-1}(\pE) \isom \BAut(\pEtilde) \times \bP^1$
  and a section $s \colon \bP^1 \to \barcohpar$
  of the support map,
  that fit into the commutative diagram
  \begin{equation*}
    \begin{tikzcd}
      q_{\pEtilde}^{-1}(\{\pE\}) \ar[rr, isom, swap, "\beta"]
      \ar[rd, swap, "p_{\pEtilde}"]
      &&
      \BAut(\pEtilde) \times \bP^1
      \arrow{ld}{\id \times s} \\
      & \BAut(\pEtilde) \times \barcohpar
    \end{tikzcd}
  \end{equation*}
  This shows
  \begin{equation*}
    (\ave \bR i_! \tildeLaumon)|_{\pE}
    = \rH_{\mathrm c}^\ast(\BAut(\pEtilde) \times \bP^1, (id \times s)^* \tildeLaumon)
  \end{equation*}
  and this is zero by
  \cref{prop:cohomology-groups-of-intermediate-extension-are-zero}.
\end{proof}

\begin{prop}
  \label{lem:extension-by-zero-is-normal-extension}
  The natural map
  $\ave \bR i_! \tildeLaumon
  \to \ave \bR i_* \tildeLaumon$
  is an isomorphism.
\end{prop}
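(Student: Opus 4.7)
The plan is to show that the cone of the natural map $\bR i_! \tildeLaumon \to \bR i_* \tildeLaumon$ is killed by $\ave$. Since $i = i_{\pEtilde} \times \id_{\barcohpar}$ and $\tildeLaumon$ is pulled back from $\barcohpar$, this cone factors as $C \boxtimes \cL_E$, where $C$ is the cone of $\bR i_{\pEtilde,!} \bQ_\ell \to \bR i_{\pEtilde,*} \bQ_\ell$ on $\Bunpar^2$. By construction $C$ is supported on the boundary $\closurepEtilde \setminus \{\pEtilde\}$, so the task becomes showing $\ave(C \boxtimes \cL_E) = 0$.

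First, I would give an explicit description of $\closurepEtilde \setminus \{\pEtilde\}$ inside $\Bunpar^2$. The underlying bundle $\cO(2) \oplus \cO$ degenerates to the more unstable bundles $\cO(2+k) \oplus \cO(-k)$ for $k \geq 1$, and the parabolic flags of $\pEtilde$ (which come from the unique section $\cO \hookrightarrow \cO(2)$) degenerate correspondingly; the boundary is a union of the resulting unstable strata, each carrying a prescribed flag configuration. Second, for each $\pE' \in \Bunpar^1$, I would analyze the fiber of $q^{-1}(\pE')$ lying over this boundary, which parametrizes upper modifications $\pE' \subset \pF$ with $\pF$ in the boundary. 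By analogy with the non-generic fibers computed in the proof of \cref{lem:aute1-via-fourier-transform}, I expect each component of this fiber to be (a $\BAut(\pEtilde)$-gerbe over) a $\bP^1$ that maps into $\barcohpar$ through a section of the support map.

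Once the fibers are in hand, proper base change identifies the stalk $\bigl(\ave(C \boxtimes \cL_E)\bigr)_{\pE'}$ with cohomology groups of the form $\rH^\ast(\bP^1, \bar{j}_{!*} E[1])$, which vanish by \cref{prop:cohomology-groups-of-intermediate-extension-are-zero}. This is exactly the same cohomological input that made $\ave \bR i_! \tildeLaumon$ vanish at the two non-generic points $(\cO(1),\emptyset) \oplus (\cO, D)$ and $(\cO(2),\emptyset) \oplus (\cO(-1), D)$ in the proof of \cref{lem:aute1-via-fourier-transform}. The main obstacle is step one: precisely identifying $\closurepEtilde \setminus \{\pEtilde\}$ and tracking how the four flags of $\pEtilde$ specialize along each instability direction. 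Once that geometry is explicit, the vanishing is a reprise of the earlier computation, and the proposition follows.
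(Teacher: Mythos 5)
Your overall strategy is the paper's: fit $\bR i_!\tildeLaumon \to \bR i_*\tildeLaumon$ into a triangle whose cone is $F \boxtimes \tildeLaumon$ with $F$ supported on $\closurepEtilde \setminus \{\pEtilde\}$, and show $\ave(F \boxtimes \tildeLaumon) = 0$. But there is a genuine gap in your step analyzing the fibers. You claim each component of $q^{-1}(\pE') \cap p^{-1}(\pE)$ for $\pE$ in the boundary maps into $\barcohpar$ through a section of the support map, so that the vanishing reduces to $\rH^*(\bP^1, \bar j_{!*}E[1]) = 0$ (\cref{prop:cohomology-groups-of-intermediate-extension-are-zero}). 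That is not the case. For a boundary point $\pE = (\cL, I) \oplus (\cM, D \setminus I)$, the space of length one lower modifications is $\bP(\cE) \cup_D \bP^1(\cE^{(-1,D)})$, and besides the piece given by a section of $\Supp$ it contains (i) the affine complement of that section, and (ii) four pieces, each an $\bA^1$ lying \emph{entirely over a single point} $x \in D$ and mapping to $\barcohpar$ via $k_x^{(\dash,0)}$ or $k_x^{(0,\dash)}$ (these are the modifications that only shift the parabolic flag at $x$). For (ii) the required vanishing is $\rH_c^*\bigl(\bA^1, (k_x^{(\dash,0)})^* j_{!*}E\bigr) = 0$, i.e., the cancellation between the stalks of $j_{!*}E$ at $k_x^0$ and at $k_x^{(1,0)}$ recorded in \cref{lem:stalks-of-intermediate-extension-of-e}(2) and \cref{rmk:cohomological-property-of-e-via-maps}. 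This does not follow from \cref{prop:cohomology-groups-of-intermediate-extension-are-zero}; it is a separate, essential input (it is where the local structure of $j_{!*}E$ over $D$, hence the unipotent monodromy hypothesis, enters), and without it the pieces of the fiber over $D$ are not controlled.

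Separately, the paper sidesteps what you call your main obstacle. Rather than explicitly describing $\closurepEtilde \setminus \{\pEtilde\}$ and tracking flag specializations, it argues softly: every boundary point has automorphism group of dimension at least $3$ (the boundary has strictly smaller dimension than $\{\pEtilde\}$), a length one lower modification drops this dimension by at most $1$, and any parabolic bundle in $\Bunpard 2$ with at least $2$-dimensional automorphism group splits as a direct sum of parabolic line bundles. So all you need to know about a boundary point and its modifications is that they are sums of parabolic line bundles $(\cL, I) \oplus (\cM, D \setminus I)$, which makes the fiber analysis above tractable without any explicit degeneration picture. You should adopt both this reduction and the additional vanishing input before the argument closes.
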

\begin{proof}
  We denote by $\closurepEtilde \subset \Bunpar^1$
  the closure of $\pEtilde$.
  We prove that for every $F \in \boundedderivcat(\Bunpar^1, \bQ_\ell)$
  supported on $\closurepEtilde \setminus \{\pEtilde\}$,
  we have
  \begin{equation}
    \label{eq:claim-closure-doesnt-matter}
    \ave(F \boxtimes \tildeLaumon) = 0.
  \end{equation}
  The proposition then follows by applying the derived functor $\ave$
  to the distinguished triangle
 \begin{equation*}
   \bR i_! \tildeLaumon \to \bR i_* \tildeLaumon \to F \boxtimes \tildeLaumon \xrightarrow{+1}.
 \end{equation*}

 We can reduce the proof of \cref{eq:claim-closure-doesnt-matter} for all $F$ supported on
 $\closurepEtilde \setminus \{\pEtilde\}$
 to the case where $F$ is a skyscraper sheaf.
 Indeed,
 for every $\pE \in \Bunpar^1$,
 there are finitely many $\pE_i \in \Bunpar^2$ with
 $\pE \in q(p^{-1}(\pE_i))$;
 hence, to prove $\ave (F \boxtimes \tildeLaumon)|_{\pE} = 0$,
 we can construct a finite stratification of $q^{-1}(\pE)$
 by intersecting it with the fibers $p^{-1}(\{\pE_i\} \times \barcohpar)$.
 The compact cohomology of the stratum corresponding to $\pE_i$
 vanishes if $\ave(F_i \boxtimes \tildeLaumon)|_{\pE}$ is zero for every $F_i$
 supported on $\pE_i$ ---
 this completes the reduction step.

 \begin{lemma}
   Let $\pE \in \closurepEtilde \setminus \{\pEtilde\}$.
   Then $\pE$ and all its length 1 lower modifications are
   direct sums of parabolic line bundles.
 \end{lemma}
 \begin{proof}
   Since the boundary of $\{\pEtilde\}$ has dimension smaller than the dimension of $\{\pEtilde\}$,
   every $\pE \in \closurepEtilde \setminus \{\pEtilde\}$ has an automorphism group
   of dimension at least 3.
   Taking a lower modification of a bundle reduces the dimension of its automorphism group by at most one,
   so the proof of the lemma is concluded by remarking that
   all parabolic vector bundles $\pF$ in $\Bunpard 2$
   with an automorphism group of dimension at least 2
   are direct sums of line bundles.
   If the underlying bundle $\cF$ is $\cO(1) \oplus \cO(1)$,
   this follows from the fact that the automorphisms of $\cO(1) \oplus \cO(1)$
   act as M\"obius transformations on the flags
   and only scalar multiples of the identity fix three distinct points in $\bP^1$.
   If $\cF$ is $\cO(2 + n) \oplus \cO(-n)$ for some $n \in \bZ_{\geq 0}$,
   we can take an automorphism of the form
   $\begin{pmatrix}
     \lambda & \sigma \\ 0 & 1
   \end{pmatrix}$
   with $\lambda \in \bF_q^*$
   and $\sigma \colon \cO(-n) \to \cO(2 + n)$
   and then
   $\begin{pmatrix}
     1 & \frac 1 {\lambda - 1} \sigma \\ 0 & 1
   \end{pmatrix}$
   induces an isomorphism from $\pF$
   to a direct sum of parabolic line bundles.
   \end{proof}

 Let $F$ be a skyscraper sheaf supported on $\pE \in \closurepEtilde \setminus \{\pEtilde\}$.
 By the lemma,
 we can write $\pE = (\cL, I) \oplus (\cM, D \setminus I)$
 for line bundles on $\bP^1$
$\cL, \cM$
 and $I \subset D$.
 Every length one modification of $\pE$ is then of one of the following forms:
 \begin{align*}
   (\cL, I) &\oplus (\cM(-1), D \setminus I),  && \\
   (\cL(-1), I) &\oplus (\cM, D \setminus I), && \\
   (\cL(-1), I \cup \{x\}) &\oplus (\cM, D \setminus I \setminus \{x\}) &&  \text{for $x \in D \setminus I$},\\
   \text{or} \quad
   (\cL, I \setminus \{x\}) &\oplus (\cM(-1), D \setminus I \cup \{x\})  && \text{for $x \in I$}.
 \end{align*}

 Let $\pF$ denote one of these length 1 lower modifications.
 Then
 \begin{equation}
   \label{eq:proof-closure-doesnt-matter-express-fiber}
   (\ave (F \boxtimes \tildeLaumon))|_{\pF}
   = \rH^*_c(q^{-1}(\pF) \cap p^{-1}(\pE), p^* (F \boxtimes \tildeLaumon)).
 \end{equation}
 The map
 \begin{equation}
   \label{eq:proof-closure-doesnt-matter-map-from-fiber}
   q^{-1}(\pF) \cap p^{-1}(\pE) \to \barcohpar
 \end{equation}
 induced by $p \colon \cH \to \Bunpar^2 \times \barcohpar$
 is
 \begin{itemize}
 \item a section $\bP^1 \xrightarrow{s} \barcohpar$ of the support map
   if $\pF \cong (\cL, I) \oplus (\cM(-1), D \setminus I)$;
 \item
   the map $\cM^\vee \otimes \cL \to \bP^1 \xrightarrow{s} \barcohpar$,
   where $s \colon \bP^1 \to \barcohpar$ is again a section of the support map,
   if $\pF \cong (\cL(-1), I) \oplus (\cM, D \setminus I)$; or otherwise
 \item
   one of the two maps
   $k_x^{(\dash,0)}, k_x^{(0,\dash)} \colon \bA^1 \to \barcohpar$
   (\cref{rmk:cohomological-property-of-e-via-maps})
   for some $x \in D$.
 \end{itemize}

 Another way to describe the above,
 is to realize that the modifications of $\pF$
 are classified by a space isomorphic to $\bP(\cF) \cup_{D} \bP^1(\cF^{(-1,D)})$.
 The first map corresponds to a section of $\bP^1(\cF) \to \bP^1$;
 the second to the complement of this section;
 and the remaining 4 disjoint pieces correspond to the last maps.

It follows from
the cohomological properties of $E$ given in
\cref{lem:stalks-of-intermediate-extension-of-e}
and
\cref{rmk:cohomological-property-of-e-via-maps},
that the pullback of $\tildeLaumon$ along any of these maps
has vanishing compact cohomology.
Hence,
the expression in \cref{eq:proof-closure-doesnt-matter-express-fiber}
vanishes,
which completes the proof.
\end{proof}

\begin{cor}
  $\Aut_E^1 = \ave i_{!*} \tildeLaumon[1]$.
\end{cor}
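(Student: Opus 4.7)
The plan is to combine \cref{lem:aute1-via-fourier-transform} with the vanishing result established in \cref{lem:extension-by-zero-is-normal-extension}, by inserting the intermediate extension between $\bR i_!$ and $\bR i_*$. By \cref{lem:aute1-via-fourier-transform} we already have $\Aut_E^1 = \ave(\bR i_! \tildeLaumon[1])$, so it suffices to show that the natural morphism
\begin{equation*}
  \ave(\bR i_! \tildeLaumon[1]) \to \ave(i_{!*} \tildeLaumon[1])
\end{equation*}
is an isomorphism.

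First I would factor $i = (\iota \times \id) \circ (j' \times \id)$ where $\iota \colon \closurepEtilde \incl \Bunpar^2$ is the closed inclusion and $j' \colon \BAut(\pEtilde) \incl \closurepEtilde$ is the open inclusion. Then there is the standard factorization
\begin{equation*}
  \bR i_! \tildeLaumon[1] \to i_{!*} \tildeLaumon[1] \to \bR i_* \tildeLaumon[1]
\end{equation*}
whose composition is the adjunction map and whose two cones are supported on $(\closurepEtilde \setminus \{\pEtilde\}) \times \barcohpar$, since all three terms agree on the open stratum $\BAut(\pEtilde) \times \barcohpar$. The key observation is that the proof of \cref{lem:extension-by-zero-is-normal-extension} established the stronger statement that $\ave(F \boxtimes \tildeLaumon) = 0$ for every complex $F$ supported on $\closurepEtilde \setminus \{\pEtilde\}$, via a reduction to skyscraper sheaves and an explicit computation of the fibers using the cohomological properties of $\cL_E$ from \cref{lem:stalks-of-intermediate-extension-of-e}. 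Applying this vanishing to both cones yields that both arrows above become isomorphisms after applying $\ave$, proving the corollary.

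The main obstacle is verifying that each cone has the box-product form $F \boxtimes \cL_E$ needed to invoke the argument from \cref{lem:extension-by-zero-is-normal-extension}, because the perverse truncation underlying $i_{!*}$ does not a priori respect external tensor products. To handle this I would use that $\tildeLaumon = \bQ_\ell \boxtimes \cL_E$ by construction and that pushforward along the closed immersion $\iota \times \id$ as well as intermediate extension along the open immersion $j' \times \id$ factor through the first factor, so that $i_{!*} \tildeLaumon[1] = (\iota_* j'_{!*} \bQ_\ell) \boxtimes \cL_E[1]$ up to shifts. The cones then acquire the desired box-product form automatically, and the vanishing follows.
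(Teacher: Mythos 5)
Your proof is correct and follows essentially the same route as the paper: the paper deduces the corollary in two lines from \cref{lem:aute1-via-fourier-transform} together with \cref{lem:extension-by-zero-is-normal-extension}, whose proof already establishes the vanishing $\ave(F \boxtimes \tildeLaumon) = 0$ for $F$ supported on $\closurepEtilde \setminus \{\pEtilde\}$. You merely make explicit the step the paper leaves implicit, namely that this vanishing applies to the cones of $\bR i_! \tildeLaumon \to i_{!*}\tildeLaumon \to \bR i_*\tildeLaumon$ because $\tildeLaumon = \bQ_\ell \boxtimes \cL_E$ and all the relevant functors act only on the first factor.
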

\begin{proof}
  It follows from
  $\ave \bR i_! \tildeLaumon = \ave \bR i_{*} \tildeLaumon$
  (\cref{lem:extension-by-zero-is-normal-extension}) 
  that we have
  $\ave \bR i_! \tildeLaumon = \ave i_{!*} \tildeLaumon$.
  This is equal to $\Aut_E^1$ by 
  \cref{lem:aute1-via-fourier-transform}.
\end{proof}

\begin{proof}[Proof of \cref{thm:irreducibility-perversity-of-aute}]
  \label{proof-of-thm:irreducibility-perversity-of-aute}
  The irreducibility follows from the irreducibility of $E$
  and hence of $\cL_E$ and its pushforward along $\alpha$.
  To prove that $\Aut_E^1$ is perverse,
  we note that $p^* i_{!*} \tildeLaumon$ is perverse,
  because both $i_{!*}$ and $p^*$ preserve perversity (up to shift;
  $p$ is smooth);
  then by applying the decomposition theorem to $\bR q_!$,
  we find that
  $\bR q_! p^* i_{!*} \tildeLaumon[1] = \Aut_E^1$
  is also perverse.
\end{proof}

\section{Decomposition of the Hecke transform and compactification}
\label{sec:decomposition-of-hecke-transform-and-compactification}

\begin{prop}
  \label{prop:hecke-transform-decomposes}
  The complex $\heckeglobal \Aut_E^0$ decomposes
  as a direct sum of shifted simple perverse sheaves.
\end{prop}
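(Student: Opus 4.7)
My plan is to apply the Beilinson--Bernstein--Deligne decomposition theorem to a proper compactification of the map $p$. Since $\heckeglobal = p_! q^*$ and $p$ is only proper away from $\Supp^{-1}(D) \subset \barcohpar$, a direct application is not possible. The first step is therefore to construct a compactification $\bar{\cH}$ of $\cH$ together with a proper extension $\bar{p}\colon \bar{\cH} \to \Bunpar \times \barcohpar$ of $p$, and an extension $\bar{q}\colon \bar{\cH} \to \Bunpar$ of $q$. A natural candidate for $\bar{\cH}$ classifies generalized short exact sequences $0 \to \pF \to \pE \to \pT \to 0$ that permit controlled degenerations over the tamely ramified divisor $D$, in line with standard compactifications of Hecke stacks in the ramified setting.

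Next, I would verify that $q^* \Aut_E^0$ is, up to a shift, a simple pure perverse sheaf. By \cref{thm:irreducibility-perversity-of-aute}, $\Aut_E^0$ is an irreducible perverse sheaf; because $q$ is smooth of constant relative dimension, its pullback $q^* \Aut_E^0$ remains perverse (up to shift) and simple on each connected component of $\cH$. Purity is inherited from $E$: the local system $E$ is pure, hence so is $\cL_E = j_{!*}E$ by Deligne's theorem, hence so is $\alpha_* \cL_E$ since $\alpha$ is an isomorphism, and finally so is $\Aut_E^0$, which differs from $\alpha_* \cL_E[1]$ only by the elementary Hecke isomorphism $T_\infty^{-1}$ and a twist by the constant local system $\Ecoeff$.

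I would then extend $q^* \Aut_E^0$ to a simple pure perverse sheaf $\tilde{F}$ on $\bar{\cH}$ via intermediate extension along the open inclusion $\iota\colon \cH \hookrightarrow \bar{\cH}$, and identify $\bar{p}_* \tilde{F}$ with $p_! q^* \Aut_E^0 = \heckeglobal \Aut_E^0$ (up to shift). This identification amounts to showing that the boundary stratum $\bar{\cH} \setminus \cH$ contributes nothing to the pushforward, in a manner parallel to \cref{lem:extension-by-zero-is-normal-extension}: the relevant fibers of $\bar p$ restricted to the boundary will fit into a small catalogue of strata, each of which is covered by the cohomological vanishing statements for $j_{!*}E$ established in \cref{sec:cohomological-properties-of-e} (notably \cref{lem:stalks-of-intermediate-extension-of-e} and \cref{rmk:cohomological-property-of-e-via-maps}). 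Once this is done, the decomposition theorem applied to $\bar{p}_* \tilde{F}$, which is the proper pushforward of a pure perverse sheaf, yields a decomposition as a direct sum of shifted simple perverse sheaves, proving the proposition.

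The main obstacle is the construction of the compactification together with the verification that the boundary does not contribute. Once a geometrically natural $\bar{\cH}$ is in place, the boundary vanishing reduces to the cohomology of the restriction of $\tilde F$ to the boundary fibers of $\bar p$, and these are precisely of the form (sections of $\Supp$, affine lines of the form $k_x^{(\dash,0)}$ and $k_x^{(0,\dash)}$, and vector bundles over $\bP^1$) for which \cref{sec:cohomological-properties-of-e} supplies the required vanishing.
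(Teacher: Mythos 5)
Your proposal follows essentially the same route as the paper: compactify $p$ to a proper $\bar p \colon \cHcomp \to \Bunpar^1 \times \barcohpar$ (the paper's $\cHcomp$ allows the middle term $\pE$ to acquire a length-one torsion part supported on $D$), show the clean-extension statement $\bR j_! q^* \Aut_E^0 = j_{!*} q^* \Aut_E^0$ using the cohomological vanishing of $j_{!*}E$ from \cref{sec:cohomological-properties-of-e}, and then apply the decomposition theorem to the proper pushforward of the resulting (shifted, pure, simple) perverse sheaf. Your explicit purity check is a detail the paper leaves implicit but is indeed needed for the decomposition theorem; otherwise the argument matches \cref{prop:clean-extension-for-compactification} and the surrounding construction.
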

\begin{proof}[Proof of
  \cref{prop:hecke-transform-decomposes},
  using \cref{sec:compactification} and \cref{sec:decomp-clean-extension-in-compactification}]
  In \cref{sec:compactification},
  we compactify the map $p \colon \cH \to \Bunpar^1 \times \barcohpar$
  (which fails to be proper over $\Bunpar^1 \times \Supp^{-1}(D)$)
  to a map $\bar p \colon \cHcomp \to \Bunpar^1 \times \barcohpar$
  (\cref{lem:barp-is-proper}),
  which gives us the following commutative diagram:
  \begin{equation}
    \label{eq:commutative-diagram-for-compactification-and-decomp}
    \begin{tikzcd}
      & \cHcomp \ar[ld, bend right, "\bar p"]
      & \\
      \Bunpar^1 \times \barcohpar
      & \cH \ar[l, "p"] \ar[r, swap,"q"] \ar[u, incl, "j"]
      & \Bunpar^0.
    \end{tikzcd}
  \end{equation}
  In \cref{sec:decomp-clean-extension-in-compactification},
  we show
  $\bR j_{!} q^* \Aut_E^0 = j_{!*} q^* \Aut_E^0$
  (\cref{prop:clean-extension-for-compactification}),
  from which it follows that
  \begin{equation}
    \heckeglobal \Aut_E^0
    := \bR p_! q^* \Aut_E^0
    = \bR \bar p_! j_{!*} q^* \Aut_E^0.
  \end{equation}
  Since $q$ is smooth,
  $j_{!*} q^* \Aut_E^0$
  is a shifted perverse sheaf,
  and we can apply the decomposition theorem to
  $\bR \bar p_!$
  to conclude the proof.
\end{proof}

\subsection{Compactification}
\label{sec:compactification}
Instead of $p \colon \cH \to \Bunpar^1 \times \barcohpar$,
we will compactify its dual
\begin{equation*}
  p^\vee \colon \cH = \cH^0 \to \Bunpar^{-1} \times \barcohpar,
  \qquad
  (\phi \colon \pE \surj \pT) \mapsto
  (\ker \phi, [\pT]).
\end{equation*}
We also introduce the dual of $q$, which is
\begin{equation*}
  q^\vee \colon \cH^0 \to \Bunpar^0, \qquad
  (\phi \colon \pE \surj \pT) \mapsto \pE.
\end{equation*}

These maps are dual in the following sense.
Dualization of vector bundles induces
a dualization of parabolic vector bundles
\begin{equation*}
  \mathrm{dual} \colon \Bunpar^d \isom \Bunpar^{-d},
  \qquad \pE \mapsto \dualvb{\pE}
\end{equation*}
and on length one coherent sheaves,
we define a duality
\begin{equation*}
  \mathrm{dual} \colon \cohpar \isom \cohpar,
  \qquad
  \pT \mapsto \iExt^1(\pT, \cO_{\bP^1}),
\end{equation*}
where $\iExt^1(\dash, \cO_{\bP^1})$ is applied degree-wise,
i.e., the degree $(i,x)$-part of $\iExt^1(\pT, \cO_{\bP^1})$
is $\iExt^1(\cT^{(-i,x)}, \cO_{\bP^1})$.
Note that there is a canonical isomorphism from
$\iExt^1(\cT^{(-i,x)}, \cO_{\bP^1})$ to
$\iHom(\cT^{(-i,x)}, \cO_{\Supp \cT}) \otimes \cO(\Supp \cT^{(-i,x)})$
(\cite[Lemma A.2]{DonagiPantev}).
These dualizing maps fit into the commutative diagram
\begin{equation*}
  \begin{tikzcd}[column sep=huge]
    \Bunpar^{- 1} \times \barcohpar \ar[d, isom, "\mathrm{dual}" swap]
    & \cH^0
    \arrow[swap]{l}{p^\vee}
    \arrow{r}{q^\vee}
    \ar[d, isom, "\mathrm{dual}" swap]
    & \Bunpar^{0} \ar[d, isom, "\mathrm{dual}" swap] \\
    \Bunpar^{1} \times \barcohpar
    & \cH  ^{1}
    \arrow{l}{p}
    \arrow[swap]{r}{q}
    & \Bunpar^{0}
  \end{tikzcd}
\end{equation*}
Hence, a compactification of $p^\vee$ provides us with a compactification of $p$.
The former is easier to describe.

We define an embedding $j \colon \cH \incl \cHcomp$ as follows.
We define a substack $\Bunextd 0$ of the moduli stack $\Coh_{2,D}^0$
of coherent parabolic sheaves of rank 2 and degree zero by
\begin{equation}
  \label{eq:definition-bunextd}
  \Bunextd 0 :=
  \stackbuilder{\pE \in \Coh_{2,D}^0}
  {\text{the torsion part $\pT$ of $\pE$} \\
    \text{is such that} \\
    \cT^{0} \oplus \cT^{(-1, D)} \\
    \text{has length 1}
  }.
\end{equation}
This substack contains $\Bunpar^0$.
The condition on the torsion part $\pT$
implies $\Supp \pT \subset D$.
We then define
\begin{equation}
  \label{eq:definition-cHcomp}
  \cHcomp :=
  \stackbuilder{0 \to \pF \to \pE \to \pT \to 0}
  {
    \pF \in \Bunpar^{-1}, \\
    \pE \in \Bunextd 0, \\
    \pT \in \cohpar
  },
\end{equation}
i.e., in $\cH$ we require the extension $\pE$ to lie in $\Bunpar^0$,
while here we only demand $\pE \in \Bunextd 0$.

\begin{lemma}
  \label{lem:barp-is-proper}
  The map
  \begin{equation*}
    \bar p \colon \cHcomp \to \Bunpar^{-1} \times \barcohpar,
    \qquad (\pF \incl \pE \surj \pT) \mapsto (\pF, [\pT])
  \end{equation*}
  is proper
  and $p^\vee = \bar p \circ j$.
\end{lemma}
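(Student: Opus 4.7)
The equality $p^\vee = \bar p \circ j$ is immediate from the definitions. Under $j$, a short exact sequence $0 \to \pF \to \pE \to \pT \to 0$ in $\cH$ with $\pE \in \Bunpar^0$ is simply viewed as an object of $\cHcomp$ via the inclusion $\Bunpar^0 \subset \Bunextd 0$, and applying $\bar p$ then yields $(\pF, [\pT])$, which is exactly $p^\vee$ of the original sequence. So the substantive content of the lemma lies in properness.

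For properness, the plan is to apply the valuative criterion. One first notes that $\bar p$ is of finite type, which follows from the finite-type nature of the three moduli stacks $\Bunpar^{-1}$, $\Bunextd 0$ and $\cohpar$ that enter the definition of $\cHcomp$; separatedness follows from the same valuative criterion argument that produces existence, so the essential point is uniqueness and existence of extensions. Let $R$ be a discrete valuation ring with fraction field $K$. Suppose we are given an $R$-point $(\pF_R, [\pT_R])$ of $\Bunpar^{-1} \times \barcohpar$ together with a $K$-lift consisting of an exact sequence $0 \to \pF_K \to \pE_K \to \pT_K \to 0$ in $\cHcomp(K)$. I need to produce a unique extension to an object of $\cHcomp(R)$, namely an exact sequence $0 \to \pF_R \to \pE_R \to \pT_R \to 0$ with $\pE_R \in \Bunextd 0$. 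After locally lifting $[\pT_R]$ to an honest $\pT_R \in \cohpar(R)$, the natural candidate for $\pE_R$ is the unique subfamily inside the ambient pushforward $\iota_* \pE_K$ (where $\iota \colon \Spec K \hookrightarrow \Spec R$ denotes the inclusion of the generic point) that simultaneously contains $\pF_R$ as a subsheaf and maps onto $\pT_R$, compatibly with the given data over $K$.

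The main obstacle is to verify that this $\pE_R$ actually lies in $\Bunextd 0$, i.e.\ that the torsion part $\pT$ of $\pE_R$ satisfies the length bound $\mathrm{length}(\cT^0 \oplus \cT^{(-1,D)}) \leq 1$ imposed by \eqref{eq:definition-bunextd}. In the generic case, when the extension class in $\Ext^1(\pT_K, \pF_K)$ admits a genuine lift to an element of $\Ext^1(\pT_R, \pF_R)$, the middle term $\pE_R$ is automatically a parabolic vector bundle of degree $0$, hence lies in $\Bunpar^0 \subset \Bunextd 0$. When the class degenerates, one is forced to absorb the obstruction in the middle term as a controlled torsion summand concentrated at a point of $D$, and the definition of $\Bunextd 0$ is calibrated precisely to permit this single-point degeneration and no more; this is the mechanism by which the compactification absorbs the non-properness of $p^\vee$ over $\Bunpar^{-1} \times \Supp^{-1}(D)$. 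Uniqueness of the extension is then automatic: both $\pF_R$ and $\pT_R$ are fixed, so $\pE_R$ is determined by a saturation inside $\iota_* \pE_K$. An alternative, possibly cleaner, route would be to describe $\bar p$ directly as a stratified relative projective space over $\Bunpar^{-1} \times \barcohpar$, with fibers identified with projective completions of the corresponding $\Ext^1$-groups; projectivity of the fibers together with properness of the base factor $\barcohpar$ would then yield properness of $\bar p$ at once, without explicit recourse to the valuative criterion.
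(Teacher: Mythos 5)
Your identification $p^\vee = \bar p \circ j$ is fine, and the "alternative route" you sketch at the end is in fact the right one, but your main argument has a genuine gap at exactly the point you flag as "the main obstacle". In the valuative-criterion argument you never prove that the limiting middle term $\pE_R$ has special fibre in $\Bunextd 0$: the sentence "the definition of $\Bunextd 0$ is calibrated precisely to permit this single-point degeneration and no more" asserts the conclusion rather than arguing for it, and this is the entire content of the properness claim. Moreover your candidate for $\pE_R$ ("the unique subfamily inside $\iota_* \pE_K$ containing $\pF_R$ and mapping onto $\pT_R$") is not well defined as stated — you specify neither the surjection onto $\pT_R$ nor why such a subfamily exists or is unique — and for a morphism of stacks one must in any case allow finite extensions of $R$ and treat separatedness of the diagonal separately, which you defer to "the same argument" without detail.

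The paper's proof makes your alternative route precise and thereby avoids the valuative criterion entirely. The stack $\underline{\Ext}^1(\pT_\univ, \pF_\univ) \to \Bunpar^{-1} \times \cohpar$ is a rank $2$ vector bundle (note $\Hom(\pT, \pF) = 0$ since $\pF$ is torsion-free), and the key identification — which is precisely the fact your hand-waving skips — is that a class $\xi \in \Ext^1(\pT, \pF)$ has middle term in $\Bunextd 0$ if and only if $\xi \neq 0$; the split extension is excluded because its torsion part is all of $\pT$, violating the length condition in \eqref{eq:definition-bunextd}. Hence $\cHcomp$ is the complement of the zero section in this rank $2$ bundle, and replacing $\cohpar$ by its rigidification $\barcohpar$ quotients each punctured fibre $\bA^2 \setminus \{0\}$ by the scaling action of $\Gm$, so that $\bar p$ is the projectivization $\bP(\underline{\Ext}^1(\pT_\univ, \pF_\univ))$, a $\bP^1$-bundle, hence proper. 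Two corrections to your sketch of this route: the fibres are projectivizations of the $\Ext^1$-groups, not their projective completions (they are $\bP^1$, not $\bP^2$); and properness of $\barcohpar$ itself is irrelevant, since properness of $\bar p$ is a notion relative to its target — what does the work is the projective bundle structure.
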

\begin{proof}
    The moduli stack
  \begin{equation*}
    \underline{\Ext}^1(\pT_\univ, \pF_\univ) \to \Bunpar^{-1} \times \cohpar
  \end{equation*}
  is a vector bundle.
  The fibers of this map are $\bA^2$.
  The compactification $\cHcomp$ is the complement in $\underline{\Ext}^1(\pT_\univ, \pF_\univ)$ of the zero section
  \begin{equation*}
    \cHcomp =
    \underline{\Ext}^1(\pT_\univ, \pF_\univ) \setminus \Bunpar^{-1} \times \cohpar
  \end{equation*}
  and by mapping to $\barcohpar$ instead of $\cohpar$,
  we divide out the scaling,
  so that $\cHcomp$ is a projective bundle over $\Bunpar^{-1} \times \barcohpar$.
  The last statement is obvious from the definition.
\end{proof}

\subsection{Clean extension}
\label{sec:decomp-clean-extension-in-compactification}

In this section,
we prove the following proposition.
\begin{prop}
  \label{prop:clean-extension-for-compactification}
  Let $j \colon \cH \incl \cHcomp$ denote the inclusion.
  Then
  \begin{equation*}
    \bR j_{!} q^* \Aut_E^0 = j_{!*} q^* \Aut_E^0.
  \end{equation*}
\end{prop}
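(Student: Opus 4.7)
The plan is to prove the cleanness statement $\bR j_! q^* \Aut_E^0 = j_{!*} q^* \Aut_E^0$ by the standard criterion: it suffices to show that the stalks of $\bR j_* q^* \Aut_E^0$ vanish on the closed complement $\cHcomp \setminus \cH$. Indeed, the natural adjunction $\bR j_! q^* \Aut_E^0 \to \bR j_* q^* \Aut_E^0$ factors through $j_{!*} q^* \Aut_E^0$; if the right hand side vanishes on the complement, the same is true of all three, so each adjunction map is an isomorphism.

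First, I would locate the boundary geometrically. By the projective bundle description of $\cHcomp$ in the proof of \cref{lem:barp-is-proper} together with the defining condition \eqref{eq:definition-bunextd} of $\Bunextd 0$, a point of $\cHcomp$ lies outside $\cH$ exactly when the middle term $\pE$ of its classified extension has a non-zero torsion subsheaf, which by the definition of $\Bunextd 0$ is of length one and supported in $D$. Consequently $\bar p$ sends $\cHcomp \setminus \cH$ into $\Bunpar^{-1} \times \Supp^{-1}(D)$: for $\pT$ supported away from $D$ the entire $\bP^1$-fiber of $\bar p$ lies in $\cH$, and above $(\pF, [\pT])$ with $\Supp \pT = \{x\} \subset D$ the boundary is a small finite set of distinguished extension classes in that fiber.

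Next I would compute $(\bR j_* q^* \Aut_E^0)_{y_0}$ at a boundary point $y_0$ above $(\pF_0, [\pT_0])$ with $\Supp \pT_0 = \{x\} \subset D$. The strategy is to choose a smooth slice of $\barcohpar$ through $[\pT_0]$ of the form $k_x^{(\dash,0)}$ or $k_x^{(0,\dash)}$ from \cref{rmk:cohomological-property-of-e-via-maps}. Combining \cref{defn:aut-e} with the averaging description $\Aut_E^1 = \ave\, i_{!*}\tildeLaumon[1]$ of \cref{lem:aute1-via-fourier-transform}, and tracking the $T_\infty$-twist between $\Aut_E^0$ and $\Aut_E^1$, the stalk at $y_0$ can be expressed as a compactly supported cohomology group with coefficients in the pullback of $\cL_E$ along such an $\bA^1$-slice, tensored with harmless constant factors arising from the $\Bunpar^{-1}$ direction and the gerbe $\BAut(\pEtilde)$. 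The vanishing then follows directly from \cref{rmk:cohomological-property-of-e-via-maps}, which gives
\begin{equation*}
(k_x^{(\dash,0)})^* \cL_E \;=\; (k_x^{(0,\dash)})^* \cL_E \;=\; (\bar j_{!*} E)|_x \otimes \bR (\Gm \incl \bA^1)_* \bQ_\ell,
\end{equation*}
together with \cref{lem:stalks-of-intermediate-extension-of-e}(2), which asserts exactly the vanishing of compactly supported cohomology of $\cL_E$ along the substack $\cP_x$ cut out by these slices; this input in turn uses the unipotent monodromy assumption on $E$.

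The main obstacle is the middle step: matching a local model for $\cHcomp$ around a boundary point with one of the $\bA^1$-slices of \cref{rmk:cohomological-property-of-e-via-maps}. This requires unwinding the averaging construction of $\Aut_E$, keeping careful track of the $T_\infty$-twist that relates $\Aut_E^0$ to $\Aut_E^1$, and checking that the map induced by $q$ on a transverse slice through $y_0$ factors, up to a smooth base change irrelevant for cohomological vanishing, through $k_x^{(\dash,0)}$ or $k_x^{(0,\dash)}$. Once this geometric identification is set up, the cohomological vanishing from \cref{sec:cohomological-properties-of-e} finishes the proof.
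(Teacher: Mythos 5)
Your overall skeleton agrees with the paper's: reduce to the vanishing of $\bR j_* q^* \Aut_E^0$ on the boundary $\cHcomp \setminus \cH$, observe that this boundary lies over $\Bunpar^{-1} \times \Supp^{-1}(D)$, and feed the final computation into the cohomological vanishing results of \cref{sec:cohomological-properties-of-e}. However, the step you yourself flag as ``the main obstacle'' is precisely the substance of the proof, and your proposal does not resolve it. The stalk of $\bR j_*$ at a boundary point is the cohomology of a punctured neighbourhood, and there is no a priori reason it should be computable as compactly supported cohomology along an $\bA^1$-slice of $\barcohpar$; the slices $k_x^{(\dash,0)}, k_x^{(0,\dash)}$ of \cref{rmk:cohomological-property-of-e-via-maps} live in the base $\barcohpar$ and describe stalks of $j_{!*}E$, which is a different piece of geometry from a transverse slice to the boundary of $\cHcomp$.

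The paper supplies two inputs that you are missing. First, \cref{lem:compactified-hecke-stack-smooth-over-bunplus} shows that $j \colon \cH \incl \cHcomp$ is the smooth base change along $\bar q^\vee$ of the inclusion $j_+ \colon \Bunpar^0 \incl \Bunextd 0$, so the whole problem descends from the Hecke stack to $\Bunextd 0$ (note this uses the \emph{other} projection, not $\bar p$, which is where your local analysis is set up). Second, and crucially, after identifying $\Bunextdeven x0$ with the total space of the vector bundle $\cHthreeplus \to \Bun_{2,D\setminus\{x\}}^{-1} \times \BAut(k_x)$ whose zero section is exactly the boundary, the paper invokes the $\Gm$-equivariant contraction principle (\cite[Lemma 0.3]{heinloth2004}) to convert the stalk of $\bR j_{x,*}$ along the zero section into $\bR f_*$ along the entire bundle. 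Only then does the fiber become an honest cohomology group $\rH^*(\Ext^1(k_x, \cE^{\bullet,(-1,x)})\setminus\{0\}, \pi_x^*\Aut_E^0)$, which factors through $\bP(\cF|_x) \to \Bunpar^{-1}$ and vanishes by \cref{lem:stalks-of-intermediate-extension-of-e}. Without the vector bundle structure and the equivariance argument, the reduction of the $\bR j_*$-stalk to a global cohomology computation does not go through, so as written the proposal has a genuine gap at its central step.
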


We start with the following lemma.
\begin{lemma}
  \label{lem:compactified-hecke-stack-smooth-over-bunplus}
  The map
  \begin{equation*}
    \bar q^\vee \colon
    \cHcomp \to \Bunextd 0,
    \qquad (\pF \incl \pE \surj \pT) \mapsto \pE.
  \end{equation*}
  is smooth and surjective and fits into the Cartesian square
  \begin{equation*}
    \begin{tikzcd}
      \cH \ar[r, incl, "j"] \ar[d, "q^\vee"]
      & \cHcomp \ar[d, "\bar q^\vee"] \\
      \Bunpar^0 \ar[r, incl, "j_+"]
      & \Bunextd 0
    \end{tikzcd}
  \end{equation*}
\end{lemma}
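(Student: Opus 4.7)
The commutativity and Cartesian-ness of the square are immediate from the definitions: $\cH \subset \cHcomp$ is the open substack cut out by the condition that the middle term of the extension lies in $\Bunpar^0 \subset \Bunextd 0$, and $\bar q^\vee$ sends an extension to its middle term. Hence $(\bar q^\vee)^{-1}(\Bunpar^0) = \cH$ and $\bar q^\vee|_{\cH} = q^\vee$, which handles the square.

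For surjectivity, take $\pE \in \Bunextd 0$. If $\pE \in \Bunpar^0$, any length 1 lower modification of $\pE$ produces a point of $\cH \subset \cHcomp$ above $\pE$; such modifications exist in abundance by Section \ref{sec:modifications-of-parabolic-sheaves}. Otherwise $\pE$ has non-zero torsion part supported at some $x \in D$, and the precise shape of the torsion allowed by \eqref{eq:definition-bunextd} is exactly what is needed to supply a canonical quotient $\pE \surj \pT$ with $\pT \in \cohpar$ (an appropriate one of $k_x^0$, $k_x^{(1,0)}$, $k_x^{(0,1)}$), whose kernel automatically lies in $\Bunpar^{-1}$.

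For smoothness, my plan is to combine two structural observations. First, by Lemma \ref{lem:barp-is-proper} the stack $\cHcomp$ is a projective bundle over the smooth base $\Bunpar^{-1} \times \barcohpar$, hence is itself smooth. Second, $\Bunextd 0$ is smooth, being an open substack of the moduli of parabolic coherent sheaves on $\bP^1$ whose smoothness one verifies via standard Ext-vanishing on the curve. Granting this, smoothness of $\bar q^\vee$ reduces to constancy of the relative dimension, which I would check via deformation theory: at a point $(0 \to \pF \to \pE \to \pT \to 0)$ of $\cHcomp$, the relative tangent space of $\bar q^\vee$ is the parabolic $\Hom(\pF, \pT)$ and obstructions lie in parabolic $\Ext^1(\pF, \pT)$. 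The obstruction group vanishes by a local computation at the support of $\pT$ (using that $\pF$ is locally free in a punctured neighbourhood thereof), and $\dim \Hom(\pF, \pT)$ is seen to be constant as the point of $\cHcomp$ varies.

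The main obstacle will be the uniform treatment of the boundary stratum $\Bunextd 0 \setminus \Bunpar^0$. Over $\Bunpar^0$ the smoothness is immediate from the Cartesian square and the already-established smoothness of $q^\vee$; the new content is extending this across points where $\pE$ itself is not locally free. Here one must perform the Hom/Ext computation in the parabolic category with a possibly non-locally-free $\pE$, verifying that the fiber of $\bar q^\vee$ retains the same dimension as in the generic case. This is where the precise form of the torsion condition in the definition \eqref{eq:definition-bunextd} of $\Bunextd 0$ must be used: it is calibrated so that exactly one isomorphism class of parabolic torsion quotient in $\cohpar$ becomes available along the boundary, keeping the fiber dimension locally constant.
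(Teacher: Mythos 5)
Your proposal is correct in substance, and its key input coincides with the paper's: the vanishing of the parabolic $\Ext^1(\pF, \pT)$, which holds because $\pF$ is a parabolic vector bundle and $\pT$ is torsion on a curve. The paper proves smoothness by a direct diagram chase on tangent spaces: given $\xi \in \Ext^1(\pE,\pE)$, push it forward to $\Ext^1(\pE,\pT)$, use the long exact sequence for $0 \to \pF \to \pE \to \pT \to 0$ together with $\Ext^1(\pF,\pT)=0$ to see that it is pulled back from $\Ext^1(\pT,\pT)$, and assemble a preimage in $\Ext^1(\pF \incl \pE, \pF \incl \pE)$. Your route --- relative deformation theory of the Quot-type fibration $\bar q^\vee$, with relative tangent space $\Hom(\pF,\pT)$ and obstructions in $\Ext^1(\pF,\pT)$ --- is an equivalent repackaging of the same vanishing. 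Three remarks. First, once the relative obstruction group vanishes identically, formal smoothness follows directly from the infinitesimal lifting criterion; the detour through smoothness of $\cHcomp$ and $\Bunextd 0$ plus constancy of the fibre dimension is both unnecessary and, as stated, insufficient (equidimensional fibres between smooth stacks give flatness by miracle flatness, but smoothness also requires the fibres themselves to be smooth --- which is again the obstruction vanishing, so you may as well quote it directly). Second, your worry about the boundary stratum is misplaced: the groups $\Hom(\pF,\pT)$ and $\Ext^1(\pF,\pT)$ involve only the torsion-free $\pF$ and the torsion $\pT$, not the local structure of $\pE$, so the non-local-freeness of $\pE$ never enters the computation. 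Third, your surjectivity argument at boundary points asserts a ``canonical quotient'' without producing one (note that the torsion of $\pE$ is a subsheaf, so one cannot simply quotient by it and expect a locally free kernel); the paper writes down an explicit, uniform preimage, namely $(T_x \pE \incl \pE \surj (\pE/(T_x\pE))|_x)$ for all $\pE \in \Bunextd 0$. None of these points is fatal, but the second and third should be tightened.
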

\begin{proof}
  The surjectivity follows from the fact that
  for any $\pE \in \Bunextd 0$,
  an inverse image is given by
 $(T_x \pE \incl \pE \surj (\pE/(T_x \pE))|_x) \in \cHcomp$.
 The square is Cartesian by definition of the stacks $\cHcomp$ and $\Bunextd 0$.

 To prove that the map is smooth,
 we show that for every
 $(\pF \incl \pE \surj \pT) \in \cHcomp$,
 the map on tangent spaces
 \begin{equation*}
   \Ext^1(\pF \incl \pE, \pF \incl \pE) \to \Ext^1(\pE, \pE)
 \end{equation*}
 is surjective.
 Let $\xi \in \Ext^1(\pE, \pE)$;
 we will construct an inverse image.
 By functoriality of $\Ext^1(\pE, \dash)$,
 we can use the map $\pE \surj \pT$ to construct an extension
 $\xi_0 \in \Ext^1(\pE, \pT)$.
 Every such extension comes from an extension
 $\xi_1 \in \Ext^1(\pT, \pT)$ by pullback along $\pE \surj \pT$:
 indeed,
 the short exact sequence $0 \to \pF \to \pE \to \pT \to 0$
 induces a long exact sequence of which a part is
 \begin{equation*}
   \Ext^1(\pT, \pT) \to \Ext^1(\pE, \pT) \to \Ext^1(\pF, \pT)
 \end{equation*}
 and $\Ext^1(\pF, \pT)$ is zero because $\pF$ is torsion-free by definition of $\cHcomp$.
 This gives us a surjective map of extensions $\xi \surj \xi_1$,
 whose kernel is a in $\Ext^1(\pF, \pF)$;
 the resulting short exact sequence of extensions is the sought-after preimage.
\end{proof}

Because of this lemma,
we can prove \cref{prop:clean-extension-for-compactification}
by showing $j_{+, !*} \Aut_E^0 = \bR j_{+,!} \Aut_E^0$.
This follows from the following lemma.

\begin{lemma}
  Let the notation be as above.
  Then the natural map
  \begin{equation*}
    \bR j_{+,!} \Aut_E^0
    \to
    \bR j_{+,*} \Aut_E^0
  \end{equation*}
  is an isomorphism.
\end{lemma}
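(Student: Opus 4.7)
The plan is to verify the claimed isomorphism locally on the boundary $\partial := \Bunextd 0 \setminus \Bunpar^0$. Since $\bR j_{+,!}$ and $\bR j_{+,*}$ agree on the open $\Bunpar^0$, the cone of the natural map is supported on $\partial$, and I must show that its stalks (equivalently, all costalks of $\bR j_{+,*} \Aut_E^0$ along $\partial$) vanish. Concretely, this amounts to showing that for every $\pE_0 \in \partial$, the cohomology of $\Aut_E^0$ over a small punctured étale neighborhood of $\pE_0$ vanishes.

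First I would describe $\partial$. A point $\pE_0 \in \partial$ has nonzero torsion part $\pT_0 \in \cohpar$ supported at some $x \in D$, and the condition defining $\Bunextd 0$ in \eqref{eq:definition-bunextd} forces $\pT_0 \in \{k_x^0, k_x^{(1,0)}, k_x^{(0,1)}\}$. Writing $\pF_0 := \pE_0 / \pT_0 \in \Bunpar^{-1}$ for the torsion-free quotient, $\pE_0$ is isomorphic to $\pF_0 \oplus \pT_0$ as a boundary point. Stratifying $\partial$ by the isomorphism class of $\pT_0$ (hence by $x \in D$ and by its type) reduces the problem to a stratum-by-stratum check.

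Next I would set up an étale local chart of $\Bunextd 0$ near such a $\pE_0$. Standard deformation theory gives that deformations of $\pE_0$ are controlled by $\Ext^1(\pE_0, \pE_0)$, and the direction transverse to the boundary corresponds to non-split extensions $0 \to \pF_0 \to \pE \to \pT_0 \to 0$: the zero extension gives $\pE_0 \in \partial$ itself, while non-zero extensions produce vector bundles in $\Bunpar^0$. The key structural point is that this transverse slice factors through $\cohpar$ by the type of the torsion part, so that in the chart the closure contains (a copy of) the stratum $\cP_x$ from \cref{lem:stalks-of-intermediate-extension-of-e}; more precisely, étale-locally the chart fits into a diagram containing one of the maps $k_x^{(-,0)}, k_x^{(0,-)} \colon \bA^1 \to \barcohpar$ of \cref{rmk:cohomological-property-of-e-via-maps}.

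Then I would identify the restriction of $\Aut_E^0$ in this chart. By \cref{defn:aut-e}, on $\Bunrel 0$ one has $\Aut_E^0 = (T_\infty^{-1})^* (\alpha_* \cL_E[1]) \otimes \Ecoeff$ and $\Aut_E^0$ is zero on $\Bunpar^0 \setminus \Bunrel 0$. Under the chart above, and using the isomorphism $\alpha \colon \cohpar \isom \Bunrel 1$, the pullback of $\Aut_E^0$ to the punctured neighborhood identifies (up to a shift, an $\Ecoeff$-twist, and the central $\Gm$-gerbe factor) with the pullback of $\cL_E = j_{!*} E$ along the map to $\barcohpar$ found in the previous step. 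The stalk $(\bR j_{+,*} \Aut_E^0)_{\pE_0}$ thereby becomes a cohomology of $\cL_E$ over a scheme equipped with a map to $\barcohpar$ whose image (up to the $\Gm$-gerbe) is $\cP_x$ or the image of one of $k_x^{(-,0)}, k_x^{(0,-)}$. The required vanishing is then the content of \cref{lem:stalks-of-intermediate-extension-of-e}(2) and \cref{rmk:cohomological-property-of-e-via-maps}: in both cases the compactly supported cohomology of $j_{!*} E$ over the relevant subscheme of $\barcohpar$ is zero.

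The main obstacle is the étale local analysis of $\Bunextd 0$ at a boundary point: namely, producing a slice transverse to $\Aut(\pE_0)$ and exhibiting the factorization through $\barcohpar$ that matches the maps of \cref{rmk:cohomological-property-of-e-via-maps}. The three cases $\pT_0 = k_x^0, k_x^{(1,0)}, k_x^{(0,1)}$ each need to be set up separately, but in all of them the vanishing input from \cref{sec:cohomological-properties-of-e} is tailor-made for the situation. Once the chart is in hand, the proof becomes a direct appeal to those vanishing results, and the smooth surjection $\bar q^\vee \colon \cHcomp \to \Bunextd 0$ of \cref{lem:compactified-hecke-stack-smooth-over-bunplus} is available throughout to transport computations back and forth if desired.
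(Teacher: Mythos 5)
Your overall strategy is the right one and matches the paper's: reduce to the vanishing of the costalks of $\bR j_{+,*}\Aut_E^0$ along the boundary, exhibit the transverse direction to the boundary as a space of extensions of the torsion part by the torsion-free part, and feed the result into the cohomological vanishing statements of \cref{sec:cohomological-properties-of-e}. But there is a genuine gap at the central step. The costalk of $\bR j_{+,*}\Aut_E^0$ at a boundary point $\pE_0$ is by definition a colimit of cohomologies over small punctured \'etale neighbourhoods, whereas the vanishing inputs you want to quote (\cref{lem:stalks-of-intermediate-extension-of-e}(2), \cref{rmk:cohomological-property-of-e-via-maps}, \cref{prop:cohomology-groups-of-intermediate-extension-are-zero}) are statements about the \emph{global} cohomology of $j_{!*}E$ over all of $\cP_x$, $\bA^1$ or $\bP^1$. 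Passing from the local to the global computation is exactly the point that needs an argument: one must exhibit $\Bunextd 0$ near the boundary as the total space of a vector bundle (the paper's $f\colon\cHthreeplus\to\Bun_{2,D\setminus\{x\}}^{-1}\times\BAut(k_x)$, whose zero section is the boundary), observe that the pullback of $\Aut_E^0$ to the complement of the zero section is $\Gm$-equivariant for the scaling action (rescaling an extension class does not change the isomorphism class of the middle term), and then invoke the contraction principle for monodromic sheaves on vector bundles (\cite[Lemma 0.3]{heinloth2004}) to conclude that the costalk at a point of the zero section equals the cohomology of the \emph{entire} punctured fibre $\Ext^1(k_x,\cE^{\bullet,(-1,x)})\setminus\{0\}$. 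Without the equivariance this identification is false (a skyscraper on $\bA^1\setminus\{0\}$ has zero costalk at the origin but nonzero pushforward to a point), so this step cannot be waved through as "a small punctured \'etale neighbourhood".

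Two smaller points. First, your description of the boundary is off: by \eqref{eq:definition-bunextd} the torsion part $\pT$ of a boundary point satisfies that $\cT^{0}\oplus\cT^{(-1,D)}$ has length $1$, so the torsion sits in the even \emph{or} the odd parabolic degrees at some $x\in D$ and is not an object of $\cohpar$ such as $k_x^{(1,0)}$; this parity is precisely how the paper stratifies the boundary. Second, even after the contraction step one still has to compute the cohomology of the punctured $\Ext^1$-fibre; the paper does this by factoring the classifying map through $\bP(\cF|_x)\to\Bunpar^{-1}\to\Bunpar^0$ and applying the projection formula to reduce to the vanishing of $\rH^*(\bP^1(\cF|_x),\phi_{\pF}^*T_{x,*}\Aut_E^0)$. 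Your claim that the relevant image in $\barcohpar$ is always $\cP_x$ or the image of $k_x^{(\dash,0)}$ is plausible but unverified, and in fact the full-$\bP^1$ vanishing of \cref{prop:cohomology-groups-of-intermediate-extension-are-zero} is also needed.
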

\begin{proof}
  For every $x \in D$,
  we denote by $\Bunextdeven x 0 \subset \Bunextd 0$
  the substack classifying $\pE \in \Bunextd 0$ whose torsion part lives in even degree and is supported
  at $x$.
  (The proof is analogous for parabolic sheaves with torsion part in odd degree.)
  Let $j_x \colon \Bunpar^0 \incl \Bunextdeven x 0$ denote the inclusion.
  We prove the lemma by proving that the map
  $\bR j_{x,!} \Aut_E^0 \to \bR j_{x,*} \Aut_E^0$
  is an isomorphism for all $x \in D$.

  Let $\cHthreeplus$ denote the stack
\begin{equation*}
  \cHthreeplus
  :=
  \stackbuilder
  {
    0 \to \pF \to \pE \to k_x \to 0
  }
  {
    \pF \in \Bun_{2,D \setminus \{x\}}^{-1}, \\
    \pE \in \Coh_{2,D \setminus \{x\}}^0, \\
    k_x \in \Coh_0^1 \; \text{length 1, supported on $x$}
  };
\end{equation*}
i.e., it classifies extensions of $k_x$ by parabolic sheaves
$\cF \in \Bun_{2,D \setminus \{x\}}^{-1}$
with parabolic structure only at $D \setminus \{x\}$.
There is an isomorphism
\begin{equation*}
  \pi_x \colon \cHthreeplus \isom \Bunextdeven x 0, \qquad
  (\pF \incl \pE \surj k_x) \mapsto (\pE, \cF \incl \cE)
\end{equation*}
where $(\pE, \cF \incl \cE)$ denotes the parabolic sheaf in $\Coh_{2,D}^0$
whose parabolic structures at $D \setminus \{x\}$ are given by $\pE$,
and whose parabolic structure at $x$ is
\begin{equation*}
  \mathellipsis \incl \cE(-x) \incl \cF \incl \cE \incl \cF(x) \incl \mathellipsis
\end{equation*}

Let $\pE \in \Bunextd 0 \setminus \Bunpar^0$
and write
\begin{equation*}
  \pi_x^{-1}(\pE) =:
  (0 \to \cE^{\bullet, (-1,x)} \incl \cE^{\bullet, (0,x)} \surj k_x \to 0) \in \cHthreeplus.
\end{equation*}
Then we claim
\begin{equation}
  \label{eq:calculation-of-fiber-to-prove-clean-extension}
  (\bR j_{x,*} \Aut_E^0)|_{\pE}
  = \rH^*( \Ext^1(k_x, \cE^{\bullet,(-1,x)}) \setminus \{0\}, \; \pi_x^* \Aut_E^0).
\end{equation}
\begin{proof}[Proof of \eqref{eq:calculation-of-fiber-to-prove-clean-extension}]
  The map
  \begin{equation*}
    \begin{split}
      f \colon \cHthreeplus &\to \Bun_{2,D \setminus \{x\}}^{-1} \times \BAut(k_x),
      \\
      (\pF_{D \setminus \{x\}} \incl \pE_{D \setminus \{x\}} \surj k_x) &\mapsto (\pF_{D \setminus \{x\}}, [k_x]).
    \end{split}
  \end{equation*}
  is a vector bundle
  (\cite[Remark 6.3 (1)]{heinloth2004})
  and its zero section
  \begin{equation*}
    s \colon
    \Bun_{2,D \setminus \{x\}}^{-1} \times \BAut(k_x)
    \to \cHthreeplus
  \end{equation*}
  is exactly the complement of $\im (\pi_x^{-1} \circ j_x)$.
  We write
  $\autelift := \pi_x^*(\Aut_E^0)$
  for the pullback to $\pi_x^{-1}(\Bunpar^0)$.
  A general lemma
  (\cite[Lemma 0.3]{heinloth2004} is the statement we use;
  the calculation appears in \cite{FGVConjecture} and \cite{Brylinski86})
  then says that because $\autelift$ is $\Gm$-equivariant
  for the natural $\Gm$-action on the vector bundle $\cHthreeplus \to \Bun_{2, D \setminus \{x\}}^{-1} \times \BAut(k_x)$,
  we have
  \begin{equation*}
    s^* \left(  \bR j_{x,*}\autelift \right) = \bR f_* \autelift.
  \end{equation*}
  The statement then follows by pulling back $\bR f_* \autelift$
  along $$(\cE^{\bullet, (-1,x)}, k_x) \colon \Spec k \to \Bun_{2,D \setminus \{x\}}^{-1} \times \BAut(k_x).$$
\end{proof}

It remains to show that
$\rH^*( \Ext^1(k_x, \cE^{\bullet,(-1,x)}) \setminus \{0\}, \; \pi_x^* \Aut_E^0)$
is zero.
The map
\begin{equation*}
\Ext^1(k_x, \cE^{\bullet, (-1, x)}) \setminus \{0\} \to \cHthreeplus \xrightarrow{\pi_x} \Bunpar^0
\end{equation*}
factors as
\begin{equation*}
  \Ext^1(k_x, \cE^{\bullet, (-1, x)}) \setminus \{0\}
  \xrightarrow{\rho} \bP(\cF|_x)
  \xrightarrow{\phi_{\pF}} \Bunpar^{-1}
  \xrightarrow{T_x^{-1}} \Bunpar^0
\end{equation*}
where $\rho$ maps $(i \colon \pF \incl \pE)$
to $\ker(i|_x \colon \cF|_x \to \cE|_x)$
and $\phi_{\pF}$ maps
$\ell \subset \cF|_x$
to $(\pF, \ell) \in \Bunpar^{-1}$.
By the projection formula,
the cohomology group we are calculation
(\cref{eq:calculation-of-fiber-to-prove-clean-extension})
is equal to
\begin{equation*}
  \rH^*(\bP^1(\cF|_x), \phi_{\pF}^* T_{x,*} \Aut_E^0 \otimes \bR \rho_* \bQ_\ell).
\end{equation*}
Because $\bR \rho_* \bQ_\ell$
is an extension of two constant local systems,
this group is zero if
\begin{equation*}
  \rH^*(\bP^1(\cF|_x), \phi_{\pF}^* T_{x,*} \Aut_E^0) = 0
\end{equation*}
holds.
This last equality follows from the cohomological properties of $E$ listed in
\cref{sec:cohomological-properties-of-e}.
See \cite[Lemma 13.14]{mythesis} for details.
\end{proof}

\section{The Hecke transform is an intermediate extension}
\label{sec:hecke-transform-is-intermediate-extension}

The goal of the this section is to prove the following theorem.
\begin{thm}
  \label{thm:interemediate-extension-of-local-system}
  The complex $\heckeglobal \Aut_E^0$
  on $\Bunpar^1 \times \barcohpar$
  \begin{enumerate}[(a)]
  \item is the intermediate extension of a rank 4 local system on
    $\pi^{-1}(\bP^1 \setminus D) \times \bP^1 \setminus D$; and
  \item
    vanishes outside of $\Bunrel 1 \times \barcohpar$.
  \end{enumerate}
\end{thm}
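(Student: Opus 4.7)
The plan is to combine the decomposition of \cref{prop:hecke-transform-decomposes} with explicit stalk computations based on the Hecke modification analysis of \cref{sec:calculation-of-all-length-1-lower-modifications}. I will proceed in three steps: first, identify a rank~$4$ local system on the open locus $U := \pi_1^{-1}(\bP^1 \setminus D) \times (\bP^1 \setminus D) \subset \Bunrel 1 \times \barcohpar$; second, establish (b) by a direct support analysis; third, upgrade the open-locus identification to the full intermediate extension statement by matching stalks along the boundary of $U$ inside $\Bunrel 1 \times \barcohpar$.

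For the first step, I restrict the Hecke correspondence to $U$. On $\pi_0^{-1}(\bP^1 \setminus D) \subset \Bunrel 0$, the complex $\Aut_E^0$ is, by \cref{defn:aut-e}, a shift and constant twist of $\pi_0^* E$. By \cref{lem:modifications-of-normal-points-at-normal-points-always-in-relevant-locus}, every fiber of $p$ above a point of $U$ lies entirely in $\Bunrel 0$, and by \cref{thm:summary-statement-of-modifications-normal-points} together with \cref{addendum:formula-to-theorem}, the composition $\phi := \pi_0 \circ q$ on each fiber is a degree~$2$ map $\bP^1 \to \bP^1$ given by the explicit formula of the addendum. Since $p$ is proper over $U$ (no torsion sheaves supported on $D$ are involved), a proper base change argument expresses the stalk of $\heckeglobal \Aut_E^0$ at $(\pE, x) \in U$ as the compactly supported cohomology of the shifted pullback of $E$ along $\phi$; as $(\pE, x)$ varies, this realises $\heckeglobal \Aut_E^0|_U$ as a shifted local system of generic rank $2 \cdot 2 = 4$.

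For (b), I show that whenever $\pE \in \Bunpar^1 \setminus \Bunrel 1$, no short exact sequence $0 \to \pE' \to \pE \to \pT \to 0$ has $\pE' \in \Bunrel 0$; equivalently, no $\pE' \in \Bunrel 0$ admits an upper modification landing outside $\Bunrel 1$. Using $\Bunrel 0 = T_\infty \Bunrel 1$ together with the dual of the classification of lower modifications of $\pEtilde$ and $\pEhat$ in \cref{sec:lower-modifications-of-petilde}--\cref{sec:lower-modifications-of-pehat}, one enumerates the isomorphism types of $\pE' \in \Bunrel 0$ and checks case by case that every upper modification of such a $\pE'$ lies in $\Bunrel 1$. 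This forces the stalk of $\heckeglobal \Aut_E^0 = \bR p_! q^* \Aut_E^0$ at $(\pE, [\pT])$ to vanish whenever $\pE \not\in \Bunrel 1$.

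The hardest step is upgrading the open-locus identification to the full statement (a). Once (b) is established, each IC summand of $\heckeglobal \Aut_E^0$ from \cref{prop:hecke-transform-decomposes} has support inside $\Bunrel 1 \times \barcohpar$, so it suffices to verify that the stalk of $\heckeglobal \Aut_E^0$ at each point of the boundary $\Bunrel 1 \times \barcohpar \setminus U$ agrees with the corresponding stalk of the intermediate extension of the rank~$4$ local system produced in step one. I would stratify this boundary as $\Bunrel 1 \times \Supp^{-1}(D) \cup \pi_1^{-1}(D) \times \barcohpar$ and compute the stalks directly, applying the elementary Hecke operators $T_x$ to reduce the $\Bunrel 1$-direction to modifications of $\pEtilde$ and $\pEhat$, and using the maps $k_x^{(\dash, 0)}$ and $k_x^{(0, \dash)}$ of \cref{rmk:cohomological-property-of-e-via-maps} to analyse the $\barcohpar$-direction. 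The cohomological input is the stalk description of $j_{!*} E$ in \cref{lem:stalks-of-intermediate-extension-of-e} together with the vanishings in \cref{lem:euler-characteristic-of-e-is-zero} and \cref{prop:cohomology-groups-of-intermediate-extension-are-zero}. The main obstacle is the corner stratum where simultaneously $\pE \in \pi_1^{-1}(D)$ and $\Supp \pT \in D$: here several degenerate Hecke-modification cases combine, and a delicate interplay between the cohomology of $j_{!*} E$ along $D$ and the modification geometry of $\pEhat$ will be required for the dimensions to match.
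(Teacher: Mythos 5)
Your step for part (b) contains a genuine error. You claim the vanishing outside $\Bunrel 1 \times \barcohpar$ is a pure support statement: ``no $\pE' \in \Bunrel 0$ admits an upper modification landing outside $\Bunrel 1$.'' This is false. The eight bundles $\pE \in \Bunpar^1 \setminus \Bunrel 1$ with $\dim \Aut(\pE) = 2$ (namely $\pE_\irrel = (\cO(2),\emptyset)\oplus(\cO(-1),D)$ and its elementary-Hecke translates and twists) each possess an $\bA^1$ of length~1 lower modifications lying inside $\Bunrel 0$: the restriction of $q$ to $p^{-1}((\pE_\irrel,[\pT])) \cap q^{-1}(\Bunrel 0)$ is one of the maps $k_x^{(\dash,0)}, k_x^{(0,\dash)} \colon \bA^1 \to \barcohpar$ of \cref{rmk:cohomological-property-of-e-via-maps}. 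The stalk of $\heckeglobal\Aut_E^0$ there vanishes not because the fiber misses the support of $\Aut_E^0$, but because of the cohomological cancellation $\rH_{\mathrm c}^*(\bA^1, (k_x^{(\dash,0)})^*\cL_E) = 0$, which rests on \cref{lem:stalks-of-intermediate-extension-of-e}(2) (the stalk of $j_{!*}E$ at $k_x^0$ carries an extra factor $\rH^*(\Gm,\bQ_\ell)$ that exactly cancels the contribution of $k_x^{(1,0)}$ in compact cohomology). The support argument alone does suffice for the stratum $\dim\Aut(\pE)\geq 3$, but part (b) cannot be closed without this cohomological input.

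For part (a), your strategy of matching the boundary stalks of $\heckeglobal\Aut_E^0$ against those of the intermediate extension of the rank~4 local system is not workable as stated: the IC stalks at the boundary depend on the local monodromy of that rank~4 local system near $\pi^{-1}(D)\times\barcohpar$ and $\Bunrel 1\times\Supp^{-1}(D)$, and you have no independent handle on that monodromy --- the only access to it is through $\heckeglobal\Aut_E^0$ itself, so the comparison threatens to be circular. The paper avoids this entirely: from the explicit boundary stalks (\cref{prop:stalks-of-hecke-transform-of-aute0-when-at-least-one-point-is-special}) it extracts only codimension bounds on the supports of the cohomology sheaves (\cref{cor:cohomological-degrees-of-hecketransform-of-aute0}), applies the same bounds to the Verdier dual (which is $\heckeglobal\Aut_{E^\vee}^0$), and adds the two resulting inequalities on $\dim Z \mp m$ to force every simple summand of the decomposition to have full support. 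That support--cosupport argument is the missing idea; stalk bounds alone, without the dual, do not rule out summands on proper closed substacks. Finally, in your step one the assertion of ``generic rank $2\cdot 2 = 4$'' skips two necessary points: the fiberwise compact cohomology of $\phi^*\Aut_E^0$ must be shown to be concentrated in degree~1 (the paper does this via irreducibility of $\phi^*\Aut_E^0$, which uses the unipotent-monodromy hypothesis on $E$), and the dimension~4 then comes from a Grothendieck--Ogg--Shafarevich Euler-characteristic computation, not from multiplying rank by degree.
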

We conclude the proof of this theorem on page
\pageref{pf:proof-of-thm-interemediate-extension-of-local-system}.

\begin{prop}
  \label{prop:aute-0-is-local-system-on-open}
  The restriction of the complex $\heckeglobal \Aut_E^0$ on $\Bunpar^1 \times \barcohpar$
  to $\pi^{-1}(\bP^1 \setminus D) \times (\bP^1 \setminus D)$
  is a local system of rank 4.
\end{prop}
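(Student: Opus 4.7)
The plan is to compute stalks of $\heckeglobal \Aut_E^0$ at points of $\pi^{-1}(\bP^1 \setminus D) \times (\bP^1 \setminus D)$ via proper base change, and then upgrade constant-rank stalks to a local system using \cref{prop:hecke-transform-decomposes}. Fix $(\pE, x) \in \pi^{-1}(\bP^1 \setminus D) \times (\bP^1 \setminus D)$. Because $p$ is proper over $\Bunpar \times (\barcohpar \setminus \Supp^{-1}(D))$ and $x \notin D$, proper base change gives
\begin{equation*}
  (\heckeglobal \Aut_E^0)|_{(\pE, x)} \;=\; \bR\Gamma\bigl(p^{-1}(\pE, x),\; q^* \Aut_E^0\bigr).
\end{equation*}
By \cref{lem:modifications-of-normal-points-at-normal-points-always-in-relevant-locus} and \cref{thm:summary-statement-of-modifications-normal-points}, $p^{-1}(\pE, x) \cong \bP(\cE|_x) \cong \bP^1$, and $\phi := q|_{p^{-1}(\pE, x)}$ lands in $\Bunrel 0$ with $\pi_0 \circ \phi \colon \bP^1 \to \bP^1$ a degree $2$ cover whose ramification locus consists precisely of the two preimages of $D$ whose $\phi$-images have automorphism group $\Gm \times \Gm$.

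Next I transport $\Aut_E^0$ to a pullback of $\cL_E = j_{!*} E$. By \cref{defn:aut-e}, on $\Bunrel 0$ the sheaf $\Aut_E^0$ equals (up to the shift $[1]$ and the constant twist $\Ecoeff$) the transport of $\cL_E$ along the chart $(T_\infty \circ \alpha)^{-1} \colon \Bunrel 0 \isom \cohpar$. Setting $\bar g := (T_\infty \circ \alpha)^{-1} \circ \phi \colon \bP^1 \to \cohpar$, the stalk above becomes $\bR\Gamma(\bP^1, \bar g^* \cL_E[1]) \otimes \Ecoeff$, so it suffices to analyze the cohomology of $\bar g^* \cL_E[1]$ on $\bP^1$.

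To evaluate it, stratify $\bP^1 = U \sqcup V \sqcup W$, where $U := (\pi_0 \circ \phi)^{-1}(\bP^1 \setminus D)$ is the étale locus, $W$ is the set of two ramified preimages of $D$ (sent by $\bar g$ to $k_y^0$-type points), and $V$ is the set of four unramified preimages of $D$ (sent to $k_y^{(1,0)}$- or $k_y^{(0,1)}$-type points). On $U$ the sheaf $\bar g^* \cL_E|_U$ is the pullback of the rank $2$ local system $E$ along an étale degree $2$ map; tameness of $E$ makes Grothendieck-Ogg-Shafarevich combinatorial and gives $\chi_c(U, \bar g^* \cL_E|_U) = 2 \chi_c(U) = -8$. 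By \cref{lem:stalks-of-intermediate-extension-of-e}, the stalks at $W$-points are $(\bar j_* E[1])|_y \otimes \rH^*(\Gm, \bQ_\ell)$, which have vanishing Euler characteristic, while those at $V$-points are $(\bar j_* E[1])|_y$, one-dimensional under the standing assumption of genuine unipotent ramification at every $y \in D$. Additivity of $\chi_c$ across the stratification then yields $\chi\bigl(\bR\Gamma(\bP^1, \bar g^* \cL_E[1])\bigr) = 8 - 4 + 0 = 4$.

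Finally, \cref{prop:hecke-transform-decomposes} identifies $\heckeglobal \Aut_E^0$ as a direct sum of shifted simple perverse sheaves on $\Bunpar^1 \times \barcohpar$. On the smooth, two-dimensional open $\pi^{-1}(\bP^1 \setminus D) \times (\bP^1 \setminus D)$, each summand restricts either to zero or to a shifted local system; perversity together with the constant Euler characteristic $4$ pins the nonzero summands into a single cohomological degree, producing a single rank $4$ local system. The main obstacle is the Euler characteristic bookkeeping—in particular identifying the stalks of $\cL_E$ at the $\bar g$-preimages of $D$ and verifying that the ramified contributions cancel thanks to the $\rH^*(\Gm, \bQ_\ell)$ factor—together with the secondary task of ruling out cancellation between distinct cohomological degrees; the latter can be handled by invoking smoothness of $p$ and $q$ over the chosen open (so $\bR q_! q^*$ preserves perverse extensions away from $D$) or by appealing to the vanishing statements in \cref{prop:cohomology-groups-of-intermediate-extension-are-zero}.
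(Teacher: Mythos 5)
Your overall skeleton matches the paper's: use \cref{prop:hecke-transform-decomposes} to reduce to showing every stalk is $4$-dimensional, identify the fiber of $p$ with a $\bP^1$ mapping $2$-to-$1$ to $\Bunrel 0$ via \cref{thm:summary-statement-of-modifications-normal-points}, and compute an Euler characteristic with Grothendieck--Ogg--Shafarevich. But there are two problems, one cosmetic and one genuine.

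The cosmetic one first: your stratification assumes the two ramification points of $\pi_0 \circ \phi$ always lie over $D$ (so that $D$ has six preimages, four unramified and two ramified). That is false in general. \Cref{thm:summary-statement-of-modifications-normal-points}(3) says a point maps to a $\Gm \times \Gm$-bundle iff it lies over $D$ \emph{and} is ramified; it does not say the ramification locus sits over $D$. For generic $(\pE, [\pT])$ the branch points of the degree-$2$ map lie in $\bP^1 \setminus D$ and $D$ has eight unramified preimages (compare \cref{prop:special-preimages-of-degree-2-map}: coincidence requires $r = s$). The paper therefore carries the parameter $s = \#(D' \cap S) \in \{0,1,2\}$ through the computation; the $s$-dependence cancels precisely because the $\rH^*(\Gm,\bQ_\ell)$ factor at the $\Gm\times\Gm$-points has Euler characteristic zero, so your final number $\chi = -4$ (rank $4$) is correct, but only by an accident your write-up does not justify. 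You should redo the count with variable $s$ as the paper does.

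The genuine gap is the passage from Euler characteristic to rank. The stalk is $\rH^*_{\mathrm c}(\bP^1, \phi^*\Aut_E^0)$, and a priori this complex could have nonzero cohomology in several degrees (with total dimension larger than $4$ and alternating sum still $4$), in which case the summands of the decomposition would not assemble into a single local system of rank $4$ on the open set. Neither of your proposed fixes closes this: \cref{prop:cohomology-groups-of-intermediate-extension-are-zero} concerns $\rH^*(\bP^1, \bar j_{!*}E)$ for $E$ itself, not for the pullback $\phi^* E$ along a degree-$2$ cover, and $\phi^* E$ could in principle acquire monodromy invariants (namely if $E \cong \phi_* L$ for a rank-$1$ local system $L$ on the cover); and smoothness of $p$ and $q$ only gives the decomposition you have already used. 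The paper's proof supplies exactly the missing ingredient: it shows $\phi^* \Aut_E^0$ is irreducible by noting that a nonzero map $\phi^*\Aut_E^0 \to L$ would, by adjunction, force $\Aut_E^0 \cong \phi_* L$, whose monodromy around squares in $\pi_1(\Bunrel 0)$ is semisimple, contradicting the unipotence hypothesis on $E$. Irreducibility kills $\rH^0$, duality kills $\rH^2_{\mathrm c}$, and only then does $-\chi = 4$ compute the rank of $\rH^1$. Without an argument of this kind (which is where the unipotent-monodromy hypothesis actually enters) your proof does not establish the proposition.
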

\begin{proof}
  Since $\heckeglobal \Aut_E^0$ decomposes as a direct sum of shifted perverse sheaves
  (\cref{prop:hecke-transform-decomposes}),
  it suffices to prove that all fibers have dimension 4.
  By definition,
  for any point $(\pE, [\pT]) \in \pi^{-1}(\bP^1 \setminus D) \times (\bP^1 \setminus D)$,
  we have
  \begin{equation*}
    \begin{split}
      (\heckeglobal \Aut_E^0[-2])|_{(\pE, [\pT])}
      &= (\bR p_! q^* \Aut_E^0)|_{(\pE, [\pT])} \\
      &= \rH_{\mathrm c}^*( p^{-1}((\pE, [\pT])), \; \phi^* \Aut_E^0)
    \end{split}
  \end{equation*}
  where
  \begin{equation*}
    \phi \colon p^{-1}((\pE, [\pT])) \to \Bunrel 0
  \end{equation*}
  is the restriction of $q \colon \cH \to \Bunpar^0$.
  This map $\phi$
  has degree 2
  ($p^{-1}((\pE, [\pT]))$
  is isomorphic to $\bP^1$)
  and its image does indeed lie in $\Bunrel 0$
  (\cref{thm:summary-statement-of-modifications-normal-points}).

  We claim that $\phi^* \Aut_E^0$ is irreducible.
  Indeed, assume towards a contradiction
  that there exists a rank one local system $L$ on $p^{-1}((\pE, [\pT]))$
  and a non-zero map $\phi^* \Aut_E^0 \to L$.
  By adjunction, this map corresponds to a non-zero map
  $\Aut_E^0 \to \phi_* L$,
  which is an isomorphism,
  since both local systems are rank 2 and $\Aut_E^0$ is irreducible.
  This contradicts the assumption that $\Aut_E^0|_{\Bunrel 0} = \phi_* L|_{\Bunrel 0}$ has unipotent monodromy,
  because the monodromy of every square in $\pi_1(\Bunrel 0)$ acts semisimply on $\phi_* L$.

  As a result, $\phi^* \Aut_E^0$ does not have any global section,
  and by duality,
  $\rH_{\mathrm c}^2(p^{-1}((\pE, [\pT])), \phi^* \Aut_E^0)$ vanishes, too.
  It therefore suffices to prove that the Euler characteristic of $\phi^* \Aut_E^0$ is -4.
  This is a straightforward computation using Grothendieck-Ogg-Shafarevich
  (\cite[formula 7.2]{sga5}, or \cite[theorem  9.1]{kindlerruelling2014});
  we sketch it here.
  Denote by $S \subset \bP^1 = p^{-1}((\pE, [\pT]))$
  the ramification locus, which consists of two points.
  Write $D' = \phi^{-1}(\pi_0^{-1}(D))$.
  Then $\#D' = 8 - s$,
  where $s = \#(D' \cap S)$.
  By the additivity of the Euler characteristic, we have
  \begin{equation*}
    \chi(\bP^1, \phi^* \Aut_E^0)
    = \chi_c(\bP^1 \setminus D', \phi^* E)
    + \chi(D', \phi^* \Aut_E^0),
  \end{equation*}
  where $\chi_c$ denotes the Euler characteristic with compact support.
  By Grothendieck-Ogg-Shafarevich,
  \begin{equation*}
    \chi_c(\bP^1 \setminus D', \phi^* E)
    = \rank (\phi^* E)  \cdot\chi_c(\bP^1 \setminus D', \bQ_\ell) = 2 \cdot(2 - \#D') = -12 + 2s.
  \end{equation*}

  We calculate $\chi(D', \phi^* \Aut_E^0)$
  using our determination of the stalks of $\Aut_E^0$
  (\cref{lem:stalks-of-intermediate-extension-of-e});
  the essential point is that for $x \in D' \cap S$,
  $\phi(x)$ has $\Gm \times \Gm$ automorphisms
  (and $\chi(\Spec k, (\Aut_E^0)|_{\phi(x)}) = 0$),
  while for $x \in D' \setminus S$,
  $\phi(x)$ has $\Gm$-automorphisms
  (and $\chi(\Spec k, (\Aut_E^0)|_{\phi(x)}) = 1$).
  Hence $\chi(D', \phi^* \Aut_E^0) = 8 - 2s$,
  which completes the proof.
\end{proof}

\begin{prop}
  \label{prop:aut-e-0-vanishes-outside-relevant-locus}
  The complex $\heckeglobal \Aut_E^0$ vanishes outside of $\Bunrel 1 \times \barcohpar$.
\end{prop}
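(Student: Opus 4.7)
The plan is to compute the stalk of $\heckeglobal \Aut_E^0 = \bR p_! q^* \Aut_E^0$ at an arbitrary point $(\pE, [\pT]) \in (\Bunpar^1 \setminus \Bunrel 1) \times \barcohpar$ and show it vanishes. By \cref{prop:hecke-transform-decomposes}, $\heckeglobal \Aut_E^0$ is a direct sum of shifted simple perverse sheaves, so pointwise stalk vanishing on the open substack $(\Bunpar^1 \setminus \Bunrel 1) \times \barcohpar$ implies the sheaf itself vanishes there. By proper base change the stalk equals
\[
  \rH_c^*(p^{-1}(\pE, [\pT]),\; q^* \Aut_E^0),
\]
and since $\Aut_E^0$ is supported on $\Bunrel 0$, only modifications whose subbundle lies in $\Bunrel 0$ can contribute.

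I would stratify $\Bunpar^1 \setminus \Bunrel 1$ using the description collected in the proof of \cref{prop:cusp-forms-vanish-outside-relevant-locus}. For $\pE$ whose underlying vector bundle is $\cO(1+m) \oplus \cO(-m)$ with $m \geq 2$, every length-one lower modification has underlying bundle either $\cO(m) \oplus \cO(-m)$ or $\cO(1+m) \oplus \cO(-m-1)$; neither of these occurs as the underlying bundle of an element of $\Bunrel 0$, which (by applying $T_\infty$ to the parabolic bundles in $\Bunrel 1$) are of the form $\cO \oplus \cO$ or $\cO(1) \oplus \cO(-1)$. Hence $q^* \Aut_E^0$ already vanishes on the entire fiber, and the stalk is trivially zero.

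The remaining strata are (a) bundles with underlying $\cO(2) \oplus \cO(-1)$, and (b) the direct sums $(\cO(1), I) \oplus (\cO, D \setminus I)$ with $\# I \in \{0,2,3,4\}$. For these I would parametrize $p^{-1}(\pE, [\pT])$ explicitly: if $[\pT]$ is supported at $x \notin D$ the fiber is $\bP(\cE|_x) \cong \bP^1$, while for $x \in D$ one uses the pair-of-flags description of \cref{sec:modifications-of-parabolic-sheaves}. Using the defining relation $\Aut_E^0|_{\Bunrel 0} = (T_\infty^{-1})^* \alpha_*(\cL_E[1]) \otimes \Ecoeff$, the stalk reduces to the compactly supported cohomology of $\cL_E$ pulled back along the composite
\[
  p^{-1}(\pE, [\pT]) \cap q^{-1}(\Bunrel 0) \xrightarrow{q} \Bunrel 0 \xrightarrow{\alpha^{-1} \circ T_\infty} \cohpar \to \barcohpar.
\]

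The main obstacle is pinning down this composite precisely enough to apply the cohomological facts collected in \cref{sec:cohomological-properties-of-e}. I expect that for $\pE$ with underlying $\cO(2) \oplus \cO(-1)$ the composite factors through a section $\bP^1 \to \barcohpar$ of the support map, exactly as in the auxiliary calculation at the end of the proof of \cref{lem:aute1-via-fourier-transform}, so that \cref{prop:cohomology-groups-of-intermediate-extension-are-zero} supplies the vanishing. For the direct-sum strata, the composite should land inside a neighbourhood $\cP_x \subset \barcohpar$ (where \cref{lem:stalks-of-intermediate-extension-of-e} gives vanishing) or factor through one of the maps $k_x^{(\dash, 0)}$, $k_x^{(0, \dash)}$ of \cref{rmk:cohomological-property-of-e-via-maps}, which by the same remark again produces the desired vanishing. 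This parallels exactly the concluding step of the proof of \cref{prop:clean-extension-for-compactification}.
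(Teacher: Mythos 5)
Your architecture is the right one and is essentially the paper's: reduce to stalks, use that $\Aut_E^0$ is supported on $\Bunrel 0$, and for each non-relevant $\pE$ either show that $p^{-1}(\pE,[\pT])$ misses $q^{-1}(\Bunrel 0)$ or identify the composite $p^{-1}(\pE,[\pT])\cap q^{-1}(\Bunrel 0)\to\barcohpar$ and invoke the vanishing results of \cref{sec:cohomological-properties-of-e}. Your treatment of the stratum $\cO(1+m)\oplus\cO(-m)$ with $m\geq 2$ is complete and correct. But for the remaining strata the argument stops at ``I expect'' and ``should'', and that is precisely where the content of the proposition lies; worse, the expectation you state for the $\cO(2)\oplus\cO(-1)$ stratum is wrong. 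The ``section of the support map'' picture at the end of the proof of \cref{lem:aute1-via-fourier-transform} arises because one computes there a fiber of $q$: the cokernel $\pT$ varies and its support sweeps out $\bP^1$. Here $[\pT]$ is \emph{fixed}, one computes a fiber of $p$, and the relevant map records where in the coarse space of $\Bunrel 0$ the \emph{subbundle} lands. For $\pE$ with underlying bundle $\cO(2)\oplus\cO(-1)$, every length one lower modification lying in $\Bunrel 0$ has underlying bundle $\cO(1)\oplus\cO(-1)$, and all such bundles in $\Bunrel 0$ sit over a single point of $D$ under $\pi_0$; the composite is therefore of constant-support type, i.e.\ one of the maps $k_x^{(\dash,0)},k_x^{(0,\dash)}\colon\bA^1\to\barcohpar$, and the vanishing comes from \cref{rmk:cohomological-property-of-e-via-maps}, not from \cref{prop:cohomology-groups-of-intermediate-extension-are-zero}.

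You are also missing the idea that disposes of most cases at once. The paper splits the complement of $\Bunrel 1$ by the dimension of the automorphism group: if $\dim\Aut(\pE)\geq 3$, then \emph{no} length one lower modification $\pF\subset\pE$ lies in $\Bunrel 0$ at all, because otherwise $\pE(-1)\subset\pF$ would be a length one lower modification of a bundle in $\Bunrel 0$ with at least three-dimensional automorphism group, contradicting the classification of \cref{sec:calculation-of-all-length-1-lower-modifications}. So those stalks vanish for free, and only the eight bundles with $\dim\Aut(\pE)=2$ require the explicit identification of the map, which in every one of those cases is of $k_x^{(\dash,0)}$ or $k_x^{(0,\dash)}$ type. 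Without either this observation or an actual case-by-case computation of the composite for every choice of $I$ in your direct-sum strata, your proof has a genuine gap, and the one place where you do commit to a concrete prediction is incorrect.
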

\begin{proof}
  Let $\pE \in \Bunpar^1 \setminus \Bunrel 1$.
  We distinguish two cases:
  $\dim \Aut(\pE) \geq 3$
  and $\dim \Aut(\pE) = 2$.
  (All parabolic bundles $\pF \in \Bunpar^1$ with $\dim \Aut(\pF) = 1$ lie in the relevant locus.)

  Suppose $\dim \Aut(\pE) \geq 3$.
  Then no length 1 lower modification of $\pE$ lies in $\Bunrel 0$.
  Indeed,
  if $\pF \subset \pE$ were a length 1 lower modification of $\pE$ with $\pF \in \Bunrel 0$,
  then $\pE(-1) \subset \pF$ would be a length lower modification of $\pF \in \Bunrel 0$
  with $\dim \Aut(\pE(-1)) \geq 3$,
  in contradiction to our calculations of the length 1 lower modifications of all parabolic bundles
  in the relevant locus
  (see \cref{sec:calculation-of-all-length-1-lower-modifications}).
  Because $\Aut_E^0$ is supported on $\Bunrel 0$,
  this proves that
  $(\heckeglobal \Aut_E^0)|_{\{\pE\} \times \barcohpar} = 0$.

  Suppose now that $\dim \Aut(\pE) = 2$.
  There are exactly eight parabolic bundles in $\Bunpar^0 \setminus \Bunrel 0$
  with an automorphism group of dimension 2, namely:
  $\pE_\irrel := (\cO(2), \emptyset) \oplus (\cO(-1), D)$,
  $T_D \pE_\irrel(2)$
  and $T_{x_1} T_{x_2} \pE_\irrel(1)$ for every $x_1, x_2 \in D$ with $x_1 \neq x_2$.
  With a straightforward calculation,
  one can show that
  for every $[\pT] \in \barcohpar$,
  the map
  \begin{equation*}
    \phi_{(\pE_\irrel, [\pT])} \colon p^{-1}((\pE_\irrel, [\pT])) \cap q^{-1}(\Bunrel 0) \to \Bunrel 0
  \end{equation*}
  that is the restriction of $q$,
  is either
  $k_x^{(\dash, 0)} \colon \bA^1 \to \barcohpar$
  or
  $k_x^{(0, \dash)} \colon \bA^1 \to \barcohpar$
  (\cref{rmk:cohomological-property-of-e-via-maps})
  for some $x \in D$.
  It follows
  that the fiber
  \begin{equation*}
    (\heckeglobal \Aut_E^0)|_{(\pE, [\pT])}
    = \rH_c^{\ast}( p^{-1}(\pE_\irrel, [\pT]) \cap q^{-1}(\Bunrel 0), \; \phi_{(\pE_\irrel, \pT)}^* \Aut_E^0 )[2]
  \end{equation*}
  vanishes (\cref{rmk:cohomological-property-of-e-via-maps}).
  Because all relevant operations commute with the elementary Hecke operators $T_{x'}$ ($x' \in D$)
  and with twisting by $\cO(1)$ or $\cO(2)$,
  the same holds for the seven other points in $\Bunpar^0 \setminus \Bunrel 0$
  with 2-dimensional automorphism groups.
\end{proof}

In previous sections,
we have drawn conclusions on
the stalks of $\heckeglobal \Aut_E^0$
at points in $\pi^{-1}(\bP^1 \setminus D) \times (\bP^1 \setminus D)$.
We have also seen that 
$\heckeglobal \Aut_E^0$
vanishes outside of $\Bunrel 1 \times \barcohpar$.
In this section,
we consider the stalks at the remaining points:
the points $(\pE, [\pT]) \in \Bunrel 1 \times \barcohpar$
such that $\pi(\pE)$ or $\Supp \pT$ lies in $D$.
The fibers at these points are easier to calculate than the other fibers in $\Bunrel 1 \times \barcohpar$:
in this case,
the restriction of $q \colon \cH \to \Bunpar^0$
to $p^{-1}((\pE, [\pT])) \cap q^{-1}(\Bunrel 0)$
is a degree 1 map
when composed with $\pi_0 \colon \Bunrel 0 \to \bP^1$,
whereas it is degree 2 in the other cases.
The fiber of $\heckeglobal \Aut_E^0$ at $(\pE, [\pT])$
is the compact cohomology of the pullback of $\Aut_E^0[2]$
along these maps,
and these cohomology groups can therefore be deduced directly from
our classification of these maps in 
\cref{sec:calculation-of-all-length-1-lower-modifications}.
This gives the following result.
Recall that for $x \in D$, $M_x$ denotes the unique M\"obius transformation
$\bP^1 \isom \bP^1$
that preserves $D$ and sends $\infty$ to $x$.

\begin{prop}
  \label{prop:stalks-of-hecke-transform-of-aute0-when-at-least-one-point-is-special}
  Let $(\pE, \pT) \in \Bunrel 1 \times \cohpar$.
  Write
  \begin{equation*}
    \{x, y\} = \{ \pi(\pE), \; \Supp(\pT)\} \subset \bP^1
  \end{equation*}
  and assume $x \in D$,
  i.e., we have $\pE \in \pi_0^{-1}(D)$,
  $\pT \in \Supp^{-1}(D)$ or both.
  \begin{enumerate}
  \item
    If $\pE$ and $\pT$ both have automorphism group $\Gm$,
    then
    \begin{equation*}
      (\heckeglobal \Aut_E^0)|_{(\pE, [\pT])}
      = (j_{!*} E[1])|_{M_x(y)} \otimes \Ecoeff.
    \end{equation*}
  \item
    If either $\pE$ or $\pT$, but not both, has $\Gm \times \Gm$ as its automorphism group,
    then
    \begin{equation*}
      (\heckeglobal \Aut_E^0)|_{(\pE, [\pT])}
      = (j_{!*} E[2])|_{M_x(y)} \otimes \rH_c^*(\Gm, \bQ_\ell) \otimes \Ecoeff.
    \end{equation*}
  \item
    If both $\pE$ and $\pT$ have automorphism group $\Gm \times \Gm$,
    then
    \begin{equation*}
      (\heckeglobal \Aut_E^0)|_{(\pE, [\pT])}
      = (j_{!*} E[2])|_{M_x(y)} \otimes \rH_c^*(\Gm, \bQ_\ell) \times \rH^*(\Gm, \bQ_\ell) \otimes \Ecoeff.
    \end{equation*}
  \end{enumerate}
\end{prop}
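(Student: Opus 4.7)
The plan is to reduce the stalk computation to a compactly supported cohomology calculation on the fiber $p^{-1}((\pE,[\pT]))$, and then use the explicit description of length $1$ lower modifications from \cref{sec:calculation-of-all-length-1-lower-modifications} together with the stalk description of $\cL_E$ from \cref{lem:stalks-of-intermediate-extension-of-e}. First, proper base change gives
\begin{equation*}
  (\heckeglobal \Aut_E^0)|_{(\pE,[\pT])}
  = \rH_{\mathrm c}^*(F,\; \phi^* \Aut_E^0),
\end{equation*}
where $F := p^{-1}((\pE,[\pT])) \cap q^{-1}(\Bunrel 0)$ and $\phi := q|_F$; the intersection with $q^{-1}(\Bunrel 0)$ is harmless because $\Aut_E^0$ is supported on $\Bunrel 0$.

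Second, I would analyze $F$ and $\phi$ by invoking the classification of length $1$ lower modifications from \cref{sec:calculation-of-all-length-1-lower-modifications}. Because $x \in D$, the composition $\pi_0 \circ \phi \colon F \to \bP^1$ has degree $1$ rather than the degree $2$ that appeared in the generic case of \cref{prop:aute-0-is-local-system-on-open}: several flags in the ambient $\bP^1$ of modifications yield underlying bundles that are not in $\Bunrel 0$, and these are precisely excluded from $F$. I would then use the explicit formulas in the addendum to \cref{thm:summary-statement-of-modifications-normal-points}, together with the modification tables for $\pEtilde$ and $\pEhat$ from \cref{sec:lower-modifications-of-petilde} and \cref{sec:lower-modifications-of-pehat}, to show that $\pi_0 \circ \phi$ sends $F$ onto a neighbourhood of the single point $M_x(y) \in \bP^1$, with a unique geometric preimage. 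This identifies $\phi$, up to the residual stackiness in $F$, with a map to a parabolic torsion sheaf in $\cohpar$ supported at $M_x(y)$.

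Third, I would combine this with the stalk description of $\cL_E$ given in \cref{lem:stalks-of-intermediate-extension-of-e} and \cref{rmk:cohomological-property-of-e-via-maps} to read off the three formulas. The shift $[1]$ inherent in \cref{defn:aut-e} and the twist by $\Ecoeff$ appear automatically. The three cases of the proposition correspond to the three combinations of automorphism groups: a $\Gm \times \Gm$ factor on the $\pT$-side produces a tensor factor $\rH^*(\Gm, \bQ_\ell)$ because the stalk of $\cL_E$ at $k_{M_x(y)}^0$ is already $(\bar j_* E[1])|_{M_x(y)} \otimes \rH^*(\Gm, \bQ_\ell)$, while a $\Gm \times \Gm$ factor on the $\pE$-side produces a tensor factor $\rH_{\mathrm c}^*(\Gm, \bQ_\ell)$ because the corresponding $\Gm$-gerbe fibre of $F$ is integrated out by $\bR q_!$. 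This also explains the extra cohomological shift by $[1]$ in cases (2) and (3).

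The main obstacle will be the careful bookkeeping: precisely identifying which flags are excluded from $F$ in each of the sub-cases (distinguished by whether $\pi(\pE) \in D$ or $\Supp \pT \in D$, and by the automorphism type of each), and tracking the $\rH^*(\Gm)$ versus $\rH_{\mathrm c}^*(\Gm)$ factors through the pullback and pushforward. The identification $\pi_0(\phi(F)) = M_x(y)$ in particular encodes the hidden symmetry between the roles of $\pE$ and $\pT$ emphasised in the introduction, and matches the M\"obius-based correction term ``$z \in D$ and $y = M_z(x)$'' appearing in \cref{thm:formula-hecke-operators}.
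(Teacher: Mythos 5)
Your overall strategy coincides with the paper's (whose proof is a two‑line pointer to exactly this): proper base change to reduce the stalk to $\rH_{\mathrm c}^*$ of $\phi^*\Aut_E^0$ on the fiber $F$, then feed in the classification of modifications from \cref{sec:calculation-of-all-length-1-lower-modifications} and the stalks of $j_{!*}E$ from \cref{lem:stalks-of-intermediate-extension-of-e}. However, two of your key geometric assertions are wrong, and the second contradicts the statement you are proving. First, the claim that $\pi_0 \circ \phi$ sends $F$ ``onto a neighbourhood of $M_x(y)$ with a unique geometric preimage,'' so that $\phi$ is essentially a map to a torsion sheaf supported at $M_x(y)$, fails in case (1). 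Take $\pi_1(\pE) = y \notin D$ and $\pT = k_x^{(1,0)}$ with $x \in D$: the fiber is an $\bA^1$ of modifications on whose open orbit $q$ is \emph{injective}, and $\pi_0 \circ \phi$ is a degree~$1$ map whose image is the \emph{complement} of $M_x(y)$ --- that is the one point the image misses, not the point it concentrates on. The stalk $(j_{!*}E)|_{M_x(y)}$ then arises not by restriction but from the long exact sequence for $\rH_{\mathrm c}^*$ of the open complement together with the vanishing $\rH^*(\bP^1, \bar j_{!*}E[1]) = 0$ (\cref{prop:cohomology-groups-of-intermediate-extension-are-zero}) and the vanishing on the crosses $\cP_x$ (\cref{lem:stalks-of-intermediate-extension-of-e}, part 2). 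This mechanism, which is what makes the rank and Euler characteristic come out right (compare the degree~$2$ computation in \cref{prop:aute-0-is-local-system-on-open}), is absent from your argument; a map concentrated at $M_x(y)$ with an $\bA^1$ fiber happens to give the right rank in case (1) but by the wrong geometry, and it does not reproduce the other cases.

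Second, your rule for the $\Gm$-factors --- $\Gm \times \Gm$ on the $\pT$-side gives $\rH^*(\Gm, \bQ_\ell)$ via the stalk at $k^0$, $\Gm \times \Gm$ on the $\pE$-side gives $\rH_{\mathrm c}^*(\Gm, \bQ_\ell)$ via the gerbe fiber --- predicts two different answers for the two sub-cases of part (2), whereas the proposition asserts $\rH_{\mathrm c}^*(\Gm, \bQ_\ell)$ in both. The correct bookkeeping is: if $\pT \cong k_x^0$, the fiber of $p$ over $(\pE, [\pT])$ is a $\Gm$-torsor over the unique modification $T_x\pE$ (because $[\pT]$ retains a $\Gm$ of automorphisms in $\barcohpar$), contributing $\rH_{\mathrm c}^*(\Gm, \bQ_\ell)$; if instead $\Aut(\pE) = \Gm \times \Gm$, the generic orbit of modifications is a $\Gm$ collapsed by $q$, again contributing $\rH_{\mathrm c}^*(\Gm, \bQ_\ell)$; and the ordinary cohomology $\rH^*(\Gm, \bQ_\ell)$ coming from the stalk of $j_{!*}E$ at a $k^0$-point enters only when the \emph{target} modification has $\Gm \times \Gm$ automorphisms, which forces both $\pE$ and $\pT$ to have $\Gm \times \Gm$, i.e.\ only in case (3). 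You should redo the case analysis with these two corrections before the ``careful bookkeeping'' you defer to can succeed.
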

\begin{proof}
  This follows directly from our classification of these maps in 
  \cref{sec:calculation-of-all-length-1-lower-modifications};
  see the explanation just above the proposition.
\end{proof}

\begin{cor}
  \label{cor:cohomological-degrees-of-hecketransform-of-aute0}
  The cohomology sheaves of the complex $(\heckeglobal \Aut_E^0)|_{\Bunrel 1 \times \barcohpar}$ 
  satisfy the following:
  \begin{align*}
    \sheafcohom^i(\heckeglobal \Aut_E^0) &= 0 \qquad \text{for $i < -1$ and $i > 1$,} \\
    \sheafcohom^{-1}(\heckeglobal \Aut_E^0) &\text{ is supported on an open of $\Bunrel 1 \times \barcohpar$,} \\
    \sheafcohom^{0}(\heckeglobal \Aut_E^0) &\text{ is supported on an codimension 2 set, and} \\
    \sheafcohom^{1}(\heckeglobal \Aut_E^0) &\text{ is supported on an codimension 4 set.}
  \end{align*}
\end{cor}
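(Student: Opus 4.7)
The plan is to deduce the corollary from the stalk descriptions in \cref{prop:aute-0-is-local-system-on-open} and \cref{prop:stalks-of-hecke-transform-of-aute0-when-at-least-one-point-is-special}, together with the vanishing of \cref{prop:aut-e-0-vanishes-outside-relevant-locus}, by stratifying $\Bunrel 1 \times \barcohpar$ according to the isomorphism types of the automorphism groups of $\pE$ and $\pT$.

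First I would introduce four locally closed strata: (i) $\Aut(\pE) = \Aut(\pT) = \Gm$; (ii) $\Aut(\pE) = \Gm \times \Gm$ and $\Aut(\pT) = \Gm$; (iii) $\Aut(\pE) = \Gm$ and $\Aut(\pT) = \Gm \times \Gm$; and (iv) both automorphism groups are $\Gm \times \Gm$. Using the isomorphism $\alpha \colon \cohpar \isom \Bunrel 1$ of \cref{defn:alpha} and the local description of $\cohpar$ near $D$ recalled in \cref{sec:moduli-spaces-of-parabolic-sheaves}, the loci of objects with $\Gm \times \Gm$-automorphisms in $\Bunrel 1$ and in $\barcohpar$ are finite disjoint unions of residual gerbes, of dimensions $-2$ and $-1$ respectively. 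Since $\Bunrel 1 \times \barcohpar$ has dimension $1$, this makes strata (ii) and (iii) each codimension $2$ and stratum (iv) codimension $4$.

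Next I would compute the cohomological support of the stalks on each stratum. On the dense open piece of stratum (i) lying over $(\bP^1 \setminus D) \times (\bP^1 \setminus D)$, \cref{prop:aute-0-is-local-system-on-open} gives a rank $4$ local system in degree $-1$. On all of stratum (i), \cref{prop:stalks-of-hecke-transform-of-aute0-when-at-least-one-point-is-special}(1) expresses the stalk as $(j_{!*}E[1])|_{M_x(y)} \otimes \Ecoeff$, which lives purely in degree $-1$ since $j_{!*}E$ is a sheaf in degree $0$ (as the intermediate extension of a local system on a curve). On strata (ii) and (iii), part (2) gives a stalk of the form $(j_{!*}E[2])|_{M_x(y)} \otimes \rH_c^*(\Gm, \bQ_\ell) \otimes \Ecoeff$, which lives in degrees $-1$ and $0$ because $\rH_c^*(\Gm, \bQ_\ell)$ is concentrated in degrees $1$ and $2$. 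On stratum (iv), part (3) adds a tensor factor $\rH^*(\Gm, \bQ_\ell)$ supported in degrees $0$ and $1$, yielding stalks in degrees $-1$, $0$, and $1$.

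Assembling these observations yields the four bulleted claims: $\sheafcohom^i$ vanishes outside $\{-1, 0, 1\}$; $\sheafcohom^{-1}$ has non-zero stalk at every point of $\Bunrel 1 \times \barcohpar$, so its support is the whole stack and in particular contains an open set; $\sheafcohom^0$ vanishes on stratum (i) and is supported on the codimension-$2$ union of strata (ii), (iii), (iv); and $\sheafcohom^1$ is supported on the codimension-$4$ stratum (iv). The only real obstacle is careful bookkeeping with the codimensions in the stacky setting, which is mechanical once the residual gerbe structure of the special points of $\Bunrel 1$ and $\barcohpar$ has been recorded.
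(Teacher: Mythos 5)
Your proposal is correct and follows the same route as the paper, which deduces the corollary directly from \cref{prop:stalks-of-hecke-transform-of-aute0-when-at-least-one-point-is-special} (together with \cref{prop:aute-0-is-local-system-on-open} for the generic locus) and the fact that a point lies in codimension equal to the dimension of its automorphism group; you have merely written out the stratification and degree bookkeeping that the paper leaves implicit. One small slip: \cref{prop:stalks-of-hecke-transform-of-aute0-when-at-least-one-point-is-special}(1) applies only where at least one of $\pi(\pE)$, $\Supp\pT$ lies in $D$, so it covers the complement of the dense open piece of your stratum (i) rather than ``all of stratum (i)''---but since you treat the dense open piece separately via \cref{prop:aute-0-is-local-system-on-open}, the argument is complete.
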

\begin{proof}
  This is immediate from the proposition and the fact that a point
  $\pE \in \Bunpar^1$ lies in codimension $\dim \Aut(\pE)$.
\end{proof}

\begin{proof}[Proof of \cref{thm:interemediate-extension-of-local-system}]
  \label{pf:proof-of-thm-interemediate-extension-of-local-system}
  Let $i \colon Z \incl \Bunpar^1 \times \barcohpar$ be a locally closed embedding,
  $L$ a local system on $Z$ and $r \in \bZ$ such that
  $i_{!*} L[r]$
  is one of the shifted simple perverse sheaves in the decomposition of $\heckeglobal \Aut_E^0$.
  The cohomology sheaves of $i_{!*} L[r]$ satisfy the conditions in
  \cref{cor:cohomological-degrees-of-hecketransform-of-aute0};
  in particular,
  \begin{equation}
    \label{eq:proof-of-thm-interemediate-extension-of-local-system-dimension-counting}
    \begin{split}
      \cH^i(L[r]) &= 0 \text{ for all $i \in \bZ$ with $i < s_0$ or $s_1 < i$, where} \\
      (s_0, s_1) &=
      \begin{cases}
        (-1, -1) & \text{if $\dim Z = 1, 0$} \\
        (-1, 0) & \text{if $\dim Z = -1, -2$} \\
        (-1, 1) & \text{if $\dim Z \leq -3$}
      \end{cases}.
    \end{split}
  \end{equation}

  Let $m \in \bZ$ denote the perverse degree of $i_{!*} L[r]$.
  Then $r = \dim Z - m$
  and by \cref{eq:proof-of-thm-interemediate-extension-of-local-system-dimension-counting},
  \begin{equation*}
    s_0 \leq \dim Z - m \leq s_1.
  \end{equation*}

  The Verdier dual of $i_{!*} L[r]$
  is a simple perverse sheaf in the decomposition of
  $\pdual \heckeglobal \Aut_E^0 = \heckeglobal \Aut_{E^\vee}^0$,
  and hence satisfies the same conditions.
  It lies in perverse degree $-m$, so we conclude that
  $s_0 \leq \dim Z + m \leq s_1$ also holds.
  Adding these two inequalities,
  we find $s_0 \leq \dim Z \leq s_1$,
  and this can only hold for $\dim Z = 1 = \dim(\Bunpar^1 \times \barcohpar)$.
  Hence, every simple perverse sheaf in the decomposition of $\heckeglobal \Aut_E^0$
  comes from a local system on a dense open.
  Since $(\heckeglobal \Aut_E^0)|_{\pi^{-1}(\bP^1 \setminus D) \times (\bP^1 \setminus D)}$
  is a local system
  (\cref{prop:aute-0-is-local-system-on-open}),
  we conclude that $\heckeglobal \Aut_E^0$ is indeed the intermediate extension of its
  restriction to $\pi^{-1}(\bP^1 \setminus D) \times (\bP^1 \setminus D)$.
  We had already proven that $\heckeglobal \Aut_E^0$ vanishes outside of $\Bunrel 1 \times \barcohpar$
  (\cref{prop:aut-e-0-vanishes-outside-relevant-locus}),
  so this completes the proof.
\end{proof}

\section{Proof of the Hecke property}
\label{sec:proof-of-hecke-property}

The goal of this section is
\cref{thm:aute-is-indeed-hecke-eigensheaf},
which says that $\Aut_E$ is the Hecke eigensheaf associated to the local system $E$.

\begin{lemma}
  \label{lem:pullback-by-alpha-is-hecke-transform-restricted-to-e-tilde}
  Let $F \in \boundedderivcat(\Bunpar^1, \bQ_\ell)$ be a complex that is supported on
  $\Bunrel 1 \subset \Bunpar^1$.
  Then $(\heckeglobal F)|_{\BAut(\pEtilde) \times \barcohpar}$
  descends to
  $\BAut(\pEtilde)/\Gm \times \barcohpar$.
  This descended complex is
  $\alpha^*(F[2]|_{\Bunrel 1})$.
\end{lemma}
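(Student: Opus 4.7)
The plan is to use proper base change to restrict the Hecke correspondence to $\BAut(\pEtilde) \times \barcohpar$, then exploit the support hypothesis on $F$ to cut down to $\cHcErel$, where the explicit isomorphisms of Lemmas~\ref{lem:chcerel-to-bunrel1-isom} and~\ref{lem:prel-is-isomorphism} take over. The shift by $[2]$ will emerge from the geometry of a section of the $B\Gm$-gerbe $\pi \colon \BAut(\pEtilde) \times \barcohpar \to \BAut(\pEtilde)/\Gm \times \barcohpar$.

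First, I would apply proper base change for $\bR p_!$ to the inclusion $i \colon \BAut(\pEtilde) \times \barcohpar \incl \Bunpar^2 \times \barcohpar$. Writing $p_{\pEtilde}, q_{\pEtilde}$ for the pullbacks of $p, q$ to $\cHcE$, this gives
\begin{equation*}
  (\heckeglobal F)|_{\BAut(\pEtilde) \times \barcohpar}
  = \bR p_{\pEtilde, !} q_{\pEtilde}^* F.
\end{equation*}
Since $F$ is supported on $\Bunrel 1$, the complex $q_{\pEtilde}^* F$ is supported on $q_{\pEtilde}^{-1}(\Bunrel 1) = \cHcErel$, so the expression reduces to $(p^{\rel}_0)_! (q^{\rel})^* F$, where $p^{\rel}_0 := p_{\pEtilde}|_{\cHcErel}$ and $q^{\rel} := q_{\pEtilde}|_{\cHcErel}$.

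Next, I would bring in the two isomorphisms from the earlier lemmas. Because $q^{\rel}$ is an isomorphism and $\alpha = q^{\rel} \circ (p^{\rel})^{-1}$ by Definition~\ref{defn:alpha}, we have $(q^{\rel})^* F = (p^{\rel})^* \alpha^*(F|_{\Bunrel 1})$. Substituting and using that $p^{\rel}$ is an isomorphism, the pushforward $(p^{\rel}_0)_! (p^{\rel})^* \alpha^* F$ identifies with $\sigma_! \alpha^* F$ for the section $\sigma := p^{\rel}_0 \circ (p^{\rel})^{-1}$ of $\pi$. On automorphism groups $\sigma$ embeds the quotient $\Gm$ diagonally into $\Aut(\pEtilde) = \Gm \times \Gm$ (the diagonal being exactly the stabilizer of a generic short exact sequence), so $\sigma$ is the canonical section of the trivial $B\Gm$-gerbe $\pi$ arising from the direct summand $\Aut((\cO(2),\emptyset))$ of $\Aut(\pEtilde)$.

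The remaining, and main, step is to show $\sigma_! H = \pi^* H [2]$ for any complex $H$ on $\BAut(\pEtilde)/\Gm \times \barcohpar$; applied to $H = \alpha^*(F|_{\Bunrel 1})$ this yields both the descent claim and the identification of the descended complex. The descent itself is automatic because $\ell$-adic sheaves on a $B\Gm$-gerbe are pulled back from the base. The shift by $[2]$ reflects the fact that $\sigma$ is smooth of relative dimension $1$ and is, in this precise sense, dual to the $(-1)$-dimensional gerbe $\pi$; concretely, the comparison of $\sigma_!$ with $\pi^*$ is pinned down by proper base change along the cartesian square with fiber $(\Gm \times \Gm)/\Gm_\Delta \cong \Gm$ and by the top compactly supported cohomology class of $\Gm$, which provides exactly the degree shift. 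The delicate bookkeeping of this last shift — reconciling the conventions for the Hecke operator $p_! q^*$ with the perverse normalization implicit elsewhere in the paper — is the principal technical obstacle.
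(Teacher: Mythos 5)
Your first three steps --- base change along $i$, cutting down to $\cHcErel$ via the support hypothesis, and rewriting the answer as $\sigma_!\,\alpha^*(F|_{\Bunrel 1})$ with $\sigma = p^{\rel}_0\circ(p^\rel)^{-1}$ --- are correct, and they are exactly the unpacking of the paper's one-sentence proof (which simply says the claim follows from the definitions of $\heckeglobal$ and $\alpha$). The genuine gap is the final identity $\sigma_! H = \pi^* H[2]$, which is false. Since $\sigma$ is representable with fibers $(\Gm\times\Gm)/\Gm \cong \Gm$, the projection formula gives $\sigma_! H \cong \pi^* H \otimes \sigma_!\bQ_\ell$, and $\sigma_!\bQ_\ell \cong \bQ_\ell[-1]\oplus\bQ_\ell(-1)[-2]$ because \emph{both} $\rH^1_{\mathrm c}(\Gm,\bQ_\ell)$ and $\rH^2_{\mathrm c}(\Gm,\bQ_\ell)$ are nonzero: a compactly supported pushforward along a relative curve is never a single shift of $\pi^* H$, and in any case the shift it produces goes in the direction opposite to the $[2]$ you need. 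You can see the problem without any gerbe bookkeeping by computing the stalk at a geometric point over $(\pEtilde,[k_x])$ with $x\notin D$: the fiber of $p$ is $\bP^1(\cEtilde|_x)$, and $q^*F$ is supported on the open orbit $\Gm$, where it is constant with value $F_{\alpha(k_x)}$, so the stalk is $\bR\Gamma_{\mathrm c}(\Gm,\bQ_\ell)\otimes F_{\alpha(k_x)}[2]$ --- two cohomological degrees, whereas $\alpha^*(F[2]|_{\Bunrel 1})$ occupies one.

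What your computation actually establishes is
\begin{equation*}
  (\heckeglobal F)|_{\BAut(\pEtilde)\times\barcohpar}
  \;\cong\;
  \pi^*\alpha^*(F|_{\Bunrel 1}) \otimes \bR\Gamma_{\mathrm c}(\Gm,\bQ_\ell)[2]
  \;\cong\;
  \sigma_!\,\alpha^*(F[2]|_{\Bunrel 1}).
\end{equation*}
This does prove the descent claim, but it identifies the descended complex with $\alpha^*(F[2]|_{\Bunrel 1})$ only up to the constant tensor factor $\bR\Gamma_{\mathrm c}(\Gm,\bQ_\ell)[2]$. That factor is real, not an artifact of your bookkeeping: it is the same $\rH^*_{\mathrm c}(\Gm,\bQ_\ell)$ that appears in the stalk formulas of \cref{prop:stalks-of-hecke-transform-of-aute0-when-at-least-one-point-is-special} whenever the parabolic bundle has automorphism group $\Gm\times\Gm$, and it is harmless for the only use of the lemma (\cref{prop:hecke-transform-aute0-is-symmetric}), which needs the identification only up to shifts and constant factors. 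So the repair is not a cleverer proof of $\sigma_! = \pi^*[2]$ --- no such statement holds --- but to carry the factor $\bR\Gamma_{\mathrm c}(\Gm,\bQ_\ell)[2]$ through, or to phrase the conclusion as the $\sigma_!$ of $\alpha^*(F[2]|_{\Bunrel 1})$ rather than its $\pi^*$-pullback.
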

\begin{proof}
  This follows quite easily from the definition of $\heckeglobal = \bR p_! q^*[2]$
  and the definition of $\alpha$,
  which was defined as $q^\rel \circ (p^\rel)^{-1}$,
  where $p^\rel$ and $q^\rel$
  are restrictions of $p$ and $q$,
  respectively,
  to a substack of $\cH$ that only classifies modifications of $\pEtilde$
  that lie in the relevant locus.
\end{proof}

We define the ``large diagonal''
\begin{equation*}
  \Delta^+ := \im (\barcohpar \times_{\bP^1} \barcohpar \incl \barcohpar \times \barcohpar)
\end{equation*}
where the fiber product over $\bP^1$ is with respect to the support map
$\Supp \colon \barcohpar \to \bP^1$.
By a symmetric complex of constructible sheaves on $\barcohpar \times \barcohpar$,
we mean a complex $F$ such that there exists an isomorphism
$\sigma^*(F) \isom F$,
where $\sigma$ denotes the automorphism
of $\barcohpar \times \barcohpar$
that interchanges the two factors in the product.

\begin{prop}
  \label{prop:hecke-transform-aute0-is-symmetric}
  The complex $(\alpha \times \id)^* ((\heckeglobal \Aut_E^0)|_{\Bunrel 1 \times \barcohpar})$
  on $\cohpar \times \barcohpar$
  descends to a complex on $\barcohpar \times \barcohpar$
  whose restriction to $(\barcohpar \times \barcohpar) \setminus \Delta^+$ is symmetric.
\end{prop}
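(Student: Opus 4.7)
The assertion consists of a descent claim (from $\cohpar \times \barcohpar$ to $\barcohpar \times \barcohpar$) together with the symmetry of the descended complex on the complement of $\Delta^+$.

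The descent is essentially automatic: the map $\cohpar \to \barcohpar$ is a $\Gm$-rigidification, and any $\Gm$-action on an $\ell$-adic sheaf is trivial (as already invoked just above \cref{defn:aut-e}), so every $\ell$-adic complex on $\cohpar$ descends uniquely to $\barcohpar$, and the same applies to the first factor of our product.

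My plan for the symmetry is to realize the pulled-back complex as (the descent of) a restriction of the iterated Hecke transform $\heckeglobal(\heckeglobal \Aut_E^0)$, and then exploit the classical commutativity of Hecke modifications at distinct points. Let $\heckeglobal(\heckeglobal \Aut_E^0)$ denote the iterated Hecke transform on $\Bunpar^2 \times \barcohpar_2 \times \barcohpar_1$, obtained by applying $\heckeglobal$ to the $\Bunpar^1$-factor of $\heckeglobal \Aut_E^0$. Applying \cref{lem:pullback-by-alpha-is-hecke-transform-restricted-to-e-tilde} to each fiber of $\heckeglobal \Aut_E^0$ over $\barcohpar_1$ -- each such fiber $\heckelocal{\pT_1} \Aut_E^0$ being supported on $\Bunrel 1$ by \cref{prop:aut-e-0-vanishes-outside-relevant-locus} -- identifies $(\alpha \times \id)^*((\heckeglobal \Aut_E^0)|_{\Bunrel 1 \times \barcohpar})[2]$ on $\cohpar \times \barcohpar_1$ with (a $\Gm$-descent of) $\heckeglobal(\heckeglobal \Aut_E^0)|_{\BAut(\pEtilde) \times \barcohpar_2 \times \barcohpar_1}$. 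Under this identification, the swap of the two factors of the proposition corresponds precisely to the swap $\barcohpar_2 \leftrightarrow \barcohpar_1$ of the two $\barcohpar$ factors in the iterated Hecke.

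To prove that $\heckeglobal(\heckeglobal \Aut_E^0)$ is symmetric in its two $\barcohpar$ factors on the complement of the diagonal, I use its description as $\bR p^{(2)}_! (q^{(2)})^* \Aut_E^0$, where the double Hecke stack $\cH^{(2)}$ classifies flags $\pF_0 \subset \pF_1 \subset \pF_2$ with length $1$ successive quotients, $p^{(2)}$ records $(\pF_2, [\pF_2/\pF_1], [\pF_1/\pF_0])$, and $q^{(2)}$ records $\pF_0$. Over the open locus $U \subset \barcohpar \times \barcohpar$ where the two successive cokernels have disjoint supports, the length $2$ quotient $\pF_2/\pF_0$ canonically splits as a direct sum of its two length $1$ parabolic torsion summands, and using this I construct an involution $\tau$ on $(p^{(2)})^{-1}(\Bunpar^2 \times U)$ that replaces $\pF_1$ by the unique alternative subsheaf $\pF_1'$ between $\pF_0$ and $\pF_2$ with the two successive cokernels interchanged. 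Since $\tau$ commutes with $q^{(2)}$ and intertwines $p^{(2)}$ with $(\id \times \sigma) \circ p^{(2)}$, where $\sigma$ swaps the two $\barcohpar$ factors, proper base change gives the desired symmetry.

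The main technical obstacle will be constructing $\tau$ as a morphism of algebraic stacks (rather than only on field-valued points), which requires a careful verification that the canonical splitting of the length $2$ parabolic torsion sheaf $\pF_2/\pF_0$ works in families and that the resulting $\pF_1'$ has the expected parabolic structure. This should follow from a direct analysis of parabolic torsion sheaves with support in a prescribed two-point set, but it will involve a nontrivial amount of bookkeeping with the parabolic degrees.
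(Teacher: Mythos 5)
Your proposal is correct and follows essentially the same route as the paper: both identify $(\alpha \times \id)^*((\heckeglobal \Aut_E^0)|_{\Bunrel 1 \times \barcohpar})$ with a restriction of the iterated Hecke transform via \cref{lem:pullback-by-alpha-is-hecke-transform-restricted-to-e-tilde} and then invoke the commutativity of Hecke modifications at distinct points of $\bP^1$. The only difference is one of detail: the paper simply cites this commutativity, whereas you spell out the underlying involution on the double Hecke stack over the locus of disjoint supports, which is the standard justification.
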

\begin{proof}
  By \cref{lem:pullback-by-alpha-is-hecke-transform-restricted-to-e-tilde},
  $(\alpha \times \id)^*((\heckeglobal \Aut_E^0)|_{\Bunrel 1 \times \barcohpar})$
  can be identified with a shift of
  \begin{equation*}
    ((\heckeglobal \times \id_{\boundedderivcat(\barcohpar)}) \heckeglobal \Aut_E^0)|_{\BAut(\pE) \times \barcohpar \times \barcohpar},
  \end{equation*}
  i.e., where we apply the global Hecke operator twice,
  which means we push and pull along the composition of correspondences

  \begin{minipage}{\textwidth} 
    \begin{equation*}
      \hidewidth
      \Bunpar^2 \times \barcohpar \times \barcohpar
      \xleftarrow{p \times \id} \cH \times \barcohpar \xrightarrow{q \times \id}
      \Bunpar^1 \times \barcohpar
      \xleftarrow{p} \cH \xrightarrow{q}
      \Bunpar^0.
      \hidewidth
    \end{equation*}
  \end{minipage}

  Because modifications at different points in $\bP^1$ commute,
  the restriction of this complex to $\Bunpar^2 \times ((\barcohpar \times \barcohpar) \setminus \Delta^+)$
  is symmetric with respect to the automorphism that interchanges the two factors $\barcohpar$.
\end{proof}

\begin{lemma}
  \label{lem:hecke-transform-of-aute0-has-one-correct-fiber}
  Let $x \in D$
  and let $\pT$ be either
  $k_{x}^{(1,0)}$ or $k_x^{(0,1)}$.
  Then
  \begin{equation*}
    (\heckeglobal \Aut_E^0)|_{\Bunrel 1 \times \{\pT\}}
    \cong \alpha_*(j_{!*} M_x^*(E)[1]) \otimes \Ecoeff.
  \end{equation*}
  Likewise,
  \begin{equation*}
    (\heckeglobal \Aut_E^0)|_{\alpha(\pT)  \times \barcohpar}
    \cong \alpha_*(j_{!*} M_x^*(E)[1]) \otimes \Ecoeff.
  \end{equation*}

\end{lemma}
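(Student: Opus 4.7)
The plan is to prove the first statement directly, and then derive the second from the symmetry given by \cref{prop:hecke-transform-aute0-is-symmetric}. Pulling back along $\alpha$, the first statement becomes an identification of complexes on the one-dimensional stack $\cohpar$. Both complexes have explicit stalk descriptions: the left-hand side via \cref{prop:stalks-of-hecke-transform-of-aute0-when-at-least-one-point-is-special}, and the right-hand side via \cref{lem:stalks-of-intermediate-extension-of-e}.

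First I identify the two sides on the open $\bP^1 \setminus D \subset \cohpar$. Since $\pT$ has automorphism group $\Gm$, for a point of $\cohpar$ supported at $y \in \bP^1 \setminus D$ (whose image under $\alpha$ also has automorphism group $\Gm$), \cref{prop:stalks-of-hecke-transform-of-aute0-when-at-least-one-point-is-special}(1) gives the LHS stalk as $(j_{!*} E[1])|_{M_x(y)} \otimes \Ecoeff = E|_{M_x(y)}[1] \otimes \Ecoeff$, matching the stalk of the RHS. That the two sides agree as local systems, not merely fiberwise, will follow because the monodromy of both is controlled by the ambient intermediate-extension structure guaranteed by \cref{thm:interemediate-extension-of-local-system} and \cref{prop:aute-0-is-local-system-on-open}.

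To extend the identification across the boundary, I will show that both sides are intermediate extensions of this common local system. The RHS is an intermediate extension by construction. For the LHS, I compare stalks at the boundary strata $k_y^{(1,0)}, k_y^{(0,1)}, k_y^0$ for $y \in D$ using parts (1)--(3) of \cref{prop:stalks-of-hecke-transform-of-aute0-when-at-least-one-point-is-special}, noting that the isomorphism $\rH_c^*(\Gm, \bQ_\ell) \cong \rH^*(\Gm, \bQ_\ell)[-1]$ reconciles the shifts with the RHS stalks from \cref{lem:stalks-of-intermediate-extension-of-e}. Combined with the decomposition theorem of \cref{prop:hecke-transform-decomposes}, the stalk data are consistent only with the LHS being the intermediate extension of its open restriction.

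Once the first statement is established, the second follows from \cref{prop:hecke-transform-aute0-is-symmetric}: its symmetry identifies the two restrictions away from the diagonal point $(\pT, \pT)$, and the common intermediate-extension structure allows this identification to be extended uniquely across that point. The main obstacle lies in the stalk comparison at the boundary strata --- ruling out extra perverse summands supported on the closed strata of $\cohpar$ in the decomposition from \cref{prop:hecke-transform-decomposes}. This requires careful bookkeeping with Tate twists and cohomological shifts, but no new geometric input beyond the stalk formulas already established.
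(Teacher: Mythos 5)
Your overall strategy (reduce to the stalk computations of \cref{prop:stalks-of-hecke-transform-of-aute0-when-at-least-one-point-is-special} and \cref{lem:stalks-of-intermediate-extension-of-e}, then argue both sides are intermediate extensions of a common local system) is close in spirit to the paper's one-line proof, but there is a genuine gap at the step where you pass from ``the stalks agree on the open part'' to ``the two sides agree as local systems.'' Stalk data cannot identify a local system: every fiber of any rank~2 local system on $\bP^1 \setminus D$ is just a $2$-dimensional vector space, so knowing that the stalk at $\alpha(k_y)$ is $E|_{M_x(y)}[1] \otimes \Ecoeff$ for each $y$ separately does not distinguish $M_x^* E$ from $E$ or from any other rank~2 local system. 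Your proposed fix --- that the monodromy is ``controlled by the ambient intermediate-extension structure'' of \cref{thm:interemediate-extension-of-local-system} --- does not work, because the slice $\Bunrel 1 \times \{\pT\}$ with $\Supp \pT = x \in D$ lies entirely in the \emph{closed complement} of the open locus $\pi^{-1}(\bP^1\setminus D) \times (\bP^1 \setminus D)$ on which $\heckeglobal \Aut_E^0$ is known to be a local system; the restriction of an intermediate extension to a closed slice is not determined by the open local system without further geometric input.

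The missing ingredient is the geometric identification from \cref{sec:calculation-of-all-length-1-lower-modifications} (in particular \cref{thm:qualitative-description-of-modifications-hat-ce}(2) and the discussion preceding \cref{prop:stalks-of-hecke-transform-of-aute0-when-at-least-one-point-is-special}): over $\Bunrel 1 \times \{\pT\}$ the restriction of $q$ to $p^{-1}(\pE,[\pT]) \cap q^{-1}(\Bunrel 0)$ composed with $\pi_0$ has degree~$1$, and as $\pE$ varies this family of modifications realizes, in the charts $\alpha$, precisely the M\"obius transformation $M_x$. Hence the restriction of $\bR p_! q^* \Aut_E^0$ to the slice is literally a pullback of $\Aut_E^0$ along a concrete map $\Bunrel 1 \to \Bunrel 0$, which produces the isomorphism of \emph{complexes} (monodromy included), not merely a fiberwise match; the stalk formulas then only confirm the boundary behavior. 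A second, smaller inaccuracy: for the second assertion, \cref{prop:hecke-transform-aute0-is-symmetric} gives symmetry only away from the large diagonal $\Delta^+$, whose intersection with the slice $\alpha(\pT) \times \barcohpar$ is the whole fiber $\{\pT\} \times \Supp^{-1}(x)$, not the single point $(\pT,\pT)$; extending across it again requires the boundary stalk computations rather than a uniqueness-of-extension argument alone.
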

\begin{proof}
  This follows from
  our calculations of all the stalks $(\heckeglobal \Aut_E^0)|_{(\pE, \pT)}$
  when either $\pE$ or $\pT$ lies over $D$
  (\cref{prop:stalks-of-hecke-transform-of-aute0-when-at-least-one-point-is-special}).
\end{proof}

\begin{prop}
  \label{prop:aut-e-is-eigensheaf-in-degree-0}
  Let $E$ be an irreducible pure rank 2 local system on $\bP^1 \setminus D$.
  Then there is an isomorphism
  \begin{equation*}
    \heckeglobal \Aut_E^0 \cong \Aut_E^1 \boxtimes j_{!*} E[1].
  \end{equation*}
\end{prop}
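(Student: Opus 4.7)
The plan is to reduce the desired isomorphism to a statement about intermediate extensions of rank~$4$ local systems on a common dense open of $\Bunrel 1 \times \barcohpar$, and then pin down this local system by combining the symmetry of the double Hecke transform with the explicit slice computations that we have at hand.

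First I would observe that both sides are supported on $\Bunrel 1 \times \barcohpar$: for the LHS this is Theorem~\ref{thm:interemediate-extension-of-local-system}(b), and for the RHS it follows because $\Aut_E^1$ is supported on $\Bunrel 1$ by \cref{defn:aut-e}. Moreover, both sides are, up to a shift, intermediate extensions of rank~$4$ local systems from $U := \pi^{-1}(\bP^1 \setminus D) \times (\bP^1 \setminus D)$. For the LHS this is Theorem~\ref{thm:interemediate-extension-of-local-system}(a). For the RHS it follows from the fact that $\Aut_E^1|_{\Bunrel 1} = \alpha_* j_{!*} E[1]$ is, via $\alpha$, the intermediate extension of $E[1]$ along the open embedding $\bP^1 \setminus D \incl \barcohpar$, together with the fact that an external tensor product of intermediate extensions on smooth factors is again an intermediate extension. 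Hence both sides are simple perverse sheaves on the same support with the same generic rank, and it is enough to exhibit a non-zero morphism between them.

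Next I would construct this morphism. Pull both sides back along $(\alpha \times \id) \colon \cohpar \times \barcohpar \to \Bunrel 1 \times \barcohpar$. By \cref{prop:hecke-transform-aute0-is-symmetric}, the pulled-back LHS is symmetric on $(\barcohpar \times \barcohpar) \setminus \Delta^+$, while the pulled-back RHS is $(j_{!*} E[1]) \boxtimes (j_{!*} E[1])$, which is visibly symmetric. I would now compare the two on a single non-empty test slice, for instance $\Bunrel 1 \times \{k_x^{(1,0)}\}$ for some $x \in D$: by \cref{lem:hecke-transform-of-aute0-has-one-correct-fiber} the LHS restricts there to $\alpha_*(j_{!*} M_x^* E[1]) \otimes \Ecoeff$, while the RHS restricts to $\Aut_E^1 \otimes (j_{!*} E[1])|_{k_x^{(1,0)}}$; by the stalk formula of \cref{lem:stalks-of-intermediate-extension-of-e} and the comparison of $E$ with its $M_x$-pullback coming from the M\"obius symmetries $M_x$, these canonically agree. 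In parallel, \cref{lem:pullback-by-alpha-is-hecke-transform-restricted-to-e-tilde} applied with $F = \Aut_E^1$ computes the restriction of the LHS to $\BAut(\pEtilde) \times \barcohpar$ as $\alpha^*(\Aut_E^1[2])|_{\Bunrel 1} = j_{!*} E[3]$, which matches the RHS restricted there. These compatible matches, one on a "vertical" slice and one on a "horizontal" slice related by the swap symmetry, are enough to produce a nonzero morphism from the RHS to the LHS via adjunction. Since both are simple perverse sheaves on the same support with the same generic rank, any such nonzero morphism is an isomorphism.

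The main obstacle will be the last step: upgrading the slice-level agreement into a genuine morphism of perverse sheaves on the entire support, rather than just a stalkwise isomorphism. A priori, a symmetric rank~$4$ local system on $(\bP^1 \setminus D)^2$ whose fiber over one coordinate is $E$ need not be $E \boxtimes E$; ruling out exotic possibilities uses the symmetry of \cref{prop:hecke-transform-aute0-is-symmetric} together with the specific slice computations above, and the fact that the LHS is a single simple perverse sheaf (so its local-system structure on $U$ is determined by any of its non-degenerate fibers together with the symmetry constraint).
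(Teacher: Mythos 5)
Your setup is the right one (symmetry of the double Hecke transform plus the slice computations of \cref{lem:hecke-transform-of-aute0-has-one-correct-fiber}), but there is a genuine gap at exactly the point you flag as "the main obstacle", and the way you propose to get around it is circular. You assert that both sides are \emph{simple} perverse sheaves, so that any non-zero morphism is an isomorphism. The right-hand side is indeed simple ($E \boxtimes E$ is irreducible). But the left-hand side is only known, by \cref{thm:interemediate-extension-of-local-system}, to be the intermediate extension of \emph{some} rank 4 local system on $\pi^{-1}(\bP^1\setminus D)\times(\bP^1\setminus D)$; an intermediate extension is simple only if the underlying local system is irreducible, and proving that this rank 4 local system is irreducible (indeed equal to $E\boxtimes E$) is essentially the content of the proposition. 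Moreover, a local system on a product is not determined by its restrictions to slices, so the "slice-level agreement" plus symmetry cannot by itself produce the non-zero morphism you need: a morphism of local systems on $U$ is a morphism of $\pi_1(U)$-representations, and the slices only control the restrictions to the fundamental groups of the factors.

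The missing ingredient, which is the linchpin of the paper's argument, is a K\"unneth-type decomposition: because $E$ is \emph{pure}, the rank 4 local system underlying $\heckeglobal_\alpha \Aut_E^0$ decomposes as a direct sum of external tensor products $\bigoplus_i F_i \boxtimes G_i$ of local systems on the two factors. Once this is available, the argument becomes a finite rank count: the symmetry of \cref{prop:hecke-transform-aute0-is-symmetric} forces the summands to come in swapped pairs $F_i\boxtimes G_i$, $G_i\boxtimes F_i$ unless $F_i\cong G_i$; the slice computation of \cref{lem:hecke-transform-of-aute0-has-one-correct-fiber} forces one summand of rank at least 2; a rank 3 summand would force total rank at least 6, and a rank 2 summand $F\boxtimes G$ with $F\not\cong G$ would force the stalk at $(k_x^{(1,0)},k_x^{(1,0)})$ to have rank 2, contradicting \cref{lem:stalks-of-intermediate-extension-of-e}. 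Hence there is a single summand $F\boxtimes F$ of rank 4, and one more slice comparison gives $F=E$. Your proposal never invokes purity to obtain the external decomposition, and without it the "exotic possibilities" you mention (a symmetric rank 4 local system on the product with the prescribed slices that is not an external square) are not excluded. A secondary slip: you apply \cref{lem:pullback-by-alpha-is-hecke-transform-restricted-to-e-tilde} with $F=\Aut_E^1$ to "restrict the LHS to $\BAut(\pEtilde)\times\barcohpar$", but $\heckeglobal\Aut_E^0$ lives on $\Bunpar^1\times\barcohpar$ while $\BAut(\pEtilde)\subset\Bunpar^2$, so that restriction is not the object you want; the lemma's role is to identify $(\alpha\times\id)^*$ of the LHS with the twice-iterated Hecke transform, which is how the symmetry statement is obtained in the first place.
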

\begin{proof}
  We denote
  by $\heckeglobal_\alpha \Aut_E^0$
  the complex on $\barcohpar \times \barcohpar$
  descended from the pullback
  $(\alpha \times \id)^* ((\heckeglobal \Aut_E^0)|_{\Bunrel 1 \times \barcohpar})$
  (as in \cref{prop:hecke-transform-aute0-is-symmetric}).
  Let $j \colon \bP^1 \setminus D \incl \barcohpar$ denote the inclusion.
  Because $\heckeglobal \Aut_E^0$ is the intermediate extension of a rank 4 local system on
  $\pi^{-1}(\bP^1 \setminus D) \times (\bP^1 \setminus D)$
  (\cref{thm:interemediate-extension-of-local-system})
  and
  because $E$ was assumed to be pure,
  there exist
  local systems $F_i, G_i$ on $\bP^1 \setminus D$
  such that
  \begin{equation}
    \label{eq:direct-summand-decomposition-in-proof-of-main-theorem}
    \heckeglobal_\alpha \Aut_E^0 =
    (j \times j)_{!*}
    \left(
      \bigoplus_{i} F_i \boxtimes G_i
    \right).
  \end{equation}
  Since the restriction of $\heckeglobal_\alpha \Aut_E^0$ to
  $(\barcohpar \times \barcohpar) \setminus \Delta^+$
  is symmetric,
  we can choose $F_i, G_i$ such that if $F_i$ is not isomorphic to $G_i$,
  then $G_i \boxtimes F_i$ is also one of the summands in the decomposition
  (\cref{eq:direct-summand-decomposition-in-proof-of-main-theorem}).

  Because the restriction of $\heckeglobal_\alpha \Aut_E^0$
  to $\barcohpar \times k_x^{(1,0)}$
  (for any $x \in D$)
  is isomorphic to (a M\"obius pullback of) $j_{!*} E$
  (up to shift and scalar; \cref{lem:hecke-transform-of-aute0-has-one-correct-fiber}),
  which is the intermediate extension of an irreducible local system,
  there is at least one direct summand $F \boxtimes G$
  in the decomposition of \ref{eq:direct-summand-decomposition-in-proof-of-main-theorem}
  of rank at least 2.
  If we can prove it has rank 4,
  then it is the only direct summand,
  and is therefore of the form $F \boxtimes F$.
  
  Suppose that $F \boxtimes G$ has rank 3.
  Then $F$ and $G$ are not isomorphic,
  so the rank 3 summand $G \boxtimes F$ also appears in
  the direct sum
  \cref{eq:direct-summand-decomposition-in-proof-of-main-theorem},
  but this leads to the contradiction
  that the restriction of $\heckeglobal_\alpha \Aut_E^0$
  to $(\bP^1 \setminus D) \times (\bP^1 \setminus D)$,
  which is a local system of rank 4,
  contains a rank 6 local system  $(F \boxtimes G) \oplus (G \boxtimes F)$.

  Suppose that $F \boxtimes G$ has rank 2.
  Then again $F$ and $G$ are not isomorphic,
  so $G \boxtimes F$ is one of the other summands
  in the decomposition,
  and in fact the only other summand.
  But this would imply that
  $(\heckeglobal_\alpha \Aut_E^0)|_{(k_x^{(1,0)}, k_x^{(1,0)})}$
  has rank 2,
  in contradiction 
  to \cref{lem:hecke-transform-of-aute0-has-one-correct-fiber}
  and the fact that $(j_{!*} E)|_{k_x^{(1,0)}}$ has rank 1 for all $x \in D$
  (\cref{lem:stalks-of-intermediate-extension-of-e}).

  We conclude that there exists an irreducible local system $F$ of rank 2 such that
  $\heckeglobal_\alpha \Aut_E^0$ is isomorphic to
  $(j \times j)_{!*}(F \boxtimes F)$.
  The restriction to
  $\barcohpar \times \{k_\infty^{(1,0)}\}$
  is therefore
  $j_{!*} F \otimes (j_{!*} F)_{k_\infty^{(1,0)}}$;
  but by our calculations
  (\cref{lem:hecke-transform-of-aute0-has-one-correct-fiber}),
  this is also equal to $j_{!*} E \otimes (j_{!*} E)|_{k_\infty^{(1,0)}}$.
  We therefore conclude $F = E$,
  which completes the proof.
 \end{proof}

 \begin{thm}
   \label{thm:aute-is-indeed-hecke-eigensheaf}
   Let $E$ be an irreducible pure rank 2 local system on $\bP^1 \setminus D$.
   Then there is an isomorphism
   \begin{equation*}
     \heckeglobal \Aut_E \isom \Aut_E \boxtimes j_{!*}E[1].
   \end{equation*}
 \end{thm}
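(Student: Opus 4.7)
The plan is to reduce \cref{thm:aute-is-indeed-hecke-eigensheaf} to its degree-zero case (\cref{prop:aut-e-is-eigensheaf-in-degree-0}) by transporting the eigensheaf property across the connected components $\Bunpar^d$ via the elementary Hecke operator $T_\infty$. The key ingredient is a commutativity of $\heckeglobal$ with $T_\infty^*$.

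To set up this commutativity, I would lift $T_\infty$ to an automorphism $\tilde T_\infty \colon \cH \isom \cH$ sending an exact sequence $0 \to \pF \to \pE \to \pT \to 0$ to the sequence $0 \to T_\infty \pF \to T_\infty \pE \to T_\infty \pT \to 0$. Proper base change along the Cartesian square induced by this lift yields
\begin{equation*}
\heckeglobal \circ T_\infty^{*} \;\cong\; \bigl(T_\infty \times T_\infty^{\barcohpar}\bigr)^{\!*} \circ \heckeglobal,
\end{equation*}
where $T_\infty^{\barcohpar}$ is the automorphism of $\barcohpar$ induced by $T_\infty$. Concretely, $T_\infty^{\barcohpar}$ is the identity on $\Supp^{-1}(\bP^1 \setminus \{\infty\})$ and merely permutes $k_\infty^{(1,0)}$ with $k_\infty^{(0,1)}$ on $\Supp^{-1}(\infty)$. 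Since by \cref{lem:stalks-of-intermediate-extension-of-e} the stalks of $j_{!*}E$ at $k_\infty^{(1,0)}$ and $k_\infty^{(0,1)}$ agree, one checks $\bigl(T_\infty^{\barcohpar}\bigr)^{\!*} j_{!*}E \cong j_{!*}E$.

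From \cref{defn:aut-e} one extracts the identity $\Aut_E^d \cong (T_\infty^d)^{*}\Aut_E^0 \otimes \Ecoeff^{\otimes -d}$. Applying $\heckeglobal$, using the commutativity above and the invariance of $j_{!*}E$ under $T_\infty^{\barcohpar}$, and invoking \cref{prop:aut-e-is-eigensheaf-in-degree-0}, we obtain
\begin{align*}
\heckeglobal \Aut_E^d
&\cong \bigl(T_\infty^d \times T_\infty^{\barcohpar}\bigr)^{\!*} \bigl(\Aut_E^1 \boxtimes j_{!*} E[1]\bigr) \otimes \Ecoeff^{\otimes -d} \\
&\cong \bigl((T_\infty^d)^{*}\Aut_E^1 \otimes \Ecoeff^{\otimes -d}\bigr) \boxtimes j_{!*} E[1] \\
&\cong \Aut_E^{d+1} \boxtimes j_{!*} E[1].
\end{align*}
Both sides vanish outside $\Bunrel{d+1} \times \barcohpar$ (by \cref{prop:aut-e-0-vanishes-outside-relevant-locus} together with its $T_\infty$-shifts), so these isomorphisms glue over $d \in \bZ$ into the required $\heckeglobal \Aut_E \cong \Aut_E \boxtimes j_{!*}E[1]$.

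The main obstacle is the commutativity of $\heckeglobal$ with $T_\infty^{*}$, together with the invariance of $j_{!*}E$ under $T_\infty^{\barcohpar}$. The former is a formal proper-base-change argument once the lift $\tilde T_\infty$ is written down, and the latter reduces to the stalk calculations already performed in \cref{sec:cohomological-properties-of-e}; neither is expected to be deep, but both require careful bookkeeping about how $T_\infty$ interacts with torsion sheaves supported at $\infty$.
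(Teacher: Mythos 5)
Your proposal is correct and follows essentially the same route as the paper: both proofs lift $T_\infty$ to the Hecke stack, use the resulting commutative diagram (base change) to obtain $\heckeglobal \circ T_\infty^* \cong (T_\infty \times T_\infty)^* \circ \heckeglobal$, observe that $j_{!*}E$ is invariant under the induced automorphism of $\barcohpar$ because that automorphism is the identity on $\bP^1\setminus D$, and then bootstrap from \cref{prop:aut-e-is-eigensheaf-in-degree-0} through all degrees via \cref{defn:aut-e}. The only cosmetic difference is that you justify $(T_\infty^{\barcohpar})^*j_{!*}E\cong j_{!*}E$ by the stalk computation, whereas it follows immediately from the intermediate-extension property.
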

 \begin{proof}
   Consider the commutative diagram
   \begin{equation*}
     \begin{tikzcd}
      \Bunpar^2 \times \barcohpar \arrow{d}{(T_\infty, T_\infty)}
      & \cH^2 \ar[l, "p_2", swap] \ar[r, "q_2"] \arrow{d}{T_\infty}
      & \Bunpar^1 \arrow{d}{T_\infty} \\
      \Bunpar^1 \times \barcohpar
      & \cH^1 \ar[l, "p_1", swap] \ar[r, "q_1"]
      & \Bunpar^0
    \end{tikzcd}
  \end{equation*}
  Pulling back and pushing forward $\Aut_E^0$ along the various maps gives us
  \begin{equation*}
    \begin{tikzcd}
      T_{\infty}^* (\Aut_E^1) \boxtimes T_\infty^* (j_{!*} E)[-1]
      & \Aut_E^1 \otimes E|_\infty
      \\
      \Aut_E^1 \boxtimes j_{!*} E[-1] \arrow[mapsto]{u}{(T_\infty, T_\infty)^*}
      & \Aut_E^0 \arrow[mapsto]{l}{\bR p_{1,!} q_1^*}  \arrow[mapsto]{u}{T_\infty^*}
    \end{tikzcd}
  \end{equation*}
  where $\bR p_{1,!} q_1^* \Aut_E^0 = \heckeglobal[-2] \Aut_E^0 = \Aut_E^1 \boxtimes j_{!*} E[-1]$
  by \cref{prop:aut-e-is-eigensheaf-in-degree-0}.
  Because $T_\infty \colon \barcohpar \to \barcohpar$ is the identity on $\bP^1 \setminus D$,
  we have $T_\infty^* (j_{!*} E) = j_{!*} E$
  and therefore
  \begin{equation*}
      T_{\infty}^* (\Aut_E^1) \boxtimes T_\infty^* (j_{!*} E)[-1]
      = (\Aut_E^2 \otimes \Ecoeff) \boxtimes j_{!*} E[-1]
  \end{equation*}
  The commutativity of the diagram hence implies
  \begin{equation*}
    \heckeglobal \Aut_E^1
    = \bR p_{2,!} q_{2}^* \Aut_E^0[2]
    = \Aut_E^2 \boxtimes j_{!*} E.
  \end{equation*}
  We can repeat this argument in the other degrees to conclude.
\end{proof}


\bibliographystyle{alpha}
\bibliography{general-bibliography}

\end{document}